\DeclareMathAlphabet{\mathfr}{U}{euf}{m}{n}
\newtheorem{thm}{Theorem}[section]
\newtheorem{defi}[thm]{Definition}
\newtheorem{coro}[thm]{Corollary}
\newtheorem{prop}[thm]{Proposition}
\newtheorem{rmk}[thm]{Remark}
\numberwithin{equation}{section}
\newcommand{\Q}{\mathbb Q}
\newcommand{\Qp}{{\mathbb Q}_p}
\newcommand{\Zp}{{\mathbb Z}_p}
\newcommand{\Z}{\mathbb Z}
\newcommand{\Gal}{\mathrm{Gal}}
\newcommand{\OL}{{\mathcal O}_L}
\newcommand{\ROE}{{\mathcal O}_E}
\newcommand{\End}{\operatorname{End}}
\title[Module structure of dihedral degree $2p$ extensions of $\Qp$]{Hopf Galois module structure of dihedral degree $2p$ extensions of $\Qp$}
\author{Daniel Gil-Muñoz, Anna Rio}
\date{\today}
\begin{document}
\maketitle

\begin{abstract}
\medskip
Let $p$ be an odd prime. For field extensions $L/\Qp$ with Galois group isomorphic to the dihedral group $D_{2p}$ of order $2p$, we consider the problem of computing a basis of the associated order in each Hopf Galois structure and the module structure of the ring of integers $\mathcal{O}_L$. We solve the case in which $L/\mathbb{Q}_p$ is not totally ramified and present a practical method which provides a complete answer for the cases $p=3$ and $p=5$. We see that within this family of dihedral extensions, the ring of integers 
is always free
over the associated orders in the different Hopf Galois structures.
\end{abstract}

\section{Introduction}

A finite extension of fields $L/K$ is said to be Hopf Galois if there is a $K$-Hopf algebra $H$ and a $K$-linear action 
$\langle\  ,\  \rangle:H\otimes_KL\to L$ which endows $L$ with an $H$-module algebra structure $H\to \End_K(L)$, such that the linear map $j\colon L\otimes_K H\longrightarrow\End_K(L)$ is an isomorphism. In that case, the pair formed by the Hopf algebra and the Hopf action is said to be a Hopf Galois structure of $L/K$. This notion generalizes the one of Galois extension: if $L/K$ is Galois with group $G$, the pair formed by the group algebra $K[G]$ and its Galois action on $L$ provides a Hopf Galois structure of $L/K$.

The Greither-Pareigis theorem  (see \cite{greitherpareigis} and \cite[Theorem 6.8]{childs}) characterizes all Hopf Galois structures of a separable degree $n$ extension $L/K$. Let us call  $\widetilde{L}$ the normal closure of $L$, $G=\Gal(\widetilde{L}/K)$, $G'=\Gal(\widetilde{L}/L)$, $X=G/G'$ and $\lambda:G\to \mathrm{Perm}(X)$ the left action on cosets. 

\begin{thm}[Greither-Pareigis] Hopf Galois structures of $L/K$ are in one-to-one correspondence with regular subgroups of $\mathrm{Perm}(X)$ normalized by $\lambda(G)$. Moreover, if $N$ is some such subgroup, the corresponding Hopf Galois structure is given by the $K$-Hopf algebra $\widetilde{L}[N]^G$ and its action over $L$ defined by $$\left\langle \sum_{i=1}^rc_in_i, x\right\rangle=\sum_{i=1}^rc_in_i^{-1}(\,\overline{1_G}\,)(x)$$
\end{thm}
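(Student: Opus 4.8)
The plan is to prove the statement by Galois descent, reducing the classification of Hopf Galois structures over $K$ to a purely combinatorial classification over the normal closure $\widetilde{L}$, where the extension splits completely, and then transporting the answer back down to $K$ by tracking the residual $G$-action.

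First I would base change everything along $K\to\widetilde{L}$. Since $L/K$ is separable, writing $L=K[t]/(f)$ and factoring $f$ over $\widetilde{L}$ identifies $\widetilde{L}\otimes_K L$ with the split \'etale algebra $\mathrm{Map}(X,\widetilde{L})$, where the set $X=G/G'$ indexes the $K$-embeddings $L\hookrightarrow\widetilde{L}$ (the coset $\sigma G'$ corresponding to $\sigma|_L$); the residual $G$-action permutes the factors through $\lambda$ and acts on scalars through $\Gal(\widetilde{L}/K)$, so it is semilinear. A $K$-Hopf algebra $H$ acting on $L$ base changes to an $\widetilde{L}$-Hopf algebra $\widetilde{H}=\widetilde{L}\otimes_K H$ acting on $\mathrm{Map}(X,\widetilde{L})$, and because base change is faithfully flat the structure map $j$ is an isomorphism over $K$ if and only if its base change is an isomorphism over $\widetilde{L}$. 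A dimension count against the isomorphism $j$ already forces $\dim_K H=\dim_K L=|X|$. Thus $H$ defines a Hopf Galois structure on $L/K$ precisely when $\widetilde{H}$ defines one on $\mathrm{Map}(X,\widetilde{L})/\widetilde{L}$.

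Next I would classify the Hopf Galois structures of the split algebra $\mathrm{Map}(X,\widetilde{L})$ over $\widetilde{L}$, claiming they correspond bijectively to regular subgroups $N\subseteq\mathrm{Perm}(X)$, the Hopf algebra being the group algebra $\widetilde{L}[N]$ acting through the permutation action $(n\cdot\phi)(y)=\phi(n^{-1}y)$. The content here is the core combinatorial lemma: a Hopf Galois structure on a split separable $\widetilde{L}$-algebra forces the acting Hopf algebra to be a group algebra $\widetilde{L}[N]$ with $|N|=|X|$, and the bijectivity of $j$ is equivalent to $N$ acting simply transitively, i.e. regularly, on $X$; conversely every regular $N$ manifestly yields such a structure. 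I expect this split classification to be the main obstacle, since proving that the acting Hopf algebra is forced to be a group algebra (by analyzing its comultiplication on the split base) and matching the regularity of $N$ with the Galois condition on $j$ is the genuine structural input, whereas the subsequent descent is formal.

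Finally I would descend. The semilinear $G$-action on $\mathrm{Map}(X,\widetilde{L})$ acts on $\mathrm{Perm}(X)$ by conjugation through $\lambda(G)$, hence on the collection of candidate subgroups $N$. By Galois descent for Hopf algebras and their module-algebra structures, the $\widetilde{L}$-datum attached to $N$ arises from a $K$-structure exactly when its descent datum is $G$-stable, which happens precisely when $N$ is carried to itself under this conjugation, i.e. when $\lambda(G)$ normalizes $N$; the descended Hopf algebra is then the fixed ring $H=\widetilde{L}[N]^G$. Unwinding the permutation action at the base point $\overline{1_G}\in X$, where the inversion $n\mapsto n^{-1}$ enters through the contravariance of the action on functions (equivalently the antipode of $\widetilde{L}[N]$), recovers the displayed formula $\langle\sum_i c_i n_i,x\rangle=\sum_i c_i\, n_i^{-1}(\overline{1_G})(x)$, where a coset $\overline{\sigma}$ acts on $x\in L=\widetilde{L}^{G'}$ by $\sigma$. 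The remaining care in this step is purely bookkeeping: one must check that the equivalence furnished by descent carries the Hopf algebra axioms, the module-algebra axioms, and the isomorphism property of $j$ simultaneously, so that the normalization condition is genuinely equivalent to descendability.
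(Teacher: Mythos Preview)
The paper does not prove this theorem at all; it is quoted as a known result with references to the original Greither--Pareigis paper and to Childs' monograph (see the sentence ``The Greither-Pareigis theorem (see \cite{greitherpareigis} and \cite[Theorem 6.8]{childs})'' immediately preceding the statement). Your sketch is essentially the standard Galois-descent argument found in those references: base change to $\widetilde{L}$, classify Hopf Galois structures on the split \'etale algebra $\mathrm{Map}(X,\widetilde{L})$ as regular permutation groups, then descend using the normalization condition. So there is nothing in the paper to compare against, and your outline matches the canonical proof; the only real work, as you correctly flag, is the lemma that any Hopf Galois structure on a split separable algebra is a group algebra of a regular permutation group, which is the substantive part of \cite{greitherpareigis}.
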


The type of a Hopf Galois structure of $L/K$ is defined as the isomorphism class of $N$ as an abstract group. If the extension 
$L/K$ is Galois the classical structure corresponds to $N=\rho(G)$, the centralizer of $\lambda(G)$ in $\mathrm{Perm}(G)$, $H=K[G]$ and the classical Galois action of $G$ in $L$ extended by linearity. If $G$ is non-commutative we have at least one other Hopf Galois structure of the same type, corresponding to $N=\lambda(G)$, which is called the canonical non-classical structure.

For $L/K$ a Galois extension of $p$-adic fields with Galois group $G$, starting from H. Leopoldt \cite{leop} the ring of integers 
$\OL$ is studied as a module over its associated order $\mathfrak{A}_{K[G]}=\{h\in K[G]: h\OL\subseteq \OL\}$. 
The main question that arises is to determine if $\OL$ is free as  $\mathfrak{A}_{K[G]}$-module. In the context of Hopf Galois theory the question generalizes in a natural way, namely we want to determine if $\OL$ is free as  $\mathfrak{A}_{H}$-module with 
$$\mathfrak{A}_H=\{h\in H  : h\OL\subseteq \OL\}=\{h\in H  :\  \langle h,x\rangle \in \OL\quad \forall x\in\OL\},$$ where $H$ is an arbitrary Hopf Galois structure of $L/K$. 
The classical Galois action is just one of the different Hopf actions that we can have on the field $L$. We will denote the associated order in the classical Galois structure by $\mathfrak{A}_{L/K}$. 
 
In this paper we consider extensions $L/\Qp$ with Galois group $G$ isomorphic to the dihedral group $D_{2p}$ of order $2p$.
In this case Hopf Galois structures can be only of type $D_{2p}$ (dihedral) or $C_{2p}$ (cyclic), and we know from \cite[Theorem 6.2]{byottpq} that there are two Hopf Galois structures of dihedral type and $p$ of cyclic type. The structures of dihedral type are the classical Galois structure and the canonical non-classical one. As for the cyclic type, it is a split $C_p\times C_2$ type for the dihedral group $D_{2p}=C_p\rtimes C_2$ and we know from \cite{cresporiovela} that all such structures are induced, namely obtained from Hopf Galois structures of disjoint subextensions.

For these extensions, we aim to study the problem of determining the freeness of $\mathcal{O}_L$ as module over the associated order $\mathfrak{A}_{L/K}$ in a Hopf Galois structure. This problem is actually solved for Hopf Galois structures of dihedral type. Indeed, if $L/K$ is not totally ramified, then its inertia group is cyclic and \cite[Corollaire after Théoréme 3]{berge2} gives that $\mathcal{O}_L$ is $\mathfrak{A}_{L/K}$-free. The remainder of the cases are covered by \cite[Proposition 7]{berge}. Moreover, Truman proved that $\mathcal{O}_L$ is free over its associated order in the classical Galois structure if and only if so is over the one in the canonical non-classical Hopf Galois structure (see \cite[Theorem 1.1]{truman}). Therefore, $\mathcal{O}_L$ is free over its associated order in the Hopf Galois structures of dihedral type.

We shall study the module structure of $\mathcal{O}_L$ over its associated order in Hopf Galois structures of cyclic type. The main tools are the techniques introduced in \cite{gilrio}, which are reformulated in Section \ref{secredmethod}. The idea is that if one knows explicitly how a given Hopf Galois structure $H$ acts on an integral basis of $L$, then one can compute explicitly the generalized module index $[\mathcal{O}_L:\langle\mathfrak{A}_H,\beta\rangle]_{\mathcal{O}_K}$ (see Proposition \ref{criteriafreeness} and the proceeding remark). In Section \ref{sechgstructures} we describe the Hopf Galois structures on a dihedral degree $2p$ extension $L/K$, and in Section \ref{secdihedral} we give details about the arithmetic for the case $K=\mathbb{Q}_p$. Namely, $L/\mathbb{Q}_p$ has a generating polynomial which is among a class of defining polynomials discovered by Amano, which we subsequently call Amano polynomials, and we use this fact to determine the discriminant and the chain of ramification groups.

To determine the Hopf Galois module structure of the ring of integers for the ones of cyclic type, we use that those Hopf Galois structures are induced. In Section \ref{secnontotcase} we use some general results concerning the relation between freeness and tensored Hopf Galois structures to obtain the following:

\begin{thm} Let $L/\Qp$ be a non-totally ramified dihedral degree $2p$ extension of $p$-adic fields. 
\begin{itemize}
    \item[1.] If $E/\Qp$ is a degree $p$ subextension of $L/\Qp$ and $H_1$ is its unique Hopf Galois structure, then $\ROE$ is $\mathfrak{A}_{H_1}$-free.
    \item[2.] If $H$ is a Hopf Galois structure of $L/\Qp$, then $\mathcal{O}_L$ is $\mathfrak{A}_H$-free.
\end{itemize}
\end{thm}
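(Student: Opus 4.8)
The plan is to split the problem according to the type of the Hopf Galois structure and to reduce everything to arithmetic facts about cyclic degree $p$ extensions and a single base-change lemma. I would begin by fixing the ramification picture. Since $L/\Qp$ is dihedral and not totally ramified, its inertia group $I\trianglelefteq D_{2p}$ is a proper normal subgroup; as a degree $2p$ unramified extension of $\Qp$ would be cyclic, $I\neq 1$, so $I=C_p$. Hence $e(L/\Qp)=p$, $f(L/\Qp)=2$, the inertia field $T:=L^{C_p}$ is the unramified quadratic extension of $\Qp$, and $L/T$ is totally ramified cyclic of degree $p$. For the degree $p$ subextension $E=L^{\langle s\rangle}$ attached to an order-two subgroup $\langle s\rangle$, the coprimality $I\cap\langle s\rangle=1$ shows that $L/E$ is unramified, so $f(E/\Qp)=1$ and $E/\Qp$ is totally ramified of degree $p$. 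In particular $E$ and $T$ are linearly disjoint over $\Qp$ with $E\otimes_{\Qp}T=L$, and $T/\Qp$ is tame.

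For part 1 the key move is unramified base change along $T$. Because $T/\Qp$ is unramified we have $\ROE\otimes_{\Zp}\mathcal{O}_T=\mathcal{O}_L$, and the unique structure $H_1$ of type $C_p$ splits after this base change: since $\Gal(T/\Qp)=C_2$ acts on the underlying group $C_p$ by inversion, the form $H_1$ is trivialised over $T$, giving $H_1\otimes_{\Qp}T\cong T[C_p]$, which is the classical structure of the Galois extension $L/T$. I would then show that unramified base change commutes with the formation of associated orders, so that $\mathfrak{A}_{H_1}\otimes_{\Zp}\mathcal{O}_T\cong\mathfrak{A}_{L/T}$, and use faithfully flat descent of freeness (legitimate here because $\mathfrak{A}_{H_1}$ is semilocal over the complete ring $\Zp$, so that finitely generated projectives of the correct rank are free) to conclude that $\ROE$ is $\mathfrak{A}_{H_1}$-free if and only if $\mathcal{O}_L$ is $\mathfrak{A}_{L/T}$-free. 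The latter holds: $L/T$ is cyclic of degree $p$ over a field of absolute ramification index $1$ not containing $\zeta_p$, so its ramification break is forced to be minimal, and being totally ramified with cyclic inertia it is covered by \cite[Proposition 7]{berge}.

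Part 2 then follows formally. For the two dihedral-type structures, freeness is exactly the statement recalled in the introduction: since $I=C_p$ is cyclic and $L/\Qp$ is not totally ramified, $\mathcal{O}_L$ is free over $\mathfrak{A}_{L/\Qp}$ by \cite{berge2}, and Truman's transfer \cite[Theorem 1.1]{truman} gives freeness for the canonical non-classical structure as well. For the $p$ structures of cyclic type I would use that they are induced \cite{cresporiovela}: relative to $L=E\otimes_{\Qp}T$ each is a tensor product $H_1\otimes_{\Qp}H_2$ of $H_1$ on $E/\Qp$ with the classical structure $H_2$ on the tame extension $T/\Qp$. The tensored-Hopf-Galois freeness criterion gives $\mathfrak{A}_{H_1\otimes H_2}=\mathfrak{A}_{H_1}\otimes_{\Zp}\mathfrak{A}_{H_2}$ and reduces freeness of $\mathcal{O}_L$ over $\mathfrak{A}_{H_1\otimes H_2}$ to freeness of $\ROE$ over $\mathfrak{A}_{H_1}$, which is part 1, together with freeness of $\mathcal{O}_T$ over $\mathfrak{A}_{H_2}$, which is automatic since $T/\Qp$ is tame and hence $\mathcal{O}_T$ is $\mathcal{O}_{\Qp}[C_2]$-free by Noether.

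The step I expect to be the main obstacle is making the base-change and descent machinery precise: establishing that forming the associated order commutes with the unramified base change $T/\Qp$, that freeness over these possibly non-maximal orders descends faithfully flatly, and that the form $H_1$ satisfies $H_1\otimes_{\Qp}T\cong T[C_p]$. Once these general facts are in place the arithmetic input is light, since the reduction lands on cyclic degree $p$ extensions with a forced minimal ramification break, already handled by Bergé; the whole difficulty is therefore concentrated in the tensored-Hopf-Galois descent results rather than in any explicit computation.
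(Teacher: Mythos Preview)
Your proposal is correct and follows essentially the same route as the paper. Both arguments pivot on the cyclic degree $p$ extension $L/F$ (your $L/T$), establish freeness of $\mathcal{O}_L$ over $\mathfrak{A}_{L/F}$ there, descend along the unramified quadratic base change to get $\mathcal{O}_E$ free over $\mathfrak{A}_{H_1}$, and then invoke the tensored-structure results from \cite{gilrio} to push freeness up to the induced cyclic structures on $L/\Qp$; the dihedral-type structures are handled in both cases by Berg\'e and Truman as in the introduction.

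The only differences are in packaging. For the freeness of $\mathcal{O}_L$ over $\mathfrak{A}_{L/F}$, the paper applies the Bertrandias--Bertrandias--Ferton criterion explicitly (checking $t=1$ divides $p-1$, or the continued-fraction condition when relevant), whereas you argue the break must be $1$ and cite \cite[Proposition 7]{berge}; the BBF reference is the more standard one for cyclic degree $p$ extensions, so you may wish to swap that citation. For the descent step, the paper follows Lettl \cite{lettl} and appeals to Krull--Schmidt--Azumaya over the complete base $\Zp$, while you phrase it as faithfully flat descent of projectivity plus the fact that projectives of constant rank over a commutative semilocal ring are free (this works because $H_1$, being a fixed subalgebra of $L[C_p]$, is commutative). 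Both formulations are valid and yield the same conclusion; your identification of this descent step as the crux of the argument matches the paper exactly.
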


For the rest of the cases, we need the techniques of \cite{gilrio}. In Section \ref{secppart} we study the case of a degree $p$ subextension $E/\mathbb{Q}_p$ of a dihedral degree $2p$ extension. We present some considerations that arise from applying the reduction method to those Hopf Galois structures, which in turn gives a procedure which is feasible for $p\in\{3,5\}$. We will obtain the following:

\begin{thm} Let $p\in\{3,5\}$. If $E/\mathbb{Q}_p$ is a separable degree $p$ extension with dihedral degree $2p$ normal closure, then $\mathcal{O}_E$ is free over the associated order of its unique Hopf Galois structure. Moreover, one can find explicitly a basis of that associated order.
\end{thm}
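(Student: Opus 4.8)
The plan is to apply the reduction method of Section \ref{secredmethod} (following \cite{gilrio}) to the unique Hopf Galois structure $H_1$ of $E/\Qp$. First I would pin down the arithmetic. Since $[E:\Qp]=p$ but the normal closure $\widetilde{E}/\Qp$ is strictly larger, $E/\Qp$ is not Galois; in particular it cannot be the unramified degree $p$ extension (which would be cyclic and equal to its own normal closure), so, as $p$ is the residue characteristic, $E/\Qp$ is forced to be totally, wildly ramified. When $\widetilde{E}/\Qp$ itself is not totally ramified, freeness of $\ROE$ over $\mathfrak{A}_{H_1}$ is already supplied by Section \ref{secnontotcase}, so the substantive content is the case of totally ramified normal closure. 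There I would take $E=\Qp(\pi)$ with $\pi$ a root of an Amano polynomial of degree $p$, and from the discriminant and ramification filtration computed in Section \ref{secdihedral} record an explicit $\Zp$-basis $\theta_0,\dots,\theta_{p-1}$ of $\ROE$ adapted to the valuation.

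Next I would make the action of $H_1$ completely explicit. By the Greither--Pareigis description of Section \ref{sechgstructures}, $H_1=\widetilde{E}[N]^G$ for the unique regular subgroup $N\cong C_p$ of $\mathrm{Perm}(G/G')\cong S_p$ normalized by $\lambda(G)$; choosing a $\Qp$-basis $w_0,\dots,w_{p-1}$ of $H_1$ and expanding each $\langle w_i,\theta_j\rangle$ in the $\{\theta_j\}$ gives a $p\times p$ action matrix with entries in $\Qp$. The associated order is then $\mathfrak{A}_{H_1}=\{\sum_i c_i w_i : \langle\sum_i c_i w_i,\theta_j\rangle\in\ROE \text{ for all } j\}$, which is cut out by finitely many congruences and hence computed as an explicit $\Zp$-lattice. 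Producing a $\Zp$-basis of this lattice already settles the second assertion of the theorem.

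Finally I would establish freeness through Proposition \ref{criteriafreeness}. For a candidate generator $\beta\in\ROE$ one forms the module $\langle\mathfrak{A}_{H_1},\beta\rangle$ and computes the generalized module index $[\ROE:\langle\mathfrak{A}_{H_1},\beta\rangle]_{\Zp}$; by the criterion, $\ROE$ is $\mathfrak{A}_{H_1}$-free exactly when this index can be realized as the unit ideal, which I would verify by exhibiting a suitable $\beta$. Because all the lattices in play are determined by congruences modulo a fixed power of $p$, both the search for $\beta$ and the index computation reduce to a finite verification for each fixed $p$.

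The main obstacle is that this finite verification is genuinely computational and grows rapidly with $p$: the action matrices are $p\times p$, and the number of parameter families of Amano polynomials — hence the number of cases to treat — increases with $p$, so the argument does not transparently generalize. The technical heart of the proof is therefore organizing these cases and reducing each to congruences modulo an explicit power of $p$, which is what makes the computation terminate; this is feasible for $p\in\{3,5\}$ but is not carried out uniformly in $p$.
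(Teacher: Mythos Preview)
Your plan is essentially the paper's own approach: apply the reduction method of Section~\ref{secredmethod} to the explicit basis of $H_1$ from Proposition~\ref{propbasisppart}, compute $M(H_1,E)$ in the power basis $\{1,\alpha,\dots,\alpha^{p-1}\}$, extract the Hermite normal form to get a $\Zp$-basis of $\mathfrak{A}_{H_1}$, and then test candidate generators via Proposition~\ref{criteriafreeness}. Two implementation points you have not anticipated are worth flagging. First, the paper organizes the computation of $\langle w_i,\alpha^j\rangle$ by factoring $f(x)=(x-\alpha)\prod_i P_i(x)$ into quadratics over $E$ and expressing the action through the Lucas sequences $U_j(A_i,B_i)$, $V_j(A_i,B_i)$ attached to each $P_i$ (Corollary~\ref{corogramppart}); this is what makes ``expanding each $\langle w_i,\theta_j\rangle$'' concretely executable rather than a black box. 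Second, for $p=5$ the Amano polynomials do \emph{not} factor into quadratics over $\Q(\alpha)$---the $A_i,B_i$ are genuinely $5$-adic---so the paper replaces each $f$ by a global polynomial (pulled from the LMFDB) defining the same $5$-adic field; without this substitution your proposed symbolic computation would stall. Finally, your appeal to Section~\ref{secnontotcase} for the non-totally-ramified case covers freeness but not the ``Moreover'' clause: Corollary~\ref{corofreenessnontot} gives no explicit basis of $\mathfrak{A}_{H_1}$, which is why the paper still runs the full computation in the singular case.
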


As for dihedral degree $2p$ extensions themselves, in Section \ref{sechopfcyclic} we glue the results obtained from their quadratic and degree $p$ subextensions to present a procedure to study the associated order of a Hopf Galois structure of cyclic type, as well as the module structure of $\mathcal{O}_L$. In this case, we will prove the following:

\begin{thm} Let $p\in\{3,5\}$. If $L/\Qp$ is a dihedral degree $2p$ extensions of $p$-adic fields and $H$ is a Hopf Galois structure on $L/\Qp$ of cyclic type then $\mathcal{O}_L$ is free over the associated order in $\mathfrak{A}_H$. Moreover, one can find explicitly a basis of that associated order.
\end{thm}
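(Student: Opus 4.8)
The plan is to use that a cyclic type structure $H$ on $L/\Qp$ is \emph{induced} from the two disjoint proper subextensions attached to the decomposition $D_{2p}=C_p\rtimes C_2$. Write $F=L^{C_p}$ for the quadratic subextension (the fixed field of the normal $C_p$) and $E=L^{C_2}$ for a degree $p$ subextension (the fixed field of a non-normal $C_2$, so that $E/\Qp$ is not Galois and has normal closure $L$); then $L=EF$ and $E\cap F=\Qp$. By \cite{cresporiovela} the structure $H$ is of the form $H_1\otimes_{\Qp}H_2$, where $H_1$ is the unique Hopf Galois structure of $E/\Qp$ and $H_2$ is the classical structure of the quadratic extension $F/\Qp$. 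Since $p$ is odd, $F/\Qp$ is tame, so $\mathfrak{A}_{H_2}=\Zp[\Gal(F/\Qp)]$ and $\mathcal{O}_F$ is $\mathfrak{A}_{H_2}$-free by Noether's theorem; and $\mathcal{O}_E$ is $\mathfrak{A}_{H_1}$-free with an explicit free generator by the theorem on degree $p$ subextensions. The goal is to transport these two freeness results up to $L$.

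When $L/\Qp$ is not totally ramified the statement is covered by the theorem on non-totally ramified extensions, and the mechanism is transparent: there $e_{L/\Qp}\in\{1,2,p\}$, so at least one of $E$, $F$ is unramified and hence has unit discriminant. Consequently $\disc(E/\Qp)$ and $\disc(F/\Qp)$ are coprime, which yields both $\mathcal{O}_L=\mathcal{O}_E\otimes_{\Zp}\mathcal{O}_F$ and $\mathfrak{A}_H=\mathfrak{A}_{H_1}\otimes_{\Zp}\mathfrak{A}_{H_2}$; a free generator of $\mathcal{O}_L$ is then simply the product of the free generators of $\mathcal{O}_E$ and $\mathcal{O}_F$.

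The remaining, and genuinely harder, case is $L/\Qp$ totally ramified, where $E/\Qp$ and $F/\Qp$ are both ramified and the coprime-discriminant argument breaks down. Indeed, a uniformizer of $E$ has $L$-valuation $2$ and a uniformizer of $F$ has $L$-valuation $p$, so the $\Zp$-span of the monomials $\pi_E^i\pi_F^j$ realizes only the valuations in $\{2i+pj:i,j\ge 0\}$ and misses every odd $L$-valuation below $p$; hence $\mathcal{O}_E\mathcal{O}_F$ is a proper suborder of $\mathcal{O}_L$ of nontrivial index, and neither $\mathcal{O}_L$ nor $\mathfrak{A}_H$ factors as a tensor product. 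Here I would invoke the Amano polynomial model of Section \ref{secdihedral} to fix an explicit integral basis of $L$ and the chain of ramification groups, write down the action of the induced structure $H$ on this basis, and from it compute $\mathfrak{A}_H$ directly by the reduction method of Section \ref{secredmethod}.

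Freeness is then settled by Proposition \ref{criteriafreeness}: $\mathcal{O}_L$ is $\mathfrak{A}_H$-free on a candidate generator $\beta$ exactly when the generalized module index $[\mathcal{O}_L:\langle\mathfrak{A}_H,\beta\rangle]_{\Zp}$ is trivial. Taking $\beta$ built from the free generators of $\mathcal{O}_E$ and $\mathcal{O}_F$ and adjusted so as to reach the missing valuations, I would compute this index by gluing the contributions already known for the quadratic and degree $p$ subextensions together with the correction coming from the non-maximality index $[\mathcal{O}_L:\mathcal{O}_E\mathcal{O}_F]_{\Zp}$. I expect this last step to be the main obstacle: the matrix bookkeeping for the generalized module index grows with $p$ and does not obviously collapse, so controlling it requires the explicit finite calculation over the residue field provided by the method of \cite{gilrio}. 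This is precisely why the result is established only for $p\in\{3,5\}$; verifying that the index is a unit in those two cases simultaneously proves freeness and produces the explicit basis of $\mathfrak{A}_H$ asserted in the statement.
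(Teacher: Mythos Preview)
Your overall architecture matches the paper: split into the non-totally ramified case (handled by arithmetic disjointness and the tensor decomposition $\mathfrak{A}_H=\mathfrak{A}_{H_1}\otimes_{\Zp}\mathfrak{A}_{H_2}$) and the totally ramified case (handled by direct computation via the reduction method with the uniformizer basis $\{\gamma^i\}$, $\gamma=z/\alpha^{(p-1)/2}$). Up to and including the computation of a basis of $\mathfrak{A}_H$ in the totally ramified case, your outline agrees with what the paper does.

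The gap is in your freeness step for the totally ramified case. You propose to take $\beta$ ``built from the free generators of $\mathcal{O}_E$ and $\mathcal{O}_F$ and adjusted'' and to compute the index by ``gluing the contributions already known for the quadratic and degree $p$ subextensions together with the correction coming from $[\mathcal{O}_L:\mathcal{O}_E\mathcal{O}_F]_{\Zp}$''. The paper carries out exactly the experiment you suggest and finds that it \emph{fails}: for every totally ramified $L/\Qp$ with $p\in\{3,5\}$, if $\epsilon$ generates $\mathcal{O}_E$ over $\mathfrak{A}_{H_1}$ and $\delta$ generates $\mathcal{O}_F$ over $\mathfrak{A}_{H_2}$, then $\beta'=\epsilon\delta$ has $v_p(D_{\beta'}(H,L))>I(H,L)$, so $\beta'$ is never an $\mathfrak{A}_H$-generator of $\mathcal{O}_L$ (Theorems \ref{teofreenessdih6} and \ref{teofreenessdih10}). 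No ``gluing-plus-correction'' formula is established or used; the non-maximality of $\mathcal{O}_E\mathcal{O}_F$ in $\mathcal{O}_L$ is precisely why such a formula would be delicate, and the paper does not attempt one.

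What the paper actually does is abandon the subextension generators entirely at this point. It computes $D_\beta(H,L)$ as an explicit polynomial in the coordinates of $\beta=\sum_{i=1}^{2p}\beta_i\gamma^{i-1}$ and then exhibits a concrete $\beta$---for instance $\beta=\gamma+\gamma^4$ for the second group when $p=3$, and $\beta=\sum_{i=0}^{9}\gamma^i$ for both totally ramified cases when $p=5$---for which $v_p(D_\beta(H,L))=I(H,L)$. So the missing idea in your proposal is that the free generator must be sought directly in the $\gamma$-power basis of $\mathcal{O}_L$, not lifted from the factors; the verification is a case-by-case evaluation of $D_\beta$, not a structural gluing argument.
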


Most of the computations in this paper are shown in the line of the development of the contents. The ones that are sophisticated enough have been carried out with Maple, and some of them are considerably bulky and shown in the Appendix \ref{appendix}.

\section{The reduction method revisited}\label{secredmethod}

The main tool in this paper consists in the set of techniques introduced in the paper \cite{gilrio}, where the authors established a general method to obtain a basis of the associated order in every Hopf Galois structure and provided a necessary and sufficient condition to determine whether or not the ring of integers is free over the associated order. In this section we summarize the concepts and results obtained in that article, and we also introduce new notions that will be useful for our purposes.

The standard situation throughout the paper is the following: 
\begin{itemize}
    \item $K$ is the fraction field of a principal ideal domain $\mathcal{O}_K$.
    \item $L$ is a finite separable field extension of $K$.
    \item $\OL$ is the integral closure of $\mathcal{O}_K$ in $L$.
\end{itemize}

\subsection{The matrix of the action}\label{firstsubsection}

The key to determine the associated order $\mathfrak{A}_H$ is to study the Hopf action $$\langle\ ,\ \rangle\colon H\otimes_K L\longrightarrow L$$ as $K$-linear map. To this end, we fix $K$-bases $W=\{w_i\}_{i=1}^n$ of $H$ and $B=\{\gamma_j\}_{j=1}^n$ of $L$.

We consider the matrix $G(H_W,L_B)=(\langle w_i,\gamma_j\rangle)_{i,j=1}^n\in\mathcal{M}_n(L)$, which we call \textit{the Gram matrix of the action}. This is the matrix that in \cite{gilrio} we represented by a table in the various examples throughout the paper. If $B'$ is another $K$-basis of $L$, it is easy to check that $$G(H_W,L_{B'})=G(H_W,L_B)P_B^{B'},$$where $P_B^{B'}$ is the change of basis 
matrix.

On the other hand, \textit{the matrix of the action of $H$ on $L$} is defined as the matrix $M(H_W,L_B)$ whose columns are the entries of the matrices representing $w_i$ by means of the representation $\rho_H\colon H\longrightarrow\mathrm{End}_K(L)$ (see \cite[Definition 3.1]{gilrio}). In other words, $M(H_W,L_B)$ is the matrix of the linear map $\rho_H$ where in $H$ we consider the $K$-basis $W$ and in $\mathrm{End}_K(L)$ we consider the $K$-basis $\Phi=\{\varphi_i\}_{i=1}^{n^2}$ defined as follows: For every $1\leq i\leq n^2$, there are $1\leq k,j\leq n$ such that $i=k+(j-1)n$. Then, let $\varphi_i$ be the map that sends $\gamma_j$ to $\gamma_k$ and the other $\gamma_l$ to $0$.

Explicitely, if $\langle w_i,\gamma_j\rangle=\sum_{k=1}^n m_{ij}^{(k)}\gamma_k$ with $m_{ij}^{(k)}\in K$, then $m_{ij}^{(k)}$ is the entry corresponding to the $(n(j-1)+k)$-th row and $i$-th column. Then, the matrix of the action can be written as $$M(H_W,L_B)=\begin{pmatrix}
\\[-1ex]
M_1(H_W,L_B) \\[1ex] 
\hline\\[-1ex]
\cdots \\[1ex]
\hline \\[-1ex]
M_{n}(H_W,L_B)\\
\\[-1ex]
\end{pmatrix},$$ where $M_j(H_W,L_B)=(m_{ij}^{(k)})_{k,j=1}^n$ is the matrix of the linear map 
$\langle\ , \gamma_j\rangle : H  \longrightarrow  L $ and will be called \textit{the $j$-th block of the matrix of the action} in the sequel. There is a formula for a change of basis of the $H$ in terms of these blocks: if $W'$ is another $K$-basis of $H$, then $$M_j(H_{W'},L_B)=M_j(H_W,L_B)P_W^{W'}.$$

The matrix $M(H_W,L_B)$ can be obtained from $G(H_W,L_B)$ as follows. 
Given $1\leq j\leq n$, we consider the $j$-th column $C_j$ of $G(H_W,L_B)$. 
Then, $M_j(H,L)$ is the matrix arising 
from taking as $i$-th column the coordinates of 
the $i$-th component of $C_j$ with respect to the basis $B$.

When the bases of $H$ and $L$ are implicit in the context, we simply write $M(H,L)$ for the matrix of the action.

\subsection{Determining a basis of $\mathfrak{A}_H$}

Let $L/K$ be an $H$-Galois extension of fields such that $K$ is the fraction field of a PID $\mathcal{O}_K$. In this section we present the steps to carry out the reduction method and obtain a basis of the associated order in $H$.

\begin{prop} Assume that $B$ is an $\mathcal{O}_K$-basis of $\mathcal{O}_L$ (i.e, an integral basis of $L$). Let $h=\sum_{i=1}^{n}h_iw_i\in H$, $h_i\in K$. Then, $$h\in\mathfrak{A}_H \iff  M(H,L)\begin{pmatrix}h_1 \\ \vdots \\ h_{n}\end{pmatrix}\in\mathcal{O}_K^{n^2}.$$
\end{prop}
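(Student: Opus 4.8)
The plan is to unwind both the definition of the associated order and the definition of the matrix $M(H,L)$ until the two sides of the claimed equivalence become literally the same family of integrality conditions. The only structural input that does any work is the hypothesis that $B=\{\gamma_j\}_{j=1}^n$ is an \emph{integral} basis, i.e. $\mathcal{O}_L=\bigoplus_{j=1}^n\mathcal{O}_K\gamma_j$; everything else is bookkeeping of indices. First I would reduce the condition $h\in\mathfrak{A}_H$ to testing the action on the basis elements. By definition $h\in\mathfrak{A}_H$ means $\langle h,x\rangle\in\mathcal{O}_L$ for every $x\in\mathcal{O}_L$. Writing an arbitrary $x=\sum_j a_j\gamma_j$ with $a_j\in\mathcal{O}_K$ and using that $\langle h,-\rangle=\rho_H(h)$ is $K$-linear, we get $\langle h,x\rangle=\sum_j a_j\langle h,\gamma_j\rangle$, so the containment for all $x\in\mathcal{O}_L$ is equivalent to $\langle h,\gamma_j\rangle\in\mathcal{O}_L$ for each $j$. (The forward direction is immediate since $\gamma_j\in\mathcal{O}_L$; the backward direction uses that $\mathcal{O}_L$ is an $\mathcal{O}_K$-module.)

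Next I would expand each $\langle h,\gamma_j\rangle$ in the basis $B$ and translate integrality into a coordinatewise statement. Using $h=\sum_{i=1}^n h_iw_i$ and the definition $\langle w_i,\gamma_j\rangle=\sum_{k=1}^n m_{ij}^{(k)}\gamma_k$ from Subsection~\ref{firstsubsection}, linearity gives
$$\langle h,\gamma_j\rangle=\sum_{i=1}^n h_i\langle w_i,\gamma_j\rangle=\sum_{k=1}^n\Bigl(\sum_{i=1}^n m_{ij}^{(k)}h_i\Bigr)\gamma_k.$$
Here the inner coefficients $\sum_i m_{ij}^{(k)}h_i$ lie in $K$. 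Because $B$ is an integral basis, an element of $L$ with coordinates in $K$ lies in $\mathcal{O}_L$ if and only if all of those coordinates lie in $\mathcal{O}_K$; this is exactly the place where the integral-basis hypothesis is used in its second, decisive, form. Hence $\langle h,\gamma_j\rangle\in\mathcal{O}_L$ is equivalent to $\sum_{i=1}^n m_{ij}^{(k)}h_i\in\mathcal{O}_K$ for all $k=1,\dots,n$.

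Finally I would match these scalars with the entries of the matrix product. By the explicit description of $M(H,L)=M(H_W,L_B)$, the scalar $m_{ij}^{(k)}$ sits in the $i$-th column and the $(n(j-1)+k)$-th row, so the $(n(j-1)+k)$-th coordinate of the column vector $M(H,L)(h_1,\dots,h_n)^{\mathsf T}$ is precisely $\sum_{i=1}^n m_{ij}^{(k)}h_i$. As $j$ and $k$ range over $\{1,\dots,n\}$ the index $n(j-1)+k$ ranges bijectively over $\{1,\dots,n^2\}$, so the full list of conditions ``$\sum_i m_{ij}^{(k)}h_i\in\mathcal{O}_K$ for all $j,k$'' is exactly the single condition $M(H,L)(h_1,\dots,h_n)^{\mathsf T}\in\mathcal{O}_K^{n^2}$. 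Chaining the three equivalences yields the claim. I do not anticipate a genuine obstacle; the one point demanding care is the index correspondence $i\mapsto(j,k)$ via $i=n(j-1)+k$ so that no coordinate is double-counted or omitted, and making explicit that the integral-basis assumption is what licenses testing $\mathcal{O}_L$-membership coordinate by coordinate in both directions.
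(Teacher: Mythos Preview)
Your argument is correct: it is the natural unwinding of the definitions, and the one nontrivial point---that membership in $\mathcal{O}_L$ can be tested coordinatewise in $B$---is exactly where the integral-basis hypothesis enters, as you emphasize. The paper does not give its own proof here but simply cites \cite[Theorem 3.3]{gilrio}; your write-up is precisely the direct verification one expects that reference to contain, so there is nothing further to compare.
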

\begin{proof}
See \cite[Theorem 3.3]{gilrio}.
\end{proof}

To compute a basis of $\mathfrak{A}_H$, we reduce $M(H,L)$ in such a way that the property of the previous proposition is preserved. It is possible to accomplish this because of the Bézout property, which at the same time can be performed because $\mathcal{O}_K$ is a PID. That is:

\begin{thm} There are matrices $U\in\mathrm{GL}_{n^2}(\mathcal{O}_K)$ and $D\in\mathrm{GL}_n(K)$ such that 
$$
U\, M(H,L)=\begin{pmatrix}
D\\ \hline \\[-2ex] O
\end{pmatrix}.
$$
\end{thm}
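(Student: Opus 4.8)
The plan is to reduce the claim to the existence of a Hermite normal form for full-column-rank matrices over the PID $\mathcal{O}_K$, the only subtlety being that the entries of $M(H,L)$ lie in $K$ rather than in $\mathcal{O}_K$.

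First I would record that $M(H,L)$ has full column rank $n$ over $K$. Indeed, $M(H,L)$ is the matrix of the representation $\rho_H\colon H\to\End_K(L)$ with respect to the bases $W$ and $\Phi$, and the defining property of a Hopf Galois structure, namely that $j\colon L\otimes_K H\to\End_K(L)$ is a $K$-isomorphism, forces $\rho_H$ to be injective (it factors as $h\mapsto 1_L\otimes h\mapsto j(1_L\otimes h)$, and both maps are injective). Hence the $n$ columns of $M(H,L)$ are $K$-linearly independent and its rank equals $n$; this is what will ultimately guarantee that the block $D$ is invertible.

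Next I would clear denominators: since $M(H,L)$ has finitely many entries in $K=\mathrm{Frac}(\mathcal{O}_K)$, there is a nonzero $d\in\mathcal{O}_K$ with $A:=d\,M(H,L)\in\mathcal{M}_{n^2\times n}(\mathcal{O}_K)$, and $A$ still has rank $n$. Now I would apply the Hermite reduction to $A$, processing the columns from left to right. At the step for column $j$ I take the gcd $g$ of the entries of that column lying in the not-yet-pivoted rows $j,\dots,n^2$; writing $g=\sum c_i a_i$ by the Bézout property of the PID $\mathcal{O}_K$, I can assemble (as a product of $2\times 2$ Bézout transformations) an elementary factor in $\mathrm{GL}_{n^2}(\mathcal{O}_K)$ that places $g$ in row $j$ and annihilates the entries of column $j$ in all lower rows. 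Because $A$ has full column rank, none of these gcd's vanishes, so after $n$ such steps I obtain $U\in\mathrm{GL}_{n^2}(\mathcal{O}_K)$ (the product of the elementary factors) with
$$
U\,A=\begin{pmatrix} T\\ O\end{pmatrix},
$$
where $T\in\mathcal{M}_n(\mathcal{O}_K)$ is upper triangular with nonzero diagonal entries (the successive pivots) and $O$ is the $(n^2-n)\times n$ zero block.

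Finally I would divide out $d$: setting $D:=d^{-1}T\in\mathcal{M}_n(K)$ gives $U\,M(H,L)=d^{-1}U A=\begin{pmatrix}D\\ O\end{pmatrix}$. Since $\det T$ is a nonzero element of $\mathcal{O}_K$, we have $\det D=d^{-n}\det T\neq 0$, so $D\in\mathrm{GL}_n(K)$, as required. The main work is the Bézout-based column-clearing step, i.e. verifying that the gcd of a finite list in $\mathcal{O}_K$ can be realized by an $\mathcal{O}_K$-invertible row transformation; this is precisely where the PID (equivalently, Bézout) hypothesis on $\mathcal{O}_K$ is used, as anticipated in the paragraph preceding the statement.
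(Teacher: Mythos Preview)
Your proof is correct and follows essentially the same route as the paper: both reduce the statement to Hermite-type row reduction over the PID $\mathcal{O}_K$ via the B\'ezout identity, with the paper simply citing Kaplansky for the reduction step. Your version is more explicit in two respects that the paper handles only implicitly or elsewhere---the full-column-rank argument for $M(H,L)$ (ensuring $D\in\mathrm{GL}_n(K)$) and the denominator-clearing step (which the paper defers to Section~\ref{subsecthermite})---but the underlying approach is the same.
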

\begin{proof}
Since $\mathcal{O}_K$ is a PID, every pair of elements in $\mathcal{O}_K$ satisfies the Bézout identity. Then, 
we can apply general methods of matrix reduction, and even assume that $D$ is upper triangular (see \cite[Theorem 3.5]{kaplansky}).
\end{proof}

The matrix $D$ of the previous statement will be called \textit{a reduced matrix} of $M(H,L)$.
Such a matrix provides a basis of the associated order.

\begin{thm}\label{teobasisassocorder} Let $D$ be a reduced matrix of $M(H,L)$ and call $D^{-1}=(d_{ij})_{i,j=1}^n$. Then, the elements $$v_i=\sum_{l=1}^{n}d_{li}w_l,\quad 0\leq i\leq n-1$$ form an $\mathcal{O}_K$-basis of $\mathfrak{A}_H$.
\end{thm}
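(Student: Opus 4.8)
The plan is to prove Theorem~\ref{teobasisassocorder} by combining the reduction statement (the Bézout/matrix-reduction theorem giving $U\,M(H,L) = \left(\begin{smallmatrix} D \\ O \end{smallmatrix}\right)$) with the membership criterion of the proposition: $h = \sum_i h_i w_i \in \mathfrak{A}_H$ if and only if $M(H,L)(h_1,\dots,h_n)^{\mathsf{t}} \in \mathcal{O}_K^{n^2}$. First I would observe that since $U \in \mathrm{GL}_{n^2}(\mathcal{O}_K)$, multiplication by $U$ is an $\mathcal{O}_K$-linear automorphism of $\mathcal{O}_K^{n^2}$; hence for any column vector $y \in K^{n^2}$ we have $y \in \mathcal{O}_K^{n^2}$ if and only if $Uy \in \mathcal{O}_K^{n^2}$. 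Applying this with $y = M(H,L)(h_1,\dots,h_n)^{\mathsf{t}}$ converts the criterion into the equivalent condition
$$
U\,M(H,L)\begin{pmatrix}h_1 \\ \vdots \\ h_n\end{pmatrix} = \begin{pmatrix} D \\ O \end{pmatrix}\begin{pmatrix}h_1 \\ \vdots \\ h_n\end{pmatrix} \in \mathcal{O}_K^{n^2}.
$$
Because the bottom block is zero, this reduces cleanly to the single requirement $D\,(h_1,\dots,h_n)^{\mathsf{t}} \in \mathcal{O}_K^{n}$; the lower $n^2 - n$ coordinates are automatically zero and impose no condition.

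Next I would translate this matrix condition into an explicit description of the coefficient vectors. Writing $\mathbf{h} = (h_1,\dots,h_n)^{\mathsf{t}}$, the condition $D\mathbf{h} \in \mathcal{O}_K^n$ says precisely that $\mathbf{h} \in D^{-1}\mathcal{O}_K^n$, using that $D \in \mathrm{GL}_n(K)$ is invertible over $K$. Thus the set of coefficient vectors of elements of $\mathfrak{A}_H$ is exactly the $\mathcal{O}_K$-lattice $D^{-1}\mathcal{O}_K^n$, which is free of rank $n$ with $\mathcal{O}_K$-basis given by the columns of $D^{-1}$. Unwinding the identification $\mathbf{h} \leftrightarrow \sum_l h_l w_l$, the $i$-th column of $D^{-1}$, namely $(d_{1i},\dots,d_{ni})^{\mathsf{t}}$, corresponds to the element $v_i = \sum_{l=1}^n d_{li} w_l$, and these $v_i$ form an $\mathcal{O}_K$-basis of $\mathfrak{A}_H$. (I would note the mild indexing discrepancy in the statement, which ranges $0 \le i \le n-1$; I would just take $i$ to run over the $n$ columns of $D^{-1}$.)

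The only genuine subtlety, and the step I would treat most carefully, is the passage between "$\mathbf{h}$ lies in a free $\mathcal{O}_K$-lattice with a given basis" and "the corresponding $v_i$ form an $\mathcal{O}_K$-basis of $\mathfrak{A}_H$." This requires that the $K$-linear coordinate map $H \to K^n$ sending $\sum_l h_l w_l \mapsto (h_1,\dots,h_n)^{\mathsf{t}}$ is an isomorphism carrying the $\mathcal{O}_K$-module $\mathfrak{A}_H$ isomorphically onto the lattice $D^{-1}\mathcal{O}_K^n$; this is immediate since $W$ is a $K$-basis of $H$, so the map is a $K$-linear bijection and in particular $\mathcal{O}_K$-linear, and it identifies $\mathfrak{A}_H$ with the set of admissible coefficient vectors computed above. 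Everything else is bookkeeping once the two cited results are invoked, so I do not expect any serious obstacle beyond keeping the change-of-basis roles of $U$ (integral, hence lattice-preserving) and $D$ (invertible over $K$, hence lattice-transforming) clearly separated.
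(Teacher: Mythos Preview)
Your proof is correct and follows the natural line of argument: use the unimodularity of $U$ to transport the integrality condition through the reduction, collapse the zero block, and read off the lattice $D^{-1}\mathcal{O}_K^n$ as the set of admissible coefficient vectors. The paper does not give an in-line proof here but simply cites \cite[Theorem 3.5]{gilrio}; your write-up is exactly the standard argument one would expect to find behind that citation, so there is nothing substantively different to compare.
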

\begin{proof}
See \cite[Theorem 3.5]{gilrio}.
\end{proof}

\subsection{The Hermite normal form over a PID}\label{subsecthermite}

A reduced matrix $D$ of $M(H,L)$ is not unique: the left multiplication of $D$ by any elementary matrix that preserves $\mathcal{O}_L$ is another reduced matrix of $M(H,L)$. But in practice we will take a particular reduced matrix: the Hermite normal form. Note that in general $M(H,L)$ does not have coefficients in $\mathcal{O}_K$. In that case, we can always write $M(H,L)=dM$ with $d\in K$ and $M\in\mathcal{M}_n(\mathcal{O}_K)$, and we define the Hermite normal form of $M(H,L)$ as $dH$, where $H$ is the Hermite normal form of $M$. Since the usual definition of Hermite normal form is for matrices with integer coefficients, we use the definition in \cite[Section 5.2]{adkinsweintraub}, which is valid for matrices with coefficients in a PID. Namely:

\begin{defi}\label{defihermite} Let $R$ be a PID, $P$ a complete set of non-associates in $R$, and for every $a\in R$, let $P(a)$ be a complete set of residues modulo $a$. Let $M=(m_{ij})\in\mathcal{M}_{m\times n}(R)$ be a non-zero matrix. We will say that $M$ is in Hermite normal form if there exists an integer $1\leq r\leq m$ such that:
\begin{itemize}
    \item[1.] Given $1\leq i\leq r$, the $i$-th row of $M$ is non-zero, and given $r+1\leq i\leq m$, the $i$-th row of $M$ is zero.
    \item[2.] There is a sequence of integer numbers $1\leq n_1<\cdots<n_r\leq m$ such that for every $1\leq i\leq r$:
    \begin{itemize}
        \item Given $j<n_i$, $m_{ij}=0$.
        \item $m_{i,n_i}\in P-\{0\}$.
        \item Given $1\leq j<i$, $m_{j,n_i}\in P(m_{i,n_i})$
    \end{itemize}
\end{itemize}
\end{defi}

The Hermite normal form of a matrix with coefficients in a PID always exists and is unique (\cite[Chapter 5, Theorems 2.9 and 2.13]{adkinsweintraub}). 
In this article we will work with $R=\Zp$. For this ring, $\{p^n\}_{n=1}^{\infty}$ is a complete set of non-associates. Thus, the Hermite normal form of $M(H,L)$ will be an upper triangular matrix such that:

\begin{itemize}
    \item The entries of the diagonal are a non-negative power of $p$. 
    \item The entries above an entry $1$ of the diagonal are $0$ and the entries above an entry $p^k$ are representatives of a coset mod $p^k$.
\end{itemize}

\subsection{Freeness over the associated order}\label{subsectfreeness}

Let $L/K$ be an $H$-Galois extension such that $K$ is the fraction field of a PID $\mathcal{O}_K$. For any $K$-bases $W$ of $H$ and $B$ of $L$, the matrix of the action associated to an element $\beta=\sum_{j=1}^n\beta_j\gamma_j\in L$ is defined as the matrix of $\langle\ , \beta\rangle : H  \longrightarrow  L $, namely
$$M_{\beta}(H_W,L_B)=\sum_{j=1}^n\beta_jM_j(H_W,L_B).$$

Assume that $B$ is an integral basis of $L$. Let $D$ be a reduced matrix of $M(H_W,L_B)$ and let $V$ be the $\mathcal{O}_K$-basis of $\mathfrak{A}_H$ provided by Theorem \ref{teobasisassocorder}. Since $\mathfrak{A}_H$ acts on $\mathcal{O}_L$, the matrix $M(H_V,L_B)$ has coefficients in $\mathcal{O}_K$. It is its invertibility as an element of $\mathcal{M}_n(\mathcal{O}_K)$ what determines whether $\beta$ is a free generator of $\mathcal{O}_L$ as $\mathfrak{A}_H$-module.

\begin{prop}\label{criteriafreeness} An element $\beta\in\mathcal{O}_L$ is an $\mathfrak{A}_H$-free generator of $\mathcal{O}_L$ if and only if the associated matrix $M_{\beta}(H_V,L_B)$ is unimodular (i.e. it belongs to $\mathrm{GL}_n(\mathcal{O}_K)$).
\end{prop}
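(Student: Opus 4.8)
The plan is to read the statement as a comparison of two free $\mathcal{O}_K$-modules of rank $n=[L:K]$ through the multiplication-by-$\beta$ map. First I would introduce the map $\phi_\beta\colon\mathfrak{A}_H\to\mathcal{O}_L$ given by $\phi_\beta(h)=\langle h,\beta\rangle$. Since $\beta\in\mathcal{O}_L$ and $\mathfrak{A}_H$ stabilizes $\mathcal{O}_L$ by definition, this map indeed lands in $\mathcal{O}_L$ and is $\mathcal{O}_K$-linear. By the very meaning of a free generator, $\beta$ is an $\mathfrak{A}_H$-free generator of $\mathcal{O}_L$ exactly when $\phi_\beta$ is an isomorphism of $\mathfrak{A}_H$-modules. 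The key observation is that $\phi_\beta$ is automatically $\mathfrak{A}_H$-linear: because the Hopf action endows $L$ with an $H$-module structure, one has $\langle h'h,\beta\rangle=\langle h',\langle h,\beta\rangle\rangle$, so $\phi_\beta(h'h)=h'\cdot\phi_\beta(h)$. Hence $\phi_\beta$ is an $\mathfrak{A}_H$-isomorphism if and only if it is bijective, equivalently an isomorphism of $\mathcal{O}_K$-modules.

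Next I would identify the matrix of $\phi_\beta$ with respect to the basis $V$ of $\mathfrak{A}_H$ and the integral basis $B$ of $\mathcal{O}_L$. By the construction of $M_j(H_V,L_B)$ as the matrix of $\langle\ ,\gamma_j\rangle$ and the definition $M_\beta(H_V,L_B)=\sum_{j=1}^n\beta_jM_j(H_V,L_B)$, the $i$-th column of $M_\beta(H_V,L_B)$ records the coordinates of $\sum_j\beta_j\langle v_i,\gamma_j\rangle=\langle v_i,\beta\rangle=\phi_\beta(v_i)$ in the basis $B$. Thus $M_\beta(H_V,L_B)$ is precisely the matrix of the linear map $\phi_\beta$ in these bases.

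Finally I would invoke the standard fact that an $\mathcal{O}_K$-linear map between two free $\mathcal{O}_K$-modules of the same finite rank $n$ is an isomorphism if and only if its matrix lies in $\mathrm{GL}_n(\mathcal{O}_K)$, equivalently has unit determinant; over the PID $\mathcal{O}_K$ this follows from the invariant factor (Smith normal form) description of the cokernel. Both freeness hypotheses are in hand: $\mathcal{O}_L$ is free of rank $n$ with basis $B$ by assumption, and $\mathfrak{A}_H$ is free of rank $n$ with basis $V$ by Theorem \ref{teobasisassocorder}. Chaining the three steps yields the claimed equivalence. I do not anticipate a genuine obstacle; the only points requiring care are confirming that $\phi_\beta$ is $\mathfrak{A}_H$-linear—so that $\mathcal{O}_K$-bijectivity upgrades to an $\mathfrak{A}_H$-module isomorphism—and noting that ``free generator'' encodes bijectivity rather than mere surjectivity of $\phi_\beta$, both of which are immediate once the module-action axioms are made explicit.
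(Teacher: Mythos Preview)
Your argument is correct and is the natural direct proof: identify $M_\beta(H_V,L_B)$ as the matrix of the $\mathcal{O}_K$-linear (and $\mathfrak{A}_H$-linear) map $\phi_\beta\colon h\mapsto\langle h,\beta\rangle$ between the free rank-$n$ $\mathcal{O}_K$-modules $\mathfrak{A}_H$ and $\mathcal{O}_L$, then use that such a map is an isomorphism iff its matrix is unimodular. The paper itself does not give a proof here but simply cites \cite[Proposition 4.2]{gilrio}, so there is no in-paper argument to compare against; your write-up is presumably what that reference contains.
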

\begin{proof}
See \cite[Proposition 4.2]{gilrio}.
\end{proof}

\begin{rmk}\normalfont Whenever $\beta$ is a primitive element of $L/K$, the determinant of the matrix $M_{\beta}(H_V,L_B)$ is actually the generalized module index $[\mathcal{O}_L:\langle\mathfrak{A}_H,\beta\rangle]_{\mathcal{O}_K}$ (or more accurately, it is the ideal generated by the determinant). Indeed, both $\mathcal{O}_L$ and $\langle\mathfrak{A}_H,\beta\rangle$ are $\mathcal{O}_K$-orders in $L$ and the canonical map $\varphi\colon\langle\mathfrak{A}_H,\beta\rangle\longrightarrow\mathcal{O}_L$ has matrix $M_{\beta}(H_V,L_B)$, where we fix the basis $\{\langle v_i,\beta\rangle\}_{i=1}^n$ in $\langle\mathfrak{A}_H,\beta\rangle$ and the basis $B$ in $\mathcal{O}_L$. Then, Proposition \ref{criteriafreeness} means that $\mathcal{O}_L=\langle\mathfrak{A}_H,\beta\rangle$ if and only if $[\mathcal{O}_L:\langle\mathfrak{A}_H,\beta\rangle]_{\mathcal{O}_K}$ is trivial.
\end{rmk}

Since $D=P_W^V$, using the change basis formula for each $M_j(H_W,L_B)$ in Section \ref{firstsubsection}, it is immediate that $$M_\beta(H_V,L_B)=M_\beta(H_W,L_B)D^{-1}.$$ Then, the criterion of the previous result is satisfied if and only if $$D_{\beta}(H_W,L_B)\in\mathrm{det}(D)\mathcal{O}_K^*,$$ where $D_{\beta}(H_W,L_B)=\mathrm{det}(M_{\beta}(H_W,L_B))$. Now, assume that $\mathcal{O}_K$ is a discrete valuation ring with valuation $v_K$. Then, the previous condition is equivalent to $$v_K(D_{\beta}(H_W,L_B))=v_K(\mathrm{det}(D)).$$

\begin{prop} The number $v_K(\mathrm{det}(D))$ does not depend on the reduced matrix chosen.
\end{prop}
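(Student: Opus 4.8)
The plan is to show that any two reduced matrices of $M(H,L)$ differ on the left by an invertible integral matrix whose top-left block has unit determinant, so that their determinants share the same $v_K$-valuation. Suppose $D$ and $D'$ are two reduced matrices, arising from $U,U'\in\GL_{n^2}(\mathcal{O}_K)$ with
$$U\,M(H,L)=\begin{pmatrix} D\\ O\end{pmatrix},\qquad U'\,M(H,L)=\begin{pmatrix} D'\\ O\end{pmatrix}.$$
Setting $Q=U'U^{-1}\in\GL_{n^2}(\mathcal{O}_K)$, one has $Q\binom{D}{O}=\binom{D'}{O}$.

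First I would write $Q$ in block form with an $n\times n$ top-left block $Q_{11}$, an $(n^2-n)\times n$ lower-left block $Q_{21}$, and so on, and then expand the product. Because $D\in\GL_n(K)$ is invertible, the vanishing of the bottom block of $Q\binom{D}{O}$ gives $Q_{21}D=O$ and hence $Q_{21}=O$; thus $Q$ is block upper triangular and $D'=Q_{11}D$.

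The key step is then to control $\det(Q_{11})$. Since $Q$ is block upper triangular, $\det(Q)=\det(Q_{11})\det(Q_{22})$, and as $Q\in\GL_{n^2}(\mathcal{O}_K)$ this determinant is a unit of $\mathcal{O}_K$, so $v_K(\det Q)=0$. Both $Q_{11}$ and $Q_{22}$ have entries in $\mathcal{O}_K$, whence $v_K(\det Q_{11})\ge 0$ and $v_K(\det Q_{22})\ge 0$; their sum being $0$ forces each to vanish, in particular $v_K(\det Q_{11})=0$. Taking determinants in $D'=Q_{11}D$ then yields $v_K(\det D')=v_K(\det Q_{11})+v_K(\det D)=v_K(\det D)$, which is the claimed independence.

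The only genuine obstacle is the passage from ``$\det Q$ is a unit'' to ``$\det Q_{11}$ is a unit'', and this is precisely where the invertibility of $D$ is essential: it is what kills $Q_{21}$ and makes $Q$ block triangular, since for a general element of $\GL_{n^2}(\mathcal{O}_K)$ the top-left block need not have unit determinant. Everything else is a routine block computation together with the fact that in a discrete valuation ring a product of two elements of non-negative valuation can have valuation zero only if both factors do.
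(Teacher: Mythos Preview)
Your proof is correct and, in fact, somewhat more direct than the one in the paper. The paper argues via the Hermite normal form: it fixes the canonical matrix $\begin{pmatrix}M\\O\end{pmatrix}$ as the HNF of $M(H,L)$, then shows that for \emph{every} reduced matrix $D$ the HNF of $D$ itself must equal $M$ (by extending the $n\times n$ reduction to an $n^2\times n^2$ unimodular matrix and invoking uniqueness of the HNF), whence $UD=M$ with $U$ unimodular and $v_K(\det D)=v_K(\det M)$ independently of $D$. Your route bypasses the HNF entirely: you compare two arbitrary reduced matrices through $Q=U'U^{-1}$, force $Q$ to be block upper triangular using the invertibility of $D$, and then use the additivity of $v_K$ on the block determinants. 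The paper's approach has the mild advantage of identifying a canonical value $v_K(\det M)$ attached to the HNF, which ties in with the later choice of the HNF as the preferred reduced matrix; your approach has the advantage of being self-contained and not relying on the existence or uniqueness theorems for the Hermite normal form over a PID.
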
  
\begin{proof}
Let us call $M(H,L)=M(H_W,L_B)$ for convenience.

Since $M(H,L)$ has rank $n$, its Hermite normal form is 
$
\begin{pmatrix}
M\\ \hline \\[-2ex] O
\end{pmatrix}
$ for a certain matrix $M\in GL_n(K)$ in echelon form. Let $D$ be a reduced matrix of $M(H,L)$. By definition of reduced matrix, the matrices $M(H,L)$ and $
\begin{pmatrix}
D\\ \hline \\[-2ex] O
\end{pmatrix}
$ are equal up to left multiplication by a unimodular matrix (left equivalent according to \cite[Definition 2.1]{adkinsweintraub}) and the Hermite normal form is unique (see \cite[Theorem 2.13]{adkinsweintraub}), so they have the same Hermite normal form $
\begin{pmatrix}
M\\ \hline \\[-2ex] O
\end{pmatrix}
$. 

We claim that $M$ is the Hermite normal form of $D$. Indeed, if $M'$ is the Hermite normal form of $D$, then there is a unimodular matrix $U\in\mathrm{GL}_n(\mathcal{O}_K)$ such that $UD=M'$. Then $\left(\begin{array}{c|c}
    U & 0 \\ \hline
    0 & I_{n^2-n}
\end{array}\right)\in\mathcal{M}_{n^2}(\mathcal{O}_K)$ is a unimodular matrix because its determinant is $\mathrm{det}(U)\in\mathcal{O}_K^*$, and satisfies $$\left(\begin{array}{c|c}
    U & O' \\ \hline
    O & I_{n^2-n}
\end{array}\right)\begin{pmatrix}
D \\ \hline \\[-2ex] O
\end{pmatrix}=\begin{pmatrix}
M' \\ \hline \\[-2ex] O
\end{pmatrix},$$ where $O'$ is the zero matrix in $\mathcal{M}_{n\times(n^2-n)}(K)$. By the uniqueness of the Hermite normal form, $
\begin{pmatrix}
M\\ \hline \\[-2ex] O
\end{pmatrix}=
\begin{pmatrix}
M'\\ \hline \\[-2ex] O
\end{pmatrix}
$, that is, $M=M'$. Moreover, we deduce that $UD=M$.

This proves that for every reduced matrix $D$ there is a unimodular matrix carrying $
\begin{pmatrix}
D\\ \hline \\[-2ex] O
\end{pmatrix}
$ to $
\begin{pmatrix}
M\\ \hline \\[-2ex] O
\end{pmatrix}
$ that has its first $n\times n$ block $U$ unimodular, and the equality $UD=M$ holds for this block. Then $v_K(\mathrm{det}(D))=v_K(\mathrm{det}(M))$ for every reduced matrix $D$.
\end{proof}

This leads to the following definition:

\begin{defi} Let $W$ be a $K$-basis of $H$. The index of the Hopf Galois structure $(H,\langle,\rangle)$ is defined as $$I_W(H,L)=v_K(\mathrm{det}(D)),$$ where $D$ is a reduced matrix of $M(H_W,L)$.
\end{defi}

If $W'$ is another $K$-basis of $H$, then $$I_{W'}(H,L)=I_W(H,L)+v_K(\mathrm{det}(P_W^{W'})).$$ With this notation, we can rewrite Proposition \ref{criteriafreeness} in a more convenient way, without requiring the basis $V$:

\begin{prop} An element $\beta\in\mathcal{O}_L$ is an $\mathfrak{A}_H$-free generator of $\mathcal{O}_L$ if and only if $$v_K(\mathrm{det}(M_{\beta}(H_W,L_B)))=I_W(H,L).$$
\end{prop}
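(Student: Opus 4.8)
The plan is to trace the definitions and the change-of-basis formulas already assembled in this section, so that the statement becomes a restatement of Proposition~\ref{criteriafreeness} after the basis $V$ is eliminated in favour of the more convenient basis $W$. First I would recall that by Theorem~\ref{teobasisassocorder} the reduced matrix $D$ satisfies $D = P_W^V$, so that the change-of-basis formula $M_j(H_V,L_B) = M_j(H_W,L_B)P_W^V$ for each block yields, upon summing against the coordinates $\beta_j$ of $\beta$, the identity $M_\beta(H_V,L_B) = M_\beta(H_W,L_B)\,D^{-1}$ recorded just after the remark. Taking determinants gives
\[
\det\bigl(M_\beta(H_V,L_B)\bigr) = \frac{D_\beta(H_W,L_B)}{\det(D)}.
\]

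Next I would invoke Proposition~\ref{criteriafreeness}: $\beta$ is an $\mathfrak{A}_H$-free generator of $\mathcal{O}_L$ if and only if $M_\beta(H_V,L_B) \in \mathrm{GL}_n(\mathcal{O}_K)$, equivalently if and only if $\det(M_\beta(H_V,L_B)) \in \mathcal{O}_K^*$. Here I must use that $M(H_V,L_B)$ already has coefficients in $\mathcal{O}_K$ (noted at the start of Section~\ref{subsectfreeness}, since $\mathfrak{A}_H$ acts on $\mathcal{O}_L$), so unimodularity is detected purely by the unit condition on the determinant. Feeding in the determinant formula above, this unit condition reads $D_\beta(H_W,L_B) \in \det(D)\,\mathcal{O}_K^*$, exactly the intermediate criterion displayed in the text.

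Finally, since $\mathcal{O}_K$ is a discrete valuation ring with valuation $v_K$, the membership $D_\beta(H_W,L_B) \in \det(D)\,\mathcal{O}_K^*$ is equivalent to the equality of valuations $v_K(D_\beta(H_W,L_B)) = v_K(\det(D))$. By definition $v_K(\det(D)) = I_W(H,L)$, and the preceding proposition guarantees this value is independent of the reduced matrix chosen, so the quantity $I_W(H,L)$ is well defined and the stated criterion $v_K(\det(M_\beta(H_W,L_B))) = I_W(H,L)$ follows. I expect no genuine obstacle here: the result is a formal consequence of assembling the change-of-basis identity, Proposition~\ref{criteriafreeness}, and the well-definedness of the index. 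The only point requiring care is the bookkeeping that $M(H_V,L_B)$ is integral, which is what lets us replace the unimodularity condition by a single valuation equality rather than a more delicate condition on the whole matrix; everything else is substitution into formulas already established in this section.
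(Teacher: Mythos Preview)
Your proposal is correct and follows essentially the same approach as the paper: the proposition is presented there as a direct reformulation of Proposition~\ref{criteriafreeness}, obtained exactly by the chain you describe (the change-of-basis identity $M_\beta(H_V,L_B)=M_\beta(H_W,L_B)D^{-1}$, the unimodularity criterion, the passage to valuations via the DVR hypothesis, and the definition of $I_W(H,L)$). There is nothing to add.
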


Note that this new condition does not depend of the basis of $H$. Indeed, if $W'$ is another $K$-basis of $H$, then $M_{\beta}(H_{W'},L_B)=M_{\beta}(H_W,L_B)P_W^{W'}$, so $$v_K(\mathrm{det}(M_{\beta}(H_{W'},L_B)))=\mathrm{det}(M_{\beta}(H_W,L_B)))+v_K(\mathrm{det}(P_W^{W'})),$$ and joining this with the equality of indexes above, we obtain that $$v_K(\mathrm{det}(M_{\beta}(H_W,L_B)))=I_W(H,L) \iff v_K(\mathrm{det}(M_{\beta}(H_{W'},L_B)))=I_{W'}(H,L)$$

In the case $K=\Q_p$, we will choose $D$ to be the Hermite normal form of $M(H,L)$ (over $\Zp$), and then $I_W(H,L)$ will be the number of $p$'s that appear in the diagonal of $D$ (with multiplicities).

\subsection{An example: Quadratic extensions}\label{sectquadr}

Let $L/K$ be a degree $2$ separable extension such that $K$ is the fraction field of a PID $\mathcal{O}_K$ and $\mathrm{char}(K)\neq2$. Then $L/K$ is Galois with Galois group of the form $G=\{1,\sigma\}$, and $L=K(z)$ with $\sigma(z)=-z$. Moreover, $H=K[G]$ is the unique Hopf Galois structure of $L/K$. In \cite[Example 3.7]{gilrio} we applied the reduction method to compute a basis of the associated order $\mathfrak{A}_{H}$, with the $K$-basis $\{\mathrm{Id},\sigma\}$ of $H$ and the basis $\{1,z\}$ of $L$. Let us assume that the last basis is integral; in particular $z\in\mathcal{O}_L$. The Gram matrix arising from this choice is $$G(H,L)=\begin{pmatrix}
1 & z \\
1 & -z
\end{pmatrix},$$ and the computation of $M(H,L)$ follows easily. We obtained the reduced matrix $D=\begin{pmatrix}
1 & 1 \\
0 & 2 
\end{pmatrix}$ which gives the basis $\left\lbrace\mathrm{Id},\frac{-\mathrm{Id}+\sigma}{2}\right\rbrace.$ The matrix $D$ is the Hermite normal form of $M(H,L)$ when we consider coefficients in $\mathbb{Z}$. If we take instead a field $K$ such that $2$ is invertible in $\mathcal{O}_K$ (for instance, $K=\mathbb{Q}_p$ with $p\geq3$ prime), the Hermite normal form turns out to be the identity matrix. This gives the $\mathcal{O}_K$-basis $\left\lbrace\mathrm{Id},\sigma\right\rbrace$ of $\mathfrak{A}_{H}$. It follows then that $\mathfrak{A}_H=\mathcal{O}_K[G]$.

Regarding the freeness of $\mathcal{O}_L$ over $\mathfrak{A}_{H}$, for $\delta=\delta_1+\delta_2z$, we have $$M_{\delta}(H,L)=\delta_1
\begin{pmatrix}
1 & 1 \\
0 & 0
\end{pmatrix}+
\delta_2
\begin{pmatrix}
0 & 0 \\
1 & -1
\end{pmatrix}=
\begin{pmatrix}
\delta_1 & \delta_1 \\ 
\delta_2 & -\delta_2
\end{pmatrix},$$ with determinant $-2\delta_1\delta_2$. Then, this determinant is invertible in $\mathcal{O}_K$ if and only if so is $\delta_1\delta_2$. Thus, $\mathcal{O}_L$ is $\mathfrak{A}_H$-free and every $\delta=\delta_1+\delta_2z$ with $\delta_1\delta_2\in\mathcal{O}_L^*$ is a generator (and no other element of $\mathcal{O}_L$ is). For example, $\delta=1+z$ is such a free generator.

We summarize the results obtained in the following:

\begin{thm}\label{teoquadreasy}
Let $L/K$ be a quadratic extension such that $\mathrm{char}(K)\neq2$ and $2\in\mathcal{O}_K^*$, and let $G$ be its Galois group. Assume that there is $z\in L$ with $z\notin K$ and $z^2\in K$ such that $\{1,z\}$ is an integral basis of $L$. Then, $\mathfrak{A}_{L/K}=\mathcal{O}_K[G]$. Moreover, the elements that generate $\mathcal{O}_L$ as $\mathfrak{A}_{L/K}$-module are the elements $\delta=\delta_1+\delta_2z\in\mathcal{O}_L$ such that $\delta_1\delta_2\in\mathcal{O}_L^*$.
\end{thm}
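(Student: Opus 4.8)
The plan is to run the reduction method of Section \ref{secredmethod} for the unique Hopf Galois structure $H=K[G]$, $G=\{1,\sigma\}$, working with the $K$-basis $W=\{\Id,\sigma\}$ of $H$ and the integral basis $B=\{1,z\}$ of $L$. The hypothesis $\mathrm{char}(K)\neq2$ guarantees $\sigma(z)=-z\neq z$, so $z$ is a genuine primitive element and $B$ is indeed a $K$-basis. First I would write down the Gram matrix $G(H_W,L_B)$ by evaluating the Galois action on $B$, obtaining the matrix with rows $(1,z)$ and $(1,-z)$, and from it read off the two blocks $M_1,M_2$ of the matrix of the action $M(H,L)$ by converting the columns of the Gram matrix into coordinate vectors with respect to $B$, as prescribed in Subsection \ref{firstsubsection}. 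This stacks into a $4\times2$ matrix whose only nonzero rows are $(1,1)$ and $(1,-1)$.

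Next I would compute a reduced matrix of $M(H,L)$ via its Hermite normal form (Subsection \ref{subsecthermite}). Over a general base the integer span of the rows $(1,1)$ and $(1,-1)$ reduces to $(1,1)$ and $(0,2)$, giving the reduced matrix with those rows; this is exactly where the hypothesis $2\in\mathcal{O}_K^*$ becomes decisive. When $2$ is a unit the entry $2$ is invertible, so the Hermite normal form collapses to the identity matrix, i.e. $D=I_2$. Feeding $D^{-1}=I_2$ into Theorem \ref{teobasisassocorder} returns the basis $\{\Id,\sigma\}=W$ of $\mathfrak{A}_{L/K}$, whence $\mathfrak{A}_{L/K}=\mathcal{O}_K[G]$.

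For the freeness statement I would invoke Proposition \ref{criteriafreeness}. Because the reduced matrix is the identity, the basis $V$ of the associated order coincides with $W$, so $M_\delta(H_V,L_B)=M_\delta(H_W,L_B)=\delta_1M_1+\delta_2M_2$ for $\delta=\delta_1+\delta_2z$. Since $B$ is an integral basis, $\delta\in\mathcal{O}_L$ forces $\delta_1,\delta_2\in\mathcal{O}_K$, so this matrix automatically lies in $\mathcal{M}_2(\mathcal{O}_K)$ and unimodularity reduces to its determinant being a unit. A direct expansion gives $\det M_\delta(H_W,L_B)=-2\delta_1\delta_2$; as $2$ and $-1$ are units, this is a unit precisely when $\delta_1\delta_2\in\mathcal{O}_K^*$, equivalently $\delta_1\delta_2\in\mathcal{O}_L^*$. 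This yields simultaneously the freeness of $\mathcal{O}_L$ over $\mathfrak{A}_{L/K}$ and the exact description of its free generators.

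The computations are entirely routine, so there is no serious obstacle; the single point demanding care is the systematic use of the hypothesis $2\in\mathcal{O}_K^*$. It intervenes twice: once to collapse the Hermite normal form to the identity, forcing $\mathfrak{A}_{L/K}=\mathcal{O}_K[G]$ rather than the strictly larger order $\mathcal{O}_K\!\left[\Id,\tfrac{-\Id+\sigma}{2}\right]$ that appears in the $\mathbb{Z}$-computation recorded above; and once to make $-2\delta_1\delta_2$ a unit exactly when $\delta_1\delta_2$ is. Keeping track of which conclusions hold over an arbitrary PID and which genuinely require $2$ to be invertible is the only thing to watch.
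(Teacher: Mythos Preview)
Your proposal is correct and follows essentially the same approach as the paper: you run the reduction method with the bases $W=\{\Id,\sigma\}$ and $B=\{1,z\}$, observe that the Hermite normal form of $M(H,L)$ is the identity once $2\in\mathcal{O}_K^*$, and then read off both $\mathfrak{A}_{L/K}=\mathcal{O}_K[G]$ and the generator criterion from $\det M_\delta=-2\delta_1\delta_2$. The paper's argument is the same computation, including the remark contrasting the $\mathbb{Z}$-reduction $\begin{pmatrix}1&1\\0&2\end{pmatrix}$ with the collapse to $I_2$ when $2$ is a unit.
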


\section{Description of the Hopf Galois structures}\label{sechgstructures}

In this section, we take $L/K$ to be a dihedral degree $2p$ extension of fields with $\mathrm{char}(K)\neq2$. Let $G=\mathrm{Gal}(L/K)\cong D_{2p}$. From now on, we establish the following presentation of $G$: $$G=\langle r,s\mid r^p=s^2=1,\,sr=r^{p-1}s\rangle.$$
The problem of counting and describing the Hopf Galois structures of such an extension was solved completely by Byott in his paper \cite{byottpq}, in which he actually studies Galois extension of degree $pq$, where $q$ is a prime number dividing $p-1$. We recover the degree $2p$ extensions by choosing $q=2$. By the Greither-Pareigis theorem, the Hopf Galois structures of $L/K$ are in one-to-one correspondence with regular subgroups $N$ of $\mathrm{Perm}(G)$ normalized by $\lambda(G)$,  where $\lambda$ is the left regular representation of $G$. Since the order of such a subgroup is $2p$, the Hopf Galois structures can be divided in:

\begin{itemize}
    \item[1.] Dihedral type: The group $N$ is isomorphic to $D_{2p}$. Since $G$ is not abelian, there are two different Hopf Galois structures of this type: the classical Galois structure, given by the case $N=\rho(G)$ (where $\rho\colon G\longrightarrow\mathrm{Perm}(G)$ is the right regular representation), and the canonical non-classical Galois structure, in which case $N=\lambda(G)$.
    \item[2.] Cyclic type: The group $N$ is isomorphic to $C_{2p}$. If we call $\mu=\lambda(r)$ and $\eta_i=\rho(r^is)$, then there are $p$ Hopf Galois structures of this type, given by $$N_i=\langle \mu,\eta_i\rangle,\,0\leq i\leq p-1.$$
\end{itemize}

By \cite[Theorem 6.2]{byottpq}, those are all the Hopf Galois structures of $L/K$. As already mentioned, we are especially interested in the Hopf Galois structures of cyclic type. In the remainder of the section we describe the Hopf algebra of those Hopf Galois structures. By \cite[Theorem 9]{cresporiovela}, the cyclic Hopf Galois structures of $L/K$ are the induced ones. Then, they can be described in terms of Hopf Galois structures of subextensions of $L/K$.

 The lattice of intermediate fields of $L/K$ is in one-to-one inclusion-reversing correspondence with the lattice of subgroups of $G$. The group $G$ has a unique order $p$ subgroup $J=\langle r\rangle$, which corresponds to the extension $F/K$, and $p$ order $2$ subgroups $G_i'=\langle r^is\rangle$, $i\in\{1,...,p\}$. Then, there are $p$ subextensions of $L/K$ of degree $p$. None of the extensions $E/K$ are Galois because $G_i'$ is not a normal subgroup of $G$ for every $i$, but they all have $J$ as normal complement. Thus, every extension $E/K$ is an almost classically Galois extension. Moreover, by \cite[Theorem 2]{byottuniqueness}, each extension $E/K$ has a unique Hopf Galois structure.

By \cite[Proposition 5.5]{gilrio}, the induced Hopf Galois structures correspond to all possible decompositions of $G$ as semidirect product. Those decompositions are $G=J\rtimes G_i'$, $1\leq i\leq p$. Namely, the induced Hopf Galois structures are of the form $$H=H_1^{(i)}\otimes H_2,$$ where, for every $i$, $H_1^{(i)}$ is the Hopf Galois structure of $L^{G_i'}/K$ and $H_2$ is the Hopf Galois structure of $F/K$. Thus, in order to describe the induced Hopf Galois structures, it is enough to characterize those of the subextensions of $L/K$.

The easiest case is the Hopf Galois structure $H_2$ of the quadratic extension $F/K$. Indeed, this is the classical Galois structure of $F/K$, which is the $K$-group algebra of the Galois group. Now, the Galois group of this extension is generated by the restriction of any automorphism $r^is$ to $F$ (note that since $r$ fixes $F$, that restriction does not depend on $i$). Then, $H_2=K[\eta_i]$, where $\eta_i=\rho(r^is)$ and $\rho$ is the right translation map of $\mathrm{Gal}(F/K)$ in its group of permutations.

Let us fix a degree $p$ subextension $E/K$ of $L/K$ and let $H_1$ be its unique Hopf Galois structure. Then $G'\coloneqq\mathrm{Gal}(L/E)=\langle r^is\rangle$ for some $1\leq i\leq p$. By the Greither-Pareigis theorem, $H_1=L[N_1]^G$, where $N_1$ is the unique regular subgroup of $\mathrm{Perm}(G/G')$ normalized by $\overline{\lambda}(G)$, where $\overline{\lambda}\colon G\longrightarrow\mathrm{Perm}(X)$ is the left translation map of $G$ in $\mathrm{Perm}(X)$.  One can easily check that $N_1=\overline{\lambda}(J)$ satisfies the required properties. Let us call $\overline{\mu}=\overline{\lambda}(r)$, which can be expressed as the permutation $(\overline{1_G},\overline{r},\dots,\overline{r^{p-1}})$ of the quotient set $G/G'$. This element is the generator of $N_1$, that is, $$N_1=\langle\overline{\mu}\rangle=\{\overline{\mathrm{Id}},\overline{\mu},\dots\overline{\mu}^{p-1}\}.$$ Let us determine the Hopf algebra $H_1=L[N_1]^G$ of this Hopf Galois structure.

\begin{prop}\label{propbasisppart} Let $E/K$ be a degree $p$ subextension of a dihedral degree $2p$ extension $L/K$. The unique Hopf Galois structure $H_1$ of $E/K$ has a $K$-basis given by the identity $w_1=\mathrm{Id}$ and the elements 
$$
    w_{1+i}=z(\overline{\mu}^{\,i}-\overline{\mu}^{\,-i}), \quad
    w_{\frac{p+1}{2}+i}=\overline{\mu}^{\,i}+\overline{\mu}^{\,-i},
$$
where $1\leq i\leq\frac{p-1}{2}$ and $z\in L$ is such that $z\notin K$ and $z^2\in K$.
\end{prop}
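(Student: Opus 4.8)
The plan is to compute the fixed ring $L[N_1]^G$ directly from its definition. Recall that in the Greither--Pareigis description $G$ acts on $L[N_1]$ semilinearly: it acts on the coefficients in $L$ through the Galois action and on $N_1=\langle\overline{\mu}\rangle$ by conjugation through $\overline{\lambda}$, so that $g$ sends $\sum_j c_j\overline{\mu}^{\,j}$ to $\sum_j g(c_j)\,{}^g\overline{\mu}^{\,j}$, where ${}^g\overline{\mu}=\overline{\lambda}(g)\,\overline{\mu}\,\overline{\lambda}(g)^{-1}$. Since $G=\langle r,s\rangle$, an element is $G$-invariant exactly when it is fixed by both $r$ and $s$, so the first step is to make the conjugation action of these two generators explicit.

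I would then record the two conjugation formulas. Because $J=\langle r\rangle$ is abelian and $N_1=\overline{\lambda}(J)$, conjugation by $r$ fixes $\overline{\mu}$; and from the relation $srs^{-1}=r^{-1}$ in $D_{2p}$ one gets ${}^s\overline{\mu}=\overline{\lambda}(srs^{-1})=\overline{\mu}^{-1}$. Writing a general element of $L[N_1]$ as $\sum_{j=0}^{p-1}c_j\overline{\mu}^{\,j}$ with $c_j\in L$, invariance under $r$ forces $r(c_j)=c_j$, i.e.\ $c_j\in L^{J}=F$, the quadratic subextension; invariance under $s$, after reindexing $j\mapsto -j$, forces $c_j=\overline{s}(c_{-j})$, where $\overline{s}$ denotes the restriction of $s$ to $F$, the nontrivial element of $\Gal(F/K)$.

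Next I would bring in $z$. Since $z^2\in K$ and $z\notin K$, the field $K(z)$ is a quadratic subextension of $L/K$; as $D_{2p}$ has a unique subgroup of index $2$, namely $J$, this forces $K(z)=L^J=F$, so $z\in F$ and $\overline{s}(z)=-z$ (the only conjugate of $z$ other than itself). Hence each coefficient can be written $c_j=a_j+b_jz$ with $a_j,b_j\in K$, and the invariance conditions become: $c_0\in K$ for the index $j=0$, while for each pair $\{j,-j\}$ with $1\leq j\leq\frac{p-1}{2}$ the coefficient $c_{-j}=\overline{s}(c_j)$ is determined by $c_j$. Expanding
$$ c_j\overline{\mu}^{\,j}+\overline{s}(c_j)\overline{\mu}^{\,-j}=a_j(\overline{\mu}^{\,j}+\overline{\mu}^{\,-j})+b_j\,z(\overline{\mu}^{\,j}-\overline{\mu}^{\,-j}) $$
exhibits every invariant element as a $K$-linear combination of $\mathrm{Id}$ together with the listed elements $z(\overline{\mu}^{\,i}-\overline{\mu}^{\,-i})$ and $\overline{\mu}^{\,i}+\overline{\mu}^{\,-i}$, showing that these $p$ elements span $H_1=L[N_1]^G$ over $K$.

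Finally, linear independence is immediate: a vanishing $K$-combination, read off in the $L$-basis $\{\overline{\mu}^{\,j}\}$ of $L[N_1]$, produces on each $\overline{\mu}^{\,\pm i}$ with $i\geq 1$ the relation $a_i\pm b_iz=0$, forcing $a_i=b_i=0$ since $1,z$ are $K$-linearly independent, while $\mathrm{Id}$ supplies the remaining coefficient. As there are exactly $1+2\cdot\frac{p-1}{2}=p=[E:K]=\dim_K H_1$ of them, they form a $K$-basis. I expect the only genuinely delicate point to be the bookkeeping of the two-part $G$-action: noticing that $r$-invariance pushes the coefficients down into $F$ rather than leaving them in $L$, and that $s$ simultaneously inverts $\overline{\mu}$ and acts as $z\mapsto -z$. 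Once these are pinned down, the rest is the routine pairing-up of the indices $j$ and $-j$.
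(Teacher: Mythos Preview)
Your proposal is correct and follows essentially the same approach as the paper: compute $L[N_1]^G$ by imposing invariance under $r$ and $s$ separately, use $r$-invariance to force coefficients into $F$, use $s$-invariance to pair $c_j$ with $\overline{s}(c_{-j})$, and then decompose each $c_j$ along $\{1,z\}$ to obtain the listed elements as a spanning set of size $p=\dim_K H_1$. Your version is slightly more explicit in justifying $K(z)=F$ and in verifying linear independence directly, but there is no substantive difference.
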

\begin{proof}
Let $x\in H_1$. Since $H_1\subset L[N_1]$, there are elements $a_i\in L$ with $0\leq i\leq p-1$ such that $$x=\sum_{i=0}^{p-1}a_i\overline{\mu}^{,i}.$$ The action of $G$ on $N_1$ is given by $r(\overline{\mu})=\overline{\mu},\quad(\overline{\mu})=\overline{\mu}^{\,-1}.$ Now, since $x\in H$, it is fixed by the action of $G$ on $H_1$. Then,
\begin{equation*}
    \begin{split}
        x=r(x)&=\sum_{i=0}^{p-1}r(a_i)\overline{\mu}^{\,i}, \\
        x=s(x)&=\sum_{i=0}^{p-1}s(a_i)\overline{\mu}^{\,-i}=\sum_{i=0}^{p-1}s(a_{p-i})\overline{\mu}^{\,i},
    \end{split}
\end{equation*} where in the last line we consider the subscripts mod $p$. The first equality gives that $r(a_i)=a_i$ for all $0\leq i\leq p-1$, so $a_i\in L^{\langle r\rangle}=F$. On the other hand, the second one yields $s(a_i)=a_{p-i}$ for every $i$. Since $s(a_0)=a_0$, we get $a_0\in K$. \\

Now, $a_i\in F$ for $1\leq i\leq p$, so there are $a_i^{(1)},a_i^{(2)}\in K$ such that $a_i=a_i^{(1)}+a_i^{(2)}z$. For those values of $i$, $s(a_i)=a_i^{(1)}-a_i^{(2)}z$, but also $s(a_i)=a_{p-i}$, so $a_{p-i}=a_i^{(1)}-a_i^{(2)}z$. Then, \begin{equation}\label{descrpelemppart}
    \begin{split}
        x&=a_0\mathrm{Id}+\sum_{i=0}^{\frac{p-1}{2}}(a_i^{(1)}+a_i^{(2)}z)\overline{\mu}^{\,i}+\sum_{i=0}^{\frac{p-1}{2}}(a_i^{(1)}-a_i^{(2)}z)\overline{\mu}^{\,i}\eta\\&=a_0\mathrm{Id}+\sum_{i=0}^{\frac{p-1}{2}}a_i^{(1)}(\overline{\mu}^{\,i}+\overline{\mu}^{\,-i})+\sum_{i=0}^{\frac{p-1}{2}}a_i^{(2)}z(\overline{\mu}^{\,i}-\overline{\mu}^{,-i}).
    \end{split}
\end{equation}

We have obtained a set of generators and therefore a $K$-basis, since $H_1$ has dimension $p$. 
\end{proof}

\begin{rmk}\normalfont Let $\lambda\colon G\longrightarrow\mathrm{Perm}(G)$ be the left regular representation of $G$. Since the powers of $\mu=\lambda(r)$ factorize through $G'$ as permutations of $G$, the groups $\overline{\lambda}(J)$ and $\lambda(J)$ can be identified by establishing $\overline{\lambda}(r^i)=\lambda(r^i)$, and in particular, $\overline{\mu}=\mu$. Hence, from now on, we will take the elements of the basis of $H_1$ in the statement above with the powers of $\mu$ instead of $\overline{\mu}$.
\end{rmk}

The elements $w_k$ for $k$ even (resp. odd) can be described as linear combinations of even (resp. odd) powers of $w_2=z(\mu-\mu^{-1})$, so they generate $H_1$ as $K$-algebra. That is, $H_1=K[z(\mu-\mu^{-1})]$. Then, one can conclude the following:

\begin{coro}\label{corobasisinduced} The induced Hopf Galois structures of $H$ are $$H^{(i)}=K[z(\mu-\mu^{-1}),\eta_i],\,1\leq i\leq p.$$ With the notation of Proposition \ref{propbasisppart}, a $K$-basis of $H^{(i)}$ is $$\{w_1,\dots,w_p,w_1\eta,\dots,w_p\eta\}.$$
\end{coro}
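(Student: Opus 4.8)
The plan is to exploit the fact, recalled just before the corollary, that every induced Hopf Galois structure of cyclic type decomposes as a tensor product $H^{(i)}=H_1\otimes_K H_2$, where $H_1$ is the unique Hopf Galois structure of the degree $p$ subextension $E=L^{G_i'}$ described in Proposition \ref{propbasisppart}, and $H_2=K[\eta_i]$ is the classical structure of the quadratic subextension $F/K$. Here $\eta=\eta_i$ is the same element throughout, the subscript being suppressed once $i$ is fixed. Once this identification is in place, both assertions reduce to elementary facts about tensor products of $K$-algebras.

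For the basis claim I would recall that $H_1$ has the $K$-basis $\{w_1,\dots,w_p\}$ exhibited in Proposition \ref{propbasisppart}, while $H_2=K[\eta_i]$ has $K$-basis $\{\mathrm{Id},\eta_i\}$, since $(r^is)^2=1$ forces $\eta_i^2=\mathrm{Id}$. The tensor product of these bases, namely $\{w_j\otimes\mathrm{Id},\,w_j\otimes\eta_i:1\leq j\leq p\}$, is then automatically a $K$-basis of $H_1\otimes_K H_2$. The point to check is that, under the concrete realization of the induced structure inside $L[N_i]$, the multiplication map identifies $w_j\otimes\mathrm{Id}$ with $w_j$ and $w_j\otimes\eta_i$ with the group-algebra product $w_j\eta$. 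This holds because $N_i=\langle\mu,\eta_i\rangle$ is abelian, so $\mu$ and $\eta_i$ commute and the product map $H_1\otimes_K H_2\to H^{(i)}$ is precisely the Hopf-algebra isomorphism defining the induced structure.

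For the algebra description I would invoke the remark following Proposition \ref{propbasisppart}, which gives $H_1=K[z(\mu-\mu^{-1})]$, together with $H_2=K[\eta_i]$. A tensor product of two $K$-algebras is generated as a $K$-algebra by the images of the generators of its two factors; using once more that $\mu$ and $\eta_i$ commute inside $L[N_i]$, these images are $z(\mu-\mu^{-1})$ and $\eta_i$, whence $H^{(i)}=K[z(\mu-\mu^{-1}),\eta_i]$. A dimension count confirms there is no collapse: $K[z(\mu-\mu^{-1})]=H_1$ has dimension $p$, and since $\eta_i^2=\mathrm{Id}$ and the $2p$ elements $\{w_j,w_j\eta\}$ are linearly independent by the previous paragraph, the sum $H_1+H_1\eta_i$ is direct and has dimension $2p=\dim_K H^{(i)}$.

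The only genuine subtlety — and the step I would treat most carefully — is making precise the passage from the abstract tensor product $H_1\otimes_K H_2$ to its concrete incarnation as the subalgebra $K[z(\mu-\mu^{-1}),\eta_i]$ of $L[N_i]$, that is, verifying that multiplication in the group algebra $L[N_i]$ really implements the tensor product. This is where the commutativity of $\mu$ and $\eta_i$ and the functorial description of induced structures (\cite[Proposition 5.5]{gilrio}) are essential; everything else is bookkeeping with the basis of Proposition \ref{propbasisppart}.
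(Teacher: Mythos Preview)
Your proposal is correct and follows essentially the same route as the paper, which treats the corollary as an immediate consequence of the preceding discussion: the induced structures are $H^{(i)}=H_1\otimes_K H_2$ with $H_1=K[z(\mu-\mu^{-1})]$ (from the remark just before the corollary) and $H_2=K[\eta_i]$, so the tensor product of the bases of Proposition~\ref{propbasisppart} and $\{\mathrm{Id},\eta_i\}$ yields the stated basis. Your added care about the multiplication map $H_1\otimes_K H_2\to L[N_i]$ and the commutativity of $\mu$ and $\eta_i$ is more explicit than the paper, which leaves this identification implicit, but the underlying argument is the same.
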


This finishes the description of all Hopf Galois structures of $L/K$. Note that it coincides with the expression of the cyclic Hopf Galois structures shown in \cite[Section 6]{KKTU}.

\section{The arithmetic of the extension}\label{secdihedral}

From \cite{awtreyedwards} we have an explicit description of the degree $2p$ dihedral $p$-adic fields:
\begin{enumerate}
\item If $p = 3$, there are six non-isomorphic cubic extensions of $\Q_3$ whose normal closures have Galois group isomorphic to 
$D_{6}$. The polynomials defining these extensions can be chosen to be the Amano polynomials 
$$
x^3+3,\quad
x^3+12,\quad
x^3+21,\quad
x^3+3x+3,\quad
x^3+6x+3,\quad
x^3+3x^2+3.
$$
The inertia subgroups for these fields are all dihedral of order $6$ with the exception of the field defined by 
$x^3 + 3x^2 + 3$, whose inertia subgroup is cyclic of order 3.
\item If $p > 3$, there are three non-isomorphic degree $p$ extensions of $\Q_p$ whose normal closures have Galois group isomorphic to 
$D_{2p}$. These extensions are defined by the polynomials 
$$
x^p+px^{p-1}+p,\quad
x^p+2px^{\frac{p-1}2}+p,\quad
x^p+(p-2)p x^{\frac{p-1}2}+p.
$$
The inertia subgroup of the field defined by $x^p + px^{p-1} + p$ is cyclic of order $p$. 
The other two fields have inertia subgroup equal to $D_{2 p}$.
\end{enumerate}

Let us call $f$ any of these polynomials and let $L$ be the splitting field of $f$ over $\mathbb{Q}_p$, so $L/\mathbb{Q}_p$ is a dihedral degree $2p$ extension. Note that the $p$ degree $p$ subextensions $E/\mathbb{Q}_p$ of $L/\mathbb{Q}_p$ correspond to $E=\mathbb{Q}_p(\alpha)$ with $\alpha$ running through the roots of $f$. Let us fix such an intermediate field $E=\Qp(\alpha)$. Since the minimal polynomial of $\alpha$ is $p$-Eisenstein, the extension $E/\Qp$ is totally ramified and its ring of integers is $\ROE=\Zp[\alpha]$. Hence, $L/\Qp$ is always wildly ramified. Recall that since 
$L/\Qp$ is 
dihedral, $L/\Qp$ has a unique quadratic subextension $F/\Qp$. We write $F=\Qp(z)$ with $z^2\in\Qp$.

\subsection{Discriminants and ramification}

We can use Ore conditions from \cite{ore} to determine the discriminants of the totally ramified extensions $E/\Qp$. Translated to our case, we obtain:

\begin{prop}
Given $j_0\in \Z$, there exist totally ramified extensions $E/\Q_p$ of degree $p$ and discriminant $p^{p+j_0-1}$ (up to multiplication by an invertible element of $\mathbb{Z}_p$) if and only if
$1\leq j_0\leq p$. Moreover:
\begin{enumerate}
\item For $j_0=p$, an Eisenstein polynomial $f(x)=x^p+\displaystyle\sum_{i=0}^{p-1}f_ix^i \in \Z_p[x]$ has discriminant ${p}^{2p-1}$ if 
$v_{ p}(f_i) \ge 2$ for  $1 \le i \le p -1.$
\item For $0< j_0< p$, an Eisenstein polynomial $f(x)=x^p+\displaystyle\sum_{i=0}^{p-1}f_ix^i \in \Z_p[x]$ has discriminant ${p}^{p+j_0-1}$ if
$v_{ p}(f_i) \ge 2$ for  $1 \le i < p-1$, $i\neq j_0$, and $v_{p}(f_{j_0} ) =1$.
\end{enumerate}
\end{prop}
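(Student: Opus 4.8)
The plan is to reduce the whole statement to the computation of a single valuation, $v_E(f'(\alpha))$, where $\alpha$ is a root of the Eisenstein polynomial $f$ and $v_E$ is the normalized valuation of $E$ (so that $v_E(\alpha)=1$ and $v_E(p)=p$). Since $f$ is Eisenstein we have $\ROE=\Zp[\alpha]$, hence $E/\Qp$ is totally ramified of degree $p$ and $\disc(E/\Qp)$ agrees with $\disc(f)$ up to a unit of $\Zp$. Writing $\disc(f)=\pm\Res(f,f')=\pm N_{E/\Qp}(f'(\alpha))$ and using that the residue degree is $1$ (so $v_p\circ N_{E/\Qp}=v_E$ on $E^\times$), one gets $v_p(\disc(f))=v_E(f'(\alpha))$. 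Thus the exponent in the proposition is exactly $v_E(f'(\alpha))$, and everything else is a valuation computation.

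First I would expand
$$f'(\alpha)=p\,\alpha^{p-1}+\sum_{i=1}^{p-1} i\,f_i\,\alpha^{i-1}$$
and record the valuation of each summand: the leading term has $v_E(p\,\alpha^{p-1})=2p-1$, while for $1\le i\le p-1$ one has $v_E(i f_i\alpha^{i-1})=p\,v_p(f_i)+(i-1)$, using that $v_p(i)=0$ since $i<p$. The step I expect to be the crux is the observation that these $p$ valuations are \emph{pairwise distinct}: reducing modulo $p$, the $i$-th summand contributes $i-1\in\{0,\dots,p-2\}$, whereas the leading term contributes $2p-1\equiv -1$, so no two summands lie in the same residue class modulo $p$. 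Consequently there is no cancellation and $v_E(f'(\alpha))$ equals the \emph{minimum} of the term valuations, exactly. This is what makes the discriminant depend only on the stated valuation conditions on the $f_i$.

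With this in hand the proposition becomes bookkeeping. Every index term has valuation $\ge p$ (because $v_p(f_i)\ge 1$) and the leading term has valuation $2p-1$, so the minimum always lies in $\{p,p+1,\dots,2p-1\}$; writing it as $p+j_0-1$ yields at once the range $1\le j_0\le p$, which is the ``only if'' direction (and recovers the classical sharp bounds $p\le v_E(f'(\alpha))\le 2p-1$ for a wildly ramified degree $p$ extension). For the refined cases: if $v_p(f_i)\ge 2$ for all $1\le i\le p-1$, then every index term has valuation $\ge 2p$, so the minimum is the leading term $2p-1$ and $j_0=p$ (case 1); if instead $v_p(f_{j_0})=1$ while the smaller relevant coefficients satisfy $v_p(f_i)\ge 2$, then the $j_0$-th term has valuation $p+j_0-1$, and one checks it is strictly smaller than every other term — in particular smaller than the leading $2p-1$, smaller than the potentially small contribution $\ge 2p-2$ coming from $i=p-1$ when $j_0<p-1$, and smaller than the $\ge 2p$ coming from the $v_p(f_i)\ge 2$ indices — so the minimum is $p+j_0-1$ (case 2). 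The only care needed here is the edge index $i=p-1$, whose coefficient is constrained only by the Eisenstein condition; the distinct-valuations observation guarantees it never pushes the minimum below $p+j_0-1$.

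Finally, for the ``if'' (existence) direction it suffices to exhibit, for each $j_0\in\{1,\dots,p\}$, one Eisenstein polynomial meeting the stated valuation conditions, since every Eisenstein polynomial of degree $p$ defines a totally ramified degree $p$ extension of $\Qp$. For instance $x^p+p$ realizes $j_0=p$, and $x^p+p\,x^{j_0}+p$ realizes $0<j_0<p$; both are Eisenstein, and the computation above immediately gives the asserted discriminant exponent $p^{p+j_0-1}$.
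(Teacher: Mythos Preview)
Your argument is correct and self-contained. The paper does not actually prove this proposition; it simply refers to Ore's original paper and to \cite[Proposition 3.1 and Lemma 5.1]{pauliroblot} for the statement and proof in greater generality. What you have written is precisely the classical argument behind Ore's conditions: using $\ROE=\Zp[\alpha]$ to identify $v_p(\disc(E/\Qp))$ with $v_E(f'(\alpha))$, and then exploiting that the summands of $f'(\alpha)$ have pairwise distinct $v_E$-valuations (distinct residues mod $p$) so that the valuation of the sum is the minimum. Your handling of the edge index $i=p-1$ is fine: when $j_0<p-1$ one has $p+j_0-1\le 2p-3<2p-2\le v_E\big((p-1)f_{p-1}\alpha^{p-2}\big)$, and when $j_0=p-1$ that term \emph{is} the $j_0$-term. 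So your proof is exactly the content of the cited references, spelled out in the special case $K=\Qp$, $n=p$.
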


The statement and the proof in its more general form can be consulted also in \cite[Proposition 3.1 and Lemma 5.1]{pauliroblot}.

Applying the result above to our degree $p$ extensions, we obtain the following data:
\begin{center}
\begin{tabular}{ c ||l | c | c }
& Polynomial & $j_0$ &$v_p(d_{E/\Q_p})$ \\
\hline \hline
$p=3$ & $x^3+3a$ \quad ($a=1,4,7$)  & 3 & $5$ \\
\hline
$p\ge 3$ &$x^p+(p-2)p x^{\frac{p-1}2}+p$ &$\dfrac{(p-1)}2$&$\frac{3(p-1)}2$\\
&$x^p+2px^{\frac{p-1}2}+p$&$\dfrac{(p-1)}2$& $\frac{3(p-1)}2$ \\
& $x^p+px^{p-1}+p$&$p-1$& $2(p-1)$ \\
\end{tabular}
\end{center}

Note that the unused values of $j_0$ between $1$ and $p$ correspond to other degree $p$ extensions of $\mathbb{Q}_p$ with a different Galois closure.

Now, taking into account the relative discriminant formula
$$
d_{L/\Q_p}=N_{E/\Q_p}(d_{L/E}) \cdot d_{E/\Q_p}^2
$$
we obtain the discriminant of the dihedral extension. 
If $L/E$ is unramified, then $d_{L/\Q_p}= d_{E/\Q_p}^2$ and 
otherwise, $d_{L/\Q_p}= p\cdot d_{E/\Q_p}^2$.


On the other hand, let us denote $G_i$ the $i$-th ramification group of the extension $L/\Qp$. If the inertia group is $C_p$, then 
$$
v_p(d_{L/\Q_p})=4(p-1)=2( \sum_{i\ge 0} (|G_i|-1))=2( (p-1)+(p-1)+\dots),
$$ whereas for the totally ramified cases we have
$$
v_p(d_{L/\Q_p})=3p-2= \sum (|G_i|-1)=( 2p-1)+(p-1)+\dots
$$
except for the radical extensions, where the equality is $11=5+2+\dots$.
Therefore, for all the non-radical extensions we have that the second ramification group
 $G_2$ is trivial. These extensions are called \emph{weakly ramified}. 

\begin{center}
\begin{tabular}{ c ||l |  c | c| c}
& Polynomial  & $v_p(d_{E/\Q_p})$ & $v_p(d_{L/\Q_p})$ & $G_0\supseteq G_1\supseteq\dots $ \\
\hline \hline
$p=3$ &$x^3+3a$\quad ($a=1,4,7$)   & $5$ & $11$&$S_3\supseteq C_3\supseteq C_3\supseteq C_3\supseteq 1$\\
\hline
$p\ge 3$ &$x^p+(p-2)p x^{\frac{p-1}2}+p$ & $\frac{3(p-1)}2$& $3p-2$& $D_{2p}\supseteq C_p\supseteq 1$\\
&$x^p+2px^{\frac{p-1}2}+p$& $\frac{3(p-1)}2$ & $3p-2$& $D_{2p}\supseteq C_p\supseteq 1$\\
& $x^p+px^{p-1}+p$& $2(p-1)$ & $4(p-1)$& $C_p\supseteq C_p\supseteq 1$\\
\end{tabular}
\end{center}

Then, the classification of dihedral extensions $L/\mathbb{Q}_p$ of degree $2p$ of $\mathbb{Q}_p$ can be presented in a more convenient way:

\begin{itemize}
    \item[1.] If $L/\mathbb{Q}_p$ is weakly ramified, then $L$ is the splitting field over $\mathbb{Q}_p$ of one of the polynomials $$x^p+2px^{\frac{p-1}{2}}+p,\quad x^p+p(p-2)x^{\frac{p-1}{2}}+p,\quad x^p+px^{p-1}+p.$$
    \item[2.] Otherwise, $p=3$ and $L$ is the splitting field over $\mathbb{Q}_3$ of one of the polynomials $$x^3+3,\quad x^3+12,\quad x^3+21.$$
\end{itemize}

\begin{rmk}\normalfont The radical cases are the unique ones for which $t=3$, which corresponds to $L/\mathbb{Q}_p$ having maximal ramification. Since $L/\mathbb{Q}_p$ is wildly ramified, this situation is a particular case of \cite[Corollaire after Proposition 6]{berge}. Namely, it gives that for a wildly ramified extension with dihedral inertia, almost maximal ramification is equivalent to $t_1=3$ and $p=3$.
\end{rmk}


\subsection{Basis and primitive elements}\label{subsectprimit}

Recall the notation $E=\Qp(\alpha)$ and $F=\Qp(z)$. Since $E/\Qp$ and $F/\Qp$ are linearly disjoint
for the extension $L/\Qp$, we can consider the tensor basis  
$$ 
\{1, z, \alpha, \alpha z, \dots , \alpha^{p-1}, \alpha^{p-1}z\}
$$
When $F/\Qp$ is unramified these extensions are arithmetically disjoint and this is also a $\Z_p$-basis for the ring of integers 
 $\OL$. When this is not the case, this tensor basis is not well suited to search for free generators of $\OL$. Since we are dealing with totally ramified extensions, we can look for a primitive element $\gamma$  which is also a uniformizing element of $\OL$. Under these conditions, the powers of $\gamma$ form a $\Q_p$- basis of $L$ and also a 
$\Z_p$- basis of $\OL$. 

\begin{prop}\label{propunif} Let $p\ge 3$ be a prime.
Let $f$ be a polynomial of degree $p$ having totally ramified dihedral normal closure of degree $2p$.
Let $\alpha$ be a root of $f$ in the algebraic closure of $\Qp$ and $E=\Qp(\alpha)$. Let $L/\Qp$ be the normal closure of $f$ and $F/\Qp$ be its quadratic subextension. Write $F=\Qp(z)$ with $v_F(z)=1$. Then, $$\gamma=\dfrac{z}{\alpha^{\frac{p-1}2}}
$$ 
has $v_L(\gamma)=1$.
\end{prop}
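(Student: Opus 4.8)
The plan is to compute the valuation $v_L(\gamma)$ directly from the valuations of the numerator $z$ and the denominator $\alpha^{(p-1)/2}$, using the multiplicativity of the valuation together with the ramification data of the two subextensions $E/\Qp$ and $F/\Qp$ inside $L$.

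First I would record the ramification indices. Since $L/\Qp$ is totally ramified of degree $2p$, we have $e(L/\Qp)=2p$, so the normalized valuation $v_L$ satisfies $v_L(p)=2p$. The extension $E=\Qp(\alpha)$ is totally ramified of degree $p$ because $f$ is $p$-Eisenstein (as noted in Section~\ref{secdihedral}), so $v_E(\alpha)=1$ and $v_E(p)=p$; transporting this to $L$, where $v_L=e(L/E)\,v_E$ with $e(L/E)=2$, gives $v_L(\alpha)=2$. Likewise $F=\Qp(z)$ is a (ramified) quadratic extension with $v_F(z)=1$ and $v_F(p)=2$; since $e(L/F)=p$, we get $v_L(z)=p$.

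\emph{The key step} is then the arithmetic combination. By multiplicativity,
$$
v_L(\gamma)=v_L(z)-\tfrac{p-1}{2}\,v_L(\alpha)=p-\tfrac{p-1}{2}\cdot 2=p-(p-1)=1,
$$
which is exactly the claim. The exponent $(p-1)/2$ is an integer because $p$ is odd, so $\gamma$ is a genuine element of $L$, and the computation is valid in $\Z$.

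\emph{The main obstacle} I anticipate is not the final algebra but justifying that $F/\Qp$ is ramified (so that $v_F(z)=1$ is consistent with $e(L/F)=p$ and the extension is totally, not merely partially, ramified). In the totally ramified dihedral cases at hand, $L/\Qp$ has ramification index $2p$, which forces both the quadratic subextension $F/\Qp$ and the degree $p$ subextensions to be ramified, pinning down $e(L/F)=p$ and $e(L/E)=2$; I would verify this by appealing to the ramification group data tabulated above (where $G_0=D_{2p}$ in the totally ramified cases), from which $e(L/\Qp)=|G_0|=2p$ and the multiplicativity of ramification indices in the tower $\Qp\subseteq F\subseteq L$ and $\Qp\subseteq E\subseteq L$ yield the indices used. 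Once these indices are fixed, the valuation computation is immediate.
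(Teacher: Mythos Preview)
Your argument is correct and follows exactly the same route as the paper's own proof: compute $v_L(z)=p$ and $v_L(\alpha)=2$ from the ramification indices $e(L/F)=p$ and $e(L/E)=2$, then combine multiplicatively to get $v_L(\gamma)=p-2\cdot\frac{p-1}{2}=1$. Your added paragraph justifying that $F/\Qp$ is ramified (via $|G_0|=2p$ in the totally ramified case) simply makes explicit what the paper leaves implicit in its one-line computation.
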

Indeed, we have 
$$v_F(z)=v_E(\alpha) = 1 \implies v_L(z) = p,\  v_L(\alpha) = 2 \implies v_L(\gamma) = p - 2\dfrac{(p - 1)}2 =1.$$

\section{The non-totally ramified case}\label{secnontotcase}

Assume that the dihedral degree $2p$ extension $L/\mathbb{Q}_p$ of $p$-adic fields is not totally ramified. We have seen that any degree $p$ subextension $E/\mathbb{Q}_p$ is totally ramified, so this is the same as saying that the quadratic subextension $F/\mathbb{Q}_p$ is unramified. Then, the extensions $E/\mathbb{Q}_p$ and $F/\mathbb{Q}_p$ are arithmetically disjoint. We can exploit this fact to solve completely the problem of the freeness for this case.

\subsection{The extension $L/F$}

Following the notations fixed in Section \ref{secdihedral}, we now focus on the extension $L/F$. It is a Galois extension with Galois group $J=\langle r\rangle$, the unique order $p$ subgroup of $G$. That is, it is a cyclic degree $p$ extension. Therefore, the classical Galois structure $F[\lambda(J)]$ is its unique Hopf Galois structure. The problem of the Galois module structure of the integers rings of such an extension was completely solved by F. Bertrandias, J.P. Bertrandias and M.J. Ferton (see \cite{bertbertfert} for the proof and \cite[Theorem 3.4]{thomas} for the statement):

\begin{thm} Let $K$ be a finite extension of $\mathbb{Q}_p$ and let $L$ be a totally ramified degree $p$ extension of $K$. Let $t$ be the ramification number of $L/\Qp$. Let $\mathfrak{A}_{L/K}$ be the associated order in the classical Galois structure of $L/K$.
\begin{itemize}
    \item[1.] If $p$ divides $t$, then $\mathcal{O}_L$ is $\mathfrak{A}_{L/K}$-free.
    \item[2.] Otherwise, we write $t=pk+a$ for the euclidean division of $t$ by $p$, and we have:
    \begin{itemize}
        \item[(i)] If $0\leq t<\frac{pe(K/\mathbb{Q}_p)}{p-1}-1$, $\mathcal{O}_L$ is $\mathfrak{A}_{L/K}$-free if and only if $a$ divides $p-1$.
        \item[(ii)] If $\frac{pe(K/\mathbb{Q}_p)}{p-1}-1\leq t\leq \frac{pe(K/\mathbb{Q}_p)}{p-1}$, $\mathcal{O}_L$ is $\mathfrak{A}_{L/K}$-free if and only if the length of the expansion of $\frac{t}{p}$ as continuous fraction is at most $4$.
    \end{itemize}
\end{itemize}
\end{thm}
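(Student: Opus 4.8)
Since this is the classical theorem of Bertrandias--Bertrandias--Ferton, the plan is to reconstruct their strategy, which rests on an explicit description of $\mathfrak{A}_{L/K}$ and a reduction of the freeness question to a purely arithmetic problem governed by $t$. Write $G=\langle\sigma\rangle$ and set $\Delta=\sigma-1\in K[G]$. The first step is to understand the valuation behaviour of $\Delta$: since $t$ is the unique ramification break of $L/K$, one has $v_L(\Delta\pi_L)=v_L(\pi_L)+t$ for a uniformizer $\pi_L$, and more generally $v_L(\Delta x)\ge v_L(x)+t$, with equality for ``most'' valuations, while the relation $(\sigma-1)^p\equiv 0\pmod p$ forces an extra factor of $p$ (hence an extra $v_L$-gain of $p\,e(K/\Qp)$) each time one accumulates $p$ copies of $\Delta$. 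Combining these two effects, I would compute that
\[
\mathfrak{A}_{L/K}=\bigoplus_{i=0}^{p-1}\mathcal{O}_K\,\pi_K^{-\lfloor it/p\rfloor}\Delta^i ,
\]
where $\pi_K$ is a uniformizer of $K$ with $v_L(\pi_K)=p$; the exponents $\lfloor it/p\rfloor$ are exactly the largest powers of $\pi_K$ by which $\Delta^i$ may be divided while still preserving $\OL$. This form is the clean one valid in the good situations; away from them it must be corrected.

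Having the associated order, the second step is to translate freeness into an index computation in the spirit of Proposition \ref{criteriafreeness}. For $\beta\in\OL$ the module $\mathfrak{A}_{L/K}\,\beta$ is spanned over $\mathcal{O}_K$ by $\{\pi_K^{-\lfloor it/p\rfloor}\Delta^i\beta\}_{i=0}^{p-1}$, so $\OL$ is $\mathfrak{A}_{L/K}$-free with generator $\beta$ if and only if the $p$ integers
\[
v_L(\Delta^i\beta)-p\lfloor it/p\rfloor,\qquad 0\le i\le p-1,
\]
form a complete residue system modulo $p$ (a standard criterion for a family to be an $\mathcal{O}_K$-basis of a totally ramified $\OL$). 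Thus freeness becomes the question of whether some $\beta\in\OL$ makes these $p$ quantities pairwise incongruent mod $p$.

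The arithmetic core is the third step: analysing $v_L(\Delta^i\beta)$ as $\beta$ ranges over $\OL$. The essential subtlety is that $v_L(\Delta^i\beta)$ need \emph{not} equal the generic value $v_L(\beta)+it$: cancellations among leading terms force ``overshoots'' for certain indices $i$, and whether such an overshoot pushes the corresponding quantity out of its expected residue class (thereby destroying the complete residue system) depends on the pattern of these cancellations as $i$ grows. Case $1$ ($p\mid t$) is the favourable one, where a direct verification produces a working $\beta$. When $p\nmid t$, write $t=pk+a$ with $1\le a\le p-1$; the generic residues $v_L(\beta)+ia\pmod p$ are already distinct since $\gcd(a,p)=1$, so everything hinges on whether $\beta$ can be chosen to avoid \emph{all} bad overshoots, and the sequence of overshoots is precisely the one produced by the Euclidean algorithm applied to $t$ and $p$. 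This is where continued fractions enter: the optimal $\beta$ is dictated by the convergents of $t/p$, and the obstruction vanishes exactly under $a\mid p-1$ in the low range and under the length-$\le 4$ condition in the near-maximal range.

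The main obstacle is this last step, and specifically the near-maximal range $2(ii)$. There the extra $p$-divisibility coming from $(\sigma-1)^p$ interacts with the linear growth $it$ at comparable orders of magnitude, so the clean form of $\mathfrak{A}_{L/K}$ above can fail and the cancellations must be tracked to high precision. Carrying out the optimisation over all $\beta\in\OL$ and matching the outcome to the continued fraction expansion of $t/p$ is the delicate combinatorial heart of the argument; the boundary inequalities $\frac{p\,e(K/\Qp)}{p-1}-1\le t\le\frac{p\,e(K/\Qp)}{p-1}$ are exactly the thresholds at which this interaction changes the answer.
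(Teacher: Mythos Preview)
The paper does not prove this theorem at all: immediately before the statement it says that the result ``was completely solved by F.~Bertrandias, J.~P.~Bertrandias and M.~J.~Ferton (see \cite{bertbertfert} for the proof and \cite[Theorem 3.4]{thomas} for the statement)'', and then simply quotes the statement and applies it. So there is no proof in the paper to compare your proposal against; the theorem is imported as a black box from the literature.

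As for your sketch itself, it is a reasonable outline of the Bertrandias--Bertrandias--Ferton strategy (description of $\mathfrak{A}_{L/K}$ via $\Delta=\sigma-1$, the $\mathcal{O}_K$-basis criterion via a complete residue system of $L$-valuations, and the continued-fraction analysis of the overshoots), but it is not a proof: the decisive step in case~2(ii), where you must track the interaction between the $p$-divisibility of $(\sigma-1)^p$ and the linear growth $it$ and match the result to the continued-fraction length, is only described, not carried out. In particular you have not shown that the displayed $\mathcal{O}_K$-basis of $\mathfrak{A}_{L/K}$ is actually correct in the near-maximal range (you yourself flag that ``the clean form \dots\ can fail''), nor have you exhibited the optimal $\beta$ or proved that no $\beta$ works when the length exceeds~$4$. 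Since the paper only cites the result, this is fine for the purpose of comparison, but you should be aware that what you have written is a plan rather than a proof.
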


We apply this result to our situation with the cyclic degree $p$ extension $L/F$, which is totally ramified. The ramification number can be obtained from the table in Section \ref{secdihedral}: it is $t=1$ when $p>3$ and $t\in\{1,3\}$ when $p=3$, depending on whether or not the extension is weakly ramified. 

For $p>3$, we have that $p$ does not divide $t$, and since $t<p$, we have $a=t$. If the defining polynomial is one of the first two, then $F/\mathbb{Q}_p$ is ramified and $e(F/\mathbb{Q}_p)=2$, thus $$\frac{pe(F/\mathbb{Q}_p)}{p-1}-1=\frac{2p}{p-1}-1=\frac{p+1}{p-1},$$ and $t<\frac{p+1}{p-1}$. Since $t$ divides $p-1$, $\mathcal{O}_L$ is $\mathfrak{A}_{L/F}$-free. 
For the last polynomial, $F/\mathbb{Q}_p$ is unramified and $e(F/\mathbb{Q}_p)=1$, so $$\frac{pe(F/\mathbb{Q}_p)}{p-1}-1=\frac{p}{p-1}-1=\frac{1}{p-1}$$ and then $\frac{1}{p-1}<t<\frac{p}{p-1}$. The expansion of $\frac{1}{p}$ is trivial, so $\mathcal{O}_L$ is $\mathfrak{A}_{L/F}$-free. 

Now assume $p=3$. For the radical cases, we have that $t=3$ and $e(F/\mathbb{Q}_3)=2$. Hence $$\frac{3e(F/\mathbb{Q}_3)}{2}-1=\frac{6}{2}-1=2,$$ and then $t=\frac{3e(F/\mathbb{Q}_3)}{2}$. The expansion of $\frac{3}{3}=1$ is again trivial, so $\mathfrak{A}_{L/F}$-free. Now, if $L/\mathbb{Q}_3$ is weakly ramified, for the totally ramified cases we have $t=1$ and $e(F/\mathbb{Q}_3)=2$, so $t<\frac{3e(F/\mathbb{Q}_3)}{2}-1$ and as it divides $2$, $\mathcal{O}_L$ is $\mathfrak{A}_{L/F}$-free. Finally, if $L/\mathbb{Q}_3$ is weakly ramified and it is not totally ramified (the last polynomial), then $t=e(F/\mathbb{Q}_3)=1$, so $$\frac{3e(F/\mathbb{Q}_3)}{2}-1=\frac{3}{2}-1=\frac{1}{2}$$ and $t>\frac{1}{2}$. Since the expansion of $\frac{1}{3}$ is trivial, $\mathcal{O}_L$ is $\mathfrak{A}_{L/F}$-free.

In summary, we obtain:

\begin{coro}\label{corofreenesstensorf} Let $L/\mathbb{Q}_p$ be a dihedral degree $2p$ extension and let $F$ be the unique subfield of $L$ which is quadratic over $\mathbb{Q}_p$. Then, $\mathcal{O}_L$ is $\mathfrak{A}_{L/F}$-free.
\end{coro}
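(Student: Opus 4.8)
The plan is to apply the Bertrandias--Bertrandias--Ferton theorem stated just above to the cyclic degree $p$ extension $L/F$, whose unique Hopf Galois structure is the classical one, so that $\mathfrak{A}_{L/F}$ is precisely its classical associated order. First I would record that $L/F$ is totally ramified: since $[L:F]=p$ is prime we have $e(L/F)\in\{1,p\}$, and comparing the factorizations $e(L/\Qp)=e(L/F)\,e(F/\Qp)=e(L/E)\,e(E/\Qp)$ with $e(E/\Qp)=p$ and $e(F/\Qp)\le 2$ rules out $e(L/F)=1$, so $e(L/F)=p$. This is exactly the hypothesis needed to invoke the theorem with base field $K=F$.

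The two numerical inputs the theorem requires are the ramification number $t$ of $L/F$ and the ramification index $e(F/\Qp)$, and I would read both off directly from the classification and the tables in Section \ref{secdihedral}. Concretely, $t=1$ whenever $p>3$, while for $p=3$ one has $t=3$ in the radical cases and $t=1$ in the weakly ramified cases; likewise $e(F/\Qp)=2$ exactly when $F/\Qp$ is ramified (the first two Amano families together with the $p=3$ radical family) and $e(F/\Qp)=1$ when $F/\Qp$ is unramified (the polynomial $x^p+px^{p-1}+p$). With these values in hand the proof reduces to checking, family by family, which branch of the theorem applies and that the relevant freeness condition holds.

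Carrying out that check, in the $p=3$ radical situation one has $t=3=p$, so $p\mid t$ and branch (1) yields freeness at once. In every remaining case $t=1<p$, hence $p\nmid t$ and the Euclidean division gives $a=t=1$. When $e(F/\Qp)=2$ the threshold $\tfrac{pe(F/\Qp)}{p-1}-1=\tfrac{p+1}{p-1}$ exceeds $t=1$, so we land in branch (2)(i), where the condition $a\mid(p-1)$ holds trivially because $a=1$. When $e(F/\Qp)=1$ the threshold $\tfrac{p}{p-1}-1=\tfrac{1}{p-1}$ is smaller than $t=1\le\tfrac{p}{p-1}$, so we land in branch (2)(ii), where the continued fraction expansion of $t/p=1/p$ is trivial and hence of length at most $4$. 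In each instance $\mathcal{O}_L$ turns out to be $\mathfrak{A}_{L/F}$-free.

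I expect the only real subtlety to be bookkeeping at the thresholds: since $t=1$ sits right at the boundary between branches (2)(i) and (2)(ii), one must evaluate $\tfrac{pe(F/\Qp)}{p-1}-1$ carefully and keep track of whether $F/\Qp$ is ramified before choosing which criterion---divisibility of $p-1$ by $a$, or shortness of the continued fraction---to verify. Once $t$ and $e(F/\Qp)$ have been correctly extracted from the tables, no genuine arithmetic obstacle remains.
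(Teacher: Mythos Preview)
Your proposal is correct and follows the same overall strategy as the paper: apply the Bertrandias--Bertrandias--Ferton theorem to the totally ramified cyclic degree $p$ extension $L/F$, extract $t$ and $e(F/\Qp)$ from the tables in Section~\ref{secdihedral}, and run through the cases. The one minor (harmless) divergence is in the $p=3$ radical family: you observe that $t=3=p$ and invoke branch~(1) of the theorem directly, whereas the paper instead computes $\tfrac{3e(F/\Qp)}{2}-1=2$ and $\tfrac{3e(F/\Qp)}{2}=3=t$, lands in branch~(2)(ii), and checks that the continued fraction of $t/p=1$ is trivial. Your route is slightly cleaner there; otherwise the two arguments are essentially identical.
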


\subsection{Descent to the extension $E/\mathbb{Q}_p$}

Under the condition that $E/\mathbb{Q}_p$ and $F/\mathbb{Q}_p$ are arithmetically disjoint (which we recall is equivalent to $L/\mathbb{Q}_p$ not being totally ramified), since $H_1\otimes_KF$ is the classical Galois structure of $L/F$, we can easily prove the $\mathfrak{A}_{H_1}$-freeness of $\mathcal{O}_E$ implies the $\mathfrak{A}_{L/F}$-freeness of $\mathcal{O}_F$ (see \cite[Corollary 5.19]{gilrio}). Actually, Lettl proved that the converse of the corresponding result for the classical Galois structure holds (see \cite[Proposition 1]{lettl}). His proof can be perfectly adapted to prove the converse also for arbitrary Hopf Galois structures: the key step is the application of the Krull-Schmidt-Azumaya theorem (see \cite[(6.12)]{curtisreiner}), which also is possible in this case because $\mathfrak{A}_{E/\mathbb{Q}_p}$ is $\mathbb{Z}_p$-finitely generated and $\mathbb{Z}_p$ is a complete discrete valuation ring. This combined with Corollary \ref{corofreenesstensorf} gives the following:

\begin{coro}\label{corofreenessnontot} Let $E/\mathbb{Q}_p$ be a separable degree $p$ extension of $p$-adic fields with dihedral degree $2p$ normal closure and let $H_1$ be its unique Hopf Galois structure. Then, $\mathcal{O}_E$ is $\mathfrak{A}_{H_1}$-free.
\end{coro}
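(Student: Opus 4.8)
The plan is to deduce the corollary from Corollary \ref{corofreenesstensorf} by a descent argument through the unramified quadratic extension $F/\mathbb{Q}_p$: I will show that the $\mathfrak{A}_{L/F}$-freeness of $\mathcal{O}_L$, which is already available, forces the $\mathfrak{A}_{H_1}$-freeness of $\mathcal{O}_E$. The conceptual backbone, valid precisely because $E/\mathbb{Q}_p$ and $F/\mathbb{Q}_p$ are arithmetically disjoint, is the identification of the base change $H_1\otimes_{\mathbb{Q}_p}F$ with the classical Galois structure of the cyclic extension $L/F$. Under this identification the integral data tensor compatibly, and my first task is to record the two base-change identities $\mathcal{O}_L=\mathcal{O}_E\otimes_{\mathbb{Z}_p}\mathcal{O}_F$ and $\mathfrak{A}_{L/F}=\mathfrak{A}_{H_1}\otimes_{\mathbb{Z}_p}\mathcal{O}_F$. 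The first is essentially the definition of arithmetic disjointness; the second is the nontrivial compatibility statement that the associated order of the tensored Hopf Galois structure is the tensor of the associated orders, which I would justify along the lines of \cite[Corollary 5.19]{gilrio}.

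With these in hand, the ascent direction is immediate: tensoring an isomorphism $\mathcal{O}_E\cong\mathfrak{A}_{H_1}$ with $\mathcal{O}_F$ over $\mathbb{Z}_p$ produces $\mathcal{O}_L\cong\mathfrak{A}_{L/F}$. For the present statement I instead need the converse (descent), and this is where the real work lies. Following Lettl's argument for the classical case \cite[Proposition 1]{lettl}, I would restrict scalars from $\mathfrak{A}_{L/F}=\mathfrak{A}_{H_1}\otimes_{\mathbb{Z}_p}\mathcal{O}_F$ down to $\mathfrak{A}_{H_1}$. Since $F/\mathbb{Q}_p$ is unramified, $\mathcal{O}_F$ is $\mathbb{Z}_p$-free of rank $2$, so tensoring with $\mathcal{O}_F$ over $\mathbb{Z}_p$ is, after restriction of scalars, just taking two copies: as $\mathfrak{A}_{H_1}$-modules one obtains $\mathcal{O}_L\cong\mathcal{O}_E^{\oplus2}$ and $\mathfrak{A}_{L/F}\cong\mathfrak{A}_{H_1}^{\oplus2}$. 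Feeding in the $\mathfrak{A}_{L/F}$-freeness of $\mathcal{O}_L$ from Corollary \ref{corofreenesstensorf}, which gives $\mathcal{O}_L\cong\mathfrak{A}_{L/F}$, I arrive at an isomorphism $\mathcal{O}_E^{\oplus2}\cong\mathfrak{A}_{H_1}^{\oplus2}$ of $\mathfrak{A}_{H_1}$-modules.

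The decisive final step is cancellation. Because $\mathbb{Z}_p$ is a complete discrete valuation ring and $\mathfrak{A}_{H_1}$ is module-finite over $\mathbb{Z}_p$, the Krull--Schmidt--Azumaya theorem \cite[(6.12)]{curtisreiner} applies to finitely generated $\mathfrak{A}_{H_1}$-modules, so their decompositions into indecomposables are unique up to isomorphism and permutation. Cancelling one copy from $\mathcal{O}_E^{\oplus2}\cong\mathfrak{A}_{H_1}^{\oplus2}$ then yields $\mathcal{O}_E\cong\mathfrak{A}_{H_1}$, which is the asserted freeness.

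I expect the main obstacle to be the careful verification that the two base-change identities hold at the level of $\mathfrak{A}_{H_1}$-module structures, and in particular that the associated order really commutes with the unramified base change $-\otimes_{\mathbb{Z}_p}\mathcal{O}_F$; this is the point where arithmetic disjointness is genuinely used, and it is also what must be checked to transport Lettl's classical argument verbatim to the Hopf-algebraic associated order. Once that compatibility is secured, the passage $\mathcal{O}_E^{\oplus2}\cong\mathfrak{A}_{H_1}^{\oplus2}\Rightarrow\mathcal{O}_E\cong\mathfrak{A}_{H_1}$ is a purely formal consequence of Krull--Schmidt--Azumaya and requires no arithmetic input.
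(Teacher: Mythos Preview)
Your proposal is correct and follows essentially the same approach as the paper. The paper's argument (given in the paragraph preceding the corollary) invokes the identification $H_1\otimes_{\mathbb{Q}_p}F\cong F[J]$ under arithmetic disjointness, cites \cite[Corollary 5.19]{gilrio} for the ascent direction, and then says that Lettl's converse \cite[Proposition 1]{lettl} adapts to the Hopf-algebraic setting via Krull--Schmidt--Azumaya \cite[(6.12)]{curtisreiner}; you have simply unpacked what that adaptation amounts to (restrict scalars along $\mathfrak{A}_{H_1}\hookrightarrow\mathfrak{A}_{H_1}\otimes_{\mathbb{Z}_p}\mathcal{O}_F$, use $\mathcal{O}_F\cong\mathbb{Z}_p^{\oplus2}$ to get $\mathcal{O}_E^{\oplus2}\cong\mathfrak{A}_{H_1}^{\oplus2}$, then cancel).
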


\subsection{The degree $2p$ extension}

If $L/\mathbb{Q}_p$ is not totally ramified, the arithmetic disjointness of $E/\mathbb{Q}_p$ and $F/\mathbb{Q}_p$ allows us to apply \cite[Theorem 5.11]{gilrio} and \cite[Theorem 5.16]{gilrio}, obtaining:

\begin{coro}\label{corounramified} Let $L/\mathbb{Q}_p$ be a non-totally ramified degree $2p$ extension of $p$-adic fields and let $H=H_1\otimes_{\mathbb{Q}_p}H_2$ be a Hopf Galois structure of cyclic type on $L/\mathbb{Q}_p$. Then:
\begin{itemize}
    \item[1.] $\mathfrak{A}_H=\mathfrak{A}_{H_1}\otimes_{\Zp}\mathfrak{A}_{H_2}$.
    \item[2.] $\mathcal{O}_L$ is $\mathfrak{A}_H$-free.
\end{itemize}
\end{coro}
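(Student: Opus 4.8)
The plan is to derive both assertions directly from the tensor-product formalism for arithmetically disjoint Hopf Galois structures of \cite{gilrio}, feeding in the freeness results already established for the two subextensions. First I would record that the non-totally ramified hypothesis forces the quadratic subextension $F/\Qp$ to be unramified: every degree $p$ subextension $E/\Qp$ is totally ramified, as its Amano defining polynomial is $p$-Eisenstein (Section \ref{secdihedral}), so if $L/\Qp$ is not totally ramified the ramification comes entirely from $E$, leaving $F/\Qp$ unramified. Consequently $E/\Qp$ (totally ramified) and $F/\Qp$ (unramified, hence tame) are arithmetically disjoint. By the description of the cyclic Hopf Galois structures in Section \ref{sechgstructures}, $H$ is induced, $H=H_1\otimes_{\Qp}H_2$, where $H_1$ is the unique Hopf Galois structure of a degree $p$ subextension $E/\Qp$ and $H_2$ is the classical structure of $F/\Qp$.

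For the first assertion I would invoke the computation of the associated order of an induced structure in the arithmetically disjoint setting, \cite[Theorem 5.11]{gilrio}. The arithmetic disjointness just established is precisely the hypothesis of that theorem, and it yields the factorization $\mathfrak{A}_H=\mathfrak{A}_{H_1}\otimes_{\Zp}\mathfrak{A}_{H_2}$.

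For the second assertion I would apply the transfer of freeness through the tensor product, \cite[Theorem 5.16]{gilrio}, which under arithmetic disjointness reduces the $\mathfrak{A}_H$-freeness of $\OL$ to the simultaneous freeness of the two factors. The first input, that $\ROE$ is $\mathfrak{A}_{H_1}$-free, is Corollary \ref{corofreenessnontot}. The second input, that $\OF$ is $\mathfrak{A}_{H_2}$-free, follows from Theorem \ref{teoquadreasy}: since $p$ is odd we have $2\in\Zp^*$, and writing $F=\Qp(z)$ with $z^2\in\Qp$, the fact that $F/\Qp$ is unramified guarantees that $\{1,z\}$ can be taken as an integral basis, so that result gives $\mathfrak{A}_{H_2}=\Zp[\Gal(F/\Qp)]$ together with the $\mathfrak{A}_{H_2}$-freeness of $\OF$. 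Combining the two inputs through \cite[Theorem 5.16]{gilrio} then gives that $\OL$ is $\mathfrak{A}_H$-free, with a free generator obtained from those of the factors.

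The genuine content sits inside the two cited theorems of \cite{gilrio} and in Corollary \ref{corofreenessnontot}; once these are available the corollary is an assembly rather than an independent argument. Accordingly, the only point requiring care is the verification that the hypotheses of those theorems really hold here, namely the arithmetic disjointness of $E/\Qp$ and $F/\Qp$ and the identification of $H_2$ with the classical structure to which Theorem \ref{teoquadreasy} applies; both are immediate from the ramification data tabulated in Section \ref{secdihedral}. I would therefore expect no serious obstacle beyond this bookkeeping.
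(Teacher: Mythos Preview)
Your proposal is correct and follows essentially the same route as the paper: establish arithmetic disjointness of $E/\Qp$ and $F/\Qp$ from the non-totally ramified hypothesis, then invoke \cite[Theorem 5.11]{gilrio} for part 1 and \cite[Theorem 5.16]{gilrio} for part 2. The paper is slightly terser, simply citing the two theorems after noting arithmetic disjointness, while you spell out the freeness inputs (Corollary \ref{corofreenessnontot} and Theorem \ref{teoquadreasy}) required by the second theorem; this is just added detail, not a different argument.
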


This result closes completely the problem of the freeness, but it does not give the exact form of the associated order as long as we lack a description of $\mathfrak{A}_{H_1}$. In the following sections we obtain such a description and study the freeness in the totally ramified cases.

\section{The case of a separable degree $p$ extension with dihedral Galois closure}\label{secppart}

Recall that the Hopf Galois structures of cyclic type of a dihedral degree $2p$ extension $L/\mathbb{Q}_p$ are the induced ones. When we work with induced Hopf Galois structures we know that $H=H_1\otimes H_2$, where $H_1$ is a Hopf Galois structure of $E/\Q_p$ and $H_2$ is a Hopf Galois structure of $F/\Q_p$. In order to determine a basis of $H$, we have worked with the factors and joined the information obtained. 

We can proceed in an analog way to study the associated order $\mathfrak{A}_H$ and the structure of $\mathcal{O}_L$ as $\mathfrak{A}_H$-module. Namely, we consider the same problem for the subextensions of a dihedral degree $2p$ extension, i.e. the quadratic subextension and the degree $p$ ones, and use this information to study the dihedral degree $2p$ case. As for the quadratic extension $F/\mathbb{Q}_p$, we know by Theorem \ref{teoquadreasy} that $\mathfrak{A}_{H_2}=\mathcal{O}_K[\eta]$, where, with the notation of Section \ref{sechgstructures}, $\eta=\eta_i$ for some $i$. In this section we consider the $p$-part, that is, we work with a separable degree $p$ extension $E/\mathbb{Q}_p$ whose Galois closure is a dihedral degree $2p$ extension.

\subsection{The general method}

We know that $E=\mathbb{Q}_p(\alpha)$ for a root $\alpha$ of one of the polynomials $f$ at the beginning of Section \ref{secdihedral}.

The extension $E/\Q_p$ has a unique Hopf Galois structure $H_1$ with a $\Q_p$-basis as in Proposition \ref{propbasisppart}.
We will need to assume that $\{1,z\}$ is an integral basis of $F=\mathbb{Q}_p(z)$.
On the other hand, we know that $\{1,\alpha,\dots,\alpha^{p-1}\}$ is a $\Zp$-basis of $\mathcal{O}_E$. In these bases the action of $H_1$ over $E$ can be expressed in terms of Lucas sequences. 

Since we have to deal with conjugates of $\alpha$ and their powers, first we group the roots of $f$ in a convenient way. Namely, since $\alpha$ is a root of $f$, there is a polynomial $f_1\in E[x]$ such that $f(x)=(x-\alpha)f_1(x).$ 
Now, since $L$ is the normal closure of $E/\Q_p$ and $[L:E]=2$, $$f_1(x)=\prod_{i=1}^{\frac{p-1}{2}}P_i(x)=\prod_{i=1}^{\frac{p-1}{2}}(x^2-A_ix+B_i),$$ 
with $A_i,B_i\in E$ for every $1\leq i\leq\frac{p-1}{2}$. Let us call $d_i=A_i^2-4B_i\in E$ the discriminant of $P_i$. Then, the roots of $P_i$ are
\begin{align*}
    \alpha_{2,i}=\frac{A_i+\sqrt{d_i}}{2}, && \alpha_{3,i}=\frac{A_i-\sqrt{d_i}}{2}.
\end{align*}
We can assume without loss of generality that $r^{-i}(\alpha)=\alpha_{2,i}$ for every $1\leq i\leq \frac{p-1}{2}$ (otherwise we would reorder the polynomials $P_i$). Then, $$\alpha_{3,i}=s(\alpha_{2,i})=sr^{-i}(\alpha)=r^is(\alpha)=r^i(\alpha).$$ Namely, the roots of $P_i$ are $r^{-i}(\alpha)$ and $r^i(\alpha)$. To compute the action of the elements of the basis of $H_1$, we have to deal with sums and differences of each pair of roots of $P_i$. The suitable tool to deal with such objects are Lucas sequences.

\begin{defi} Let $K$ be a field and let $A,B\in K$. The Lucas sequences of first kind $U_j(A,B)$ and of second kind $V_j(A,B)$ for the parameters $A,B$ are defined by the expressions \begin{align*}
    &U_0(A,B)=0,\\
    &U_1(A,B)=1,\\
    &U_j(A,B)=AU_{j-1}(A,B)-BU_{j-2}(A,B),\,j\geq2,
\end{align*}
\begin{align*}
    &V_0(A,B)=2,\\
    &V_1(A,B)=A,\\
    &V_j(A,B)=AV_{j-1}(A,B)-BV_{j-2}(A,B),\,j\geq2.
\end{align*}
\end{defi}

The characteristic polynomial of $U_j(A,B)$ and $V_j(A,B)$ is defined as $P(x)=x^2-Ax+B$. The relationship of the sequences with the roots of the polynomial is given by the following result,
whose proof is straightforward by induction on $j$.

\begin{prop} Assume that the discriminant $d=A^2-4B$ of $P$ is non-zero and let $\alpha_2=\frac{A+\sqrt{d}}{2}$ and $\alpha_3=\frac{A-\sqrt{d}}{2}$ be its roots. Then, $$U_j(A,B)=\frac{\alpha_2^j-\alpha_3^j}{\sqrt{d}}, \quad V_j(A,B)=\alpha_2^j+\alpha_3^j.$$
\end{prop}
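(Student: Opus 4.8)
The plan is to verify that the closed-form expressions satisfy the same initial conditions and the same recurrence as the Lucas sequences, and then conclude by a two-step induction on $j$. Set $\widetilde{U}_j = (\alpha_2^j - \alpha_3^j)/\sqrt{d}$ and $\widetilde{V}_j = \alpha_2^j + \alpha_3^j$; I want to show $\widetilde{U}_j = U_j(A,B)$ and $\widetilde{V}_j = V_j(A,B)$ for all $j \geq 0$.

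First I would record the two consequences of $\alpha_2,\alpha_3$ being the roots of $P(x) = x^2 - Ax + B$. By Vieta's formulas, $\alpha_2 + \alpha_3 = A$ and $\alpha_2\alpha_3 = B$; moreover $(\alpha_2 - \alpha_3)^2 = (\alpha_2 + \alpha_3)^2 - 4\alpha_2\alpha_3 = A^2 - 4B = d$, so $\alpha_2 - \alpha_3 = \sqrt{d}$ for the choice of square root consistent with the given ordering $\alpha_2 = (A + \sqrt{d})/2$. This makes the base cases immediate: $\widetilde{U}_0 = 0$, $\widetilde{U}_1 = (\alpha_2 - \alpha_3)/\sqrt{d} = 1$, $\widetilde{V}_0 = 2$, and $\widetilde{V}_1 = \alpha_2 + \alpha_3 = A$, matching the defining values of $U_0, U_1, V_0, V_1$.

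Next I would check that $\widetilde{U}_j$ and $\widetilde{V}_j$ obey the recurrence $x_j = A x_{j-1} - B x_{j-2}$. The key observation is that each root satisfies its own characteristic equation: from $\alpha_i^2 = A\alpha_i - B$, multiplying by $\alpha_i^{j-2}$ gives $\alpha_i^j = A\alpha_i^{j-1} - B\alpha_i^{j-2}$ for $i\in\{2,3\}$ and $j \geq 2$. Since $\widetilde{U}_j$ and $\widetilde{V}_j$ are fixed linear combinations of the two sequences $(\alpha_2^j)_j$ and $(\alpha_3^j)_j$, they inherit the same linear recurrence. Having matched both the initial conditions and the recurrence, a routine induction on $j$ (invoking the two preceding values at each step) yields $\widetilde{U}_j = U_j(A,B)$ and $\widetilde{V}_j = V_j(A,B)$ for all $j$.

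There is no substantial obstacle here; the only point requiring care is the sign convention for $\sqrt{d}$, which must be chosen consistently with the given expressions for $\alpha_2$ and $\alpha_3$ so that $\alpha_2 - \alpha_3 = +\sqrt{d}$ rather than $-\sqrt{d}$ (otherwise the formula for $U_j$ would acquire a sign). Once that is fixed, the argument is entirely formal, as the statement itself anticipates.
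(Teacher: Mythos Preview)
Your proof is correct and follows exactly the approach the paper indicates: verify the base cases and show the closed-form expressions satisfy the defining recurrence, then conclude by induction on $j$. The paper itself does not write out the argument, stating only that the proof is straightforward by induction on $j$.
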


We compute the action of $H_1$ over $E$ as follows:
\begin{thm} Let us consider the basis $W=\{w_i\}_{i=1}^p$ of $H_1$ as before and the integral basis $B=\{\alpha^i\}_{i=0}^{p-1}$ of $E$. Call $d_i$ the discriminant of $P_i$. Then, for every $1\leq i\leq\frac{p-1}{2}$ and every $0\leq j\leq p-1$, \begin{align*}
    &\langle w_1,\alpha^j\rangle=\alpha^j,
    &\langle w_{1+i},\alpha^j\rangle=U_j(A_i,B_i)\sqrt{d_i}z,
    &&\langle w_{\frac{p+1}{2}+i},\alpha^j\rangle=V_j(A_i,B_i).
\end{align*}
\end{thm}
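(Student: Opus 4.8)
The plan is to compute each Hopf action $\langle w_k, \alpha^j\rangle$ directly by unwinding the definitions, and to recognize the resulting symmetric and antisymmetric expressions in the conjugate roots as Lucas sequences. Recall from the Greither–Pareigis action and the identification $\overline{\mu} = \mu = \lambda(r)$ that $\overline{\mu}^{\,i}$ acts on $E$ through the automorphism $r^{-i}$ (since $n_i^{-1}$ appears in the action formula, and one must track which coset representative goes to $\overline{1_G}$). Concretely, I would first establish the bookkeeping lemma that $\langle \overline{\mu}^{\,i}, \alpha^j\rangle = r^{-i}(\alpha^j) = r^{-i}(\alpha)^j = \alpha_{2,i}^{\,j}$, using the convention $r^{-i}(\alpha) = \alpha_{2,i}$ fixed just before the statement. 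By the same token $\langle \overline{\mu}^{\,-i}, \alpha^j\rangle = r^{i}(\alpha)^j = \alpha_{3,i}^{\,j}$.

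Once this is in hand, the three cases follow by substitution. For $w_1 = \mathrm{Id}$, the action is the identity, giving $\langle w_1,\alpha^j\rangle = \alpha^j$. For $w_{\frac{p+1}{2}+i} = \overline{\mu}^{\,i} + \overline{\mu}^{\,-i}$, linearity of the action yields
\begin{equation*}
\langle w_{\frac{p+1}{2}+i},\alpha^j\rangle = \alpha_{2,i}^{\,j} + \alpha_{3,i}^{\,j} = V_j(A_i,B_i),
\end{equation*}
directly by the preceding proposition identifying $V_j$ with the power sum of the roots of $P_i(x) = x^2 - A_i x + B_i$. For $w_{1+i} = z(\overline{\mu}^{\,i} - \overline{\mu}^{\,-i})$, I pull the scalar $z \in L$ outside the action (it is fixed by the relevant part of the computation and commutes past the $\Qp$-linear map) to get
\begin{equation*}
\langle w_{1+i},\alpha^j\rangle = z\bigl(\alpha_{2,i}^{\,j} - \alpha_{3,i}^{\,j}\bigr) = z\,\sqrt{d_i}\,U_j(A_i,B_i),
\end{equation*}
again invoking the proposition, which expresses $\alpha_{2,i}^{\,j} - \alpha_{3,i}^{\,j} = \sqrt{d_i}\,U_j(A_i,B_i)$.

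The one genuinely delicate point, which I expect to be the main obstacle, is justifying that the two quantities $\sqrt{d_i}$ and $z$ combine correctly and that the scalar $z$ can legitimately be factored through the action. The element $z$ lies in $F \subset L$ but not in $\Qp$, so $w_{1+i}$ is an element of $H_1 = L[N_1]^G$ rather than of $\Qp[N_1]$; one must check that treating $z$ as a coefficient and the group-like part $\overline{\mu}^{\,i} - \overline{\mu}^{\,-i}$ as acting by $r^{\mp i}$ is consistent with the Hopf action formula, and that $\sqrt{d_i} \in E$ pairs with $z$ to land in the correct subspace. Here one uses that $\sqrt{d_i}$ is (up to sign) the antisymmetric combination $\alpha_{2,i} - \alpha_{3,i}$, so the product $\sqrt{d_i}\,z$ is exactly what is needed to make $\langle w_{1+i},\alpha^j\rangle$ an element whose coordinates against the integral basis are controlled. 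Everything else reduces to the recursive definition of the Lucas sequences and the already-established identities, so the remainder is routine.
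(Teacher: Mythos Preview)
Your approach is essentially identical to the paper's: both compute $\langle \mu^{\pm i},\alpha^j\rangle = r^{\mp i}(\alpha)^j = \alpha_{2,i}^j,\alpha_{3,i}^j$ directly from the Greither--Pareigis action and then invoke the Lucas sequence identities for the sum and difference of $j$-th powers. Your worry about factoring $z$ through the action is overcautious---the paper treats this as immediate since the action of $\sum c_n n \in L[N_1]$ on $x$ is by definition $\sum c_n\, n^{-1}(\overline{1_G})(x)$, so the $L$-coefficient $z$ simply comes along for the ride; also note your claim ``$\sqrt{d_i}\in E$'' is a slip (it lies in $L$, and it is the product $\sqrt{d_i}\,z$ that lands in $E$, as the paper observes right after the corollary).
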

\begin{proof}
The first equality is trivial since $w_1=\mathrm{Id}$. Let $1\leq i\leq\frac{p-1}{2}$ and $0\leq j\leq p-1$. Then,
\begin{equation*}
    \begin{split}
       \langle w_{1+i},\alpha^j\rangle&=\langle(\mu^i-\mu^{-i})z,\alpha^j\rangle
       =(r^{-i}(\alpha^j)-r^i(\alpha^j))z\\
       &=(\alpha_{2,i}^j-\alpha_{3,i}^j)z=U_j(A_i,B_i)\sqrt{d_i}z \\
        \langle w_{\frac{p+1}{2}+i},\alpha^j\rangle&=\langle\mu^i+\mu^{-i},\alpha^j\rangle
        =r^{-i}(\alpha^j)+r^i(\alpha^j)\\&=\alpha_{2,i}^j+\alpha_{3,i}^j=V_j(A_i,B_i)
    \end{split}
\end{equation*}
\end{proof}

\begin{coro}\label{corogramppart} Let us call $U_i(P_i)=U_i(A_i,B_i)$ and $V_i(P_i)=V_i(A_i,B_i)$ for every $1\leq i\leq\frac{p-1}{2}$. The Gram matrix of $H_1$ is $$G(H_1,E)=\begin{pmatrix}
1 & \alpha & \cdots & \alpha^{p-1} \\
U_0(P_1)\sqrt{d_1}z & U_1(P_1)\sqrt{d_1}z & \cdots & U_{p-1}(P_1)\sqrt{d_1}z \\
U_0(P_2)\sqrt{d_2}z & U_1(P_2)\sqrt{d_2}z & \cdots & U_{p-1}(P_2)\sqrt{d_2}z \\
\vdots & \vdots & \ddots & \vdots \\
U_0(P_{\frac{p-1}{2}})\sqrt{d_{\frac{p-1}{2}}}z &
U_1(P_{\frac{p-1}{2}})\sqrt{d_{\frac{p-1}{2}}}z & \cdots & U_{p-1}(P_{\frac{p-1}{2}})\sqrt{d_{\frac{p-1}{2}}}z \\
V_0(P_1) & V_1(P_1) & \cdots & V_{p-1}(P_1) \\
V_0(P_2) & V_1(P_2) & \cdots & V_{p-1}(P_2) \\
\vdots & \vdots & \ddots & \vdots \\
V_0(P_{\frac{p-1}{2}}) & V_1(P_{\frac{p-1}{2}}) & \cdots & V_{p-1}(P_{\frac{p-1}{2}})
\end{pmatrix}$$
\end{coro}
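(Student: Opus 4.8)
The plan is to read the matrix off directly from the theorem just proved, using only the definition of the Gram matrix from Section~\ref{firstsubsection}. Recall that $G(H_1,E)$ is the $p\times p$ matrix whose $(i,j)$-entry is $\langle w_i,\gamma_j\rangle$, where $W=\{w_i\}_{i=1}^p$ is the basis of $H_1$ fixed in Proposition~\ref{propbasisppart} and the columns run over the basis $B=\{\alpha^j\}_{j=0}^{p-1}$ of $E$, i.e.\ over the powers $\alpha^0,\alpha^1,\dots,\alpha^{p-1}$. Thus the whole task reduces to sorting the three families of action values supplied by the theorem into rows in the order dictated by the indexing of $W$.

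First I would record the top row: since $w_1=\mathrm{Id}$, the theorem gives $\langle w_1,\alpha^j\rangle=\alpha^j$, which is the row $1,\alpha,\dots,\alpha^{p-1}$. Next, for each $1\le i\le\frac{p-1}{2}$ the generator $w_{1+i}=z(\mu^i-\mu^{-i})$ sits in row $1+i$, and the theorem yields $\langle w_{1+i},\alpha^j\rangle=U_j(A_i,B_i)\sqrt{d_i}\,z$; writing $U_j(P_i)$ for $U_j(A_i,B_i)$, this is precisely the $(1+i)$-th row displayed. Finally, for each $1\le i\le\frac{p-1}{2}$ the generator $w_{\frac{p+1}{2}+i}=\mu^i+\mu^{-i}$ sits in row $\frac{p+1}{2}+i$, and $\langle w_{\frac{p+1}{2}+i},\alpha^j\rangle=V_j(A_i,B_i)=V_j(P_i)$, producing the lower block of $V$-rows.

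Since these three blocks exhaust the $p$ basis vectors $w_1,w_2,\dots,w_p$ in order, and each is evaluated against $\alpha^0,\dots,\alpha^{p-1}$, concatenating them reproduces the asserted matrix. There is no genuine obstacle in this corollary: all the substance is carried by the theorem, and what remains is bookkeeping. The only point demanding a little care is matching the row order to the ordering of the basis $W$ of Proposition~\ref{propbasisppart} --- the identity first, then the $\frac{p-1}{2}$ generators of $U$-type, then the $\frac{p-1}{2}$ generators of $V$-type --- so that the rows of $G(H_1,E)$ come out in exactly the layout claimed.
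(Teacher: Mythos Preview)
Your proposal is correct and matches the paper's own approach: the corollary is stated without a separate proof in the paper because it is an immediate reading of the entries computed in the preceding theorem, arranged according to the definition of the Gram matrix and the ordering of the basis $W$. Your bookkeeping---identity row first, then the $U$-type rows for $w_{1+i}$, then the $V$-type rows for $w_{\frac{p+1}{2}+i}$---is exactly what is intended.
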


Note that the previous result implies that $\sqrt{d_i}z\in E$ for every $1\leq i\leq\frac{p-1}{2}$, and recall that $z$ can be any element such that $\{1,z\}$ is an integral basis of $F$. In practice, what we will usually do is to choose $z$ after computing $\sqrt{d_1}$, so that the expression of $\sqrt{d_1}z$ is convenient enough.

All the previous considerations lead to the following method to compute $G(H_1,E)$:
\begin{itemize}
    \item[1.] Factorize the polynomial $f$ in terms of a root $\alpha$ to compute the polynomials $P_i\in E[x]$, $1\leq i\leq\frac{p-1}{2}$.
    \item[2.] For every $1\leq i\leq\frac{p-1}{2}$,  compute the square root of $d_i$.
    \item[3.] Determine the entries of $G(H_1,E)$ following Corollary \ref{corogramppart}.
\end{itemize}

Once $G(H_1,E)$ is computed, we can determine $M(H_1,E)$ from its entries and use the reduction method to obtain a basis of $\mathfrak{A}_{H_1}$ and determine the freeness of $\mathcal{O}_E$ as $\mathfrak{A}_{H_1}$-module. We still consider the non-totally ramified cases, although we already know that $\mathcal{O}_E$ is indeed $\mathfrak{A}_{H_1}$-free by Corollary \ref{corofreenessnontot}, because that result does not give an explicit generator, and this procedure does.

\subsection{The case $p=3$}

Let $E/\mathbb{Q}_3$ be a separable degree $3$ extension of $3$-adic fields with dihedral degree $6$ normal closure. We know by Section \ref{secdihedral} that $E$ is generated by a root of one of the polynomials $$x^3+3,\quad x^3+12,\quad x^3+21,$$ $$x^3+3x+3,\quad x^3+6x+3,\quad x^3+3x^2+3.$$ We divide these polynomials in three groups. The first three are of the form $x^3+3a$ with $a\in\{1,4,7\}$, and these are the radical cases. The next two polynomials may be expressed by $x^3+3ax+3$ with $a\in\{1,2\}$, while the sixth polynomial is the unique one for which $L/\mathbb{Q}_p$ is not totally ramified. From now on, these will be called the first and second group and the singular case, respectively.

\subsubsection{The action on the $3$-part}

The extension $E/\Q_3$ has a unique Hopf Galois structure $H_1$ with $\Q_3$-basis $$w_1=\mathrm{Id},\quad w_2=z(\mu-\mu^{-1}),\quad w_3=\mu+\mu^{-1}$$ where $\mu=\lambda(r)$ and $z$ is any element of $L$ with square in $\Qp$. Let $f$ denote one of the previous polynomials and let us fix the root $\alpha$ of $f$ such that $E=\Q_3(\alpha)$. We know that $$f(x)=(x-\alpha)P(x)=x^2-Ax+B,$$ with $A,B\in E$. Let $d$ be the discriminant of $f$. According to \ref{corogramppart}, $$G(H_1,E)=\begin{pmatrix}
1 & \alpha & \alpha^2 \\
0 & \sqrt{d}z & A\sqrt{d}z \\
2 & A & A^2-2B
\end{pmatrix},$$ and the matter is to determine $\sqrt{d}z$. 
We obtain:
\begin{center}
\begin{tabular}{ l ||c | c | c }
Polynomial & $A$ & $B$ & $d$ \\
\hline \hline
$x^3+3a$, $a\in\{1,4,7\}$  & $-\alpha$ & $\alpha^2$ & $-3\alpha^2$ \\
\hline
$x^3+3ax+3$, $a\in\{1,2\}$ & $-\alpha$ & $\alpha^2+3a$ & $-3\alpha^2-12a$ \\ \hline
$x^3+3x^2+3$ & $-(\alpha+3)$ & $\alpha^2+3\alpha$ & $-3\alpha^2-6\alpha+9$ \\
\end{tabular}
\end{center}

Note that $d$ is the square of the difference between the two roots of $P$. Hence, in order to determine its square-root, we are free to choose the sign, because each one corresponds to a different ordering of the roots.

For the radical cases, the situation is quite easy. Indeed, if we call $z=\sqrt{-3}$, we have $\sqrt{d}=\alpha z$ and $F=\mathbb{Q}_3(\sqrt{-3})$. Then, $\sqrt{d}z=-3\alpha$.

Let us consider polynomials of the second group. If $a=1$, we need to compute the square root of $-3\alpha^2-12$ in $L$. 
Solving a system of equations, we get 
$$-3\alpha^2-12=-{\frac {12}{13\,}}(\alpha^2-\frac{3}{2}\alpha+2)^2=-{\frac {3}{13}}(2\alpha^2-3\alpha+4)^2.$$ Then, $$\sqrt{d}=\sqrt{\frac{-3}{13}}(2\alpha^2-3\alpha+4),$$ 
(the other choice of sign corresponds to exchange the roots of the quadratic polynomial). Let $z=\sqrt{-39}$. Since $13\equiv 1\,(\mathrm{mod}\,3)$, 
$F=\mathbb{Q}_3(z)=\mathbb{Q}_3(\sqrt{-3})$. Then, $$\sqrt{d}z=-6\alpha^2+9\alpha-12.$$ If $a=2$, we find 
$\sqrt{d}=\sqrt{\dfrac{3}{-41}}(-4\alpha^2+3\alpha-16).$ Let us choose $z=\sqrt{-123}$. 
Then, $F=\mathbb{Q}_3(z)=\mathbb{Q}_3(\sqrt{3})$ and $$\sqrt{d}z=-12\alpha^2+9\alpha-48.$$
Finally, for the singular case, $\sqrt{d}=\sqrt{-\dfrac{1}{7}}(-2\alpha^2-9\alpha-3),$ $z=\sqrt{-7}$ and $$\sqrt{d}z=2\alpha^2+9\alpha+3.$$

Now, it is easy to fill the whole matrix $G(H_1,E)$ at each case. We obtain:

\begin{prop} The Gram matrix of the action of $H_1$ on $E$ is given by:
\begin{itemize}
    \item[1.] If $f(x)=x^3+3a$, $a\in\{1,4,7\}$, 
    $$G(H_1,E)=\begin{pmatrix}
    1 & \alpha & \alpha^2 \\
    0 & -3\alpha & 3\alpha^2 \\
    2 & -\alpha & -\alpha^2
    \end{pmatrix}.$$
    \item[2.] If $f(x)=x^3+3ax+3$, $a\in\{1,2\}$, $$G(H_1,E)=\begin{pmatrix}
    1 & \alpha & \alpha^2 \\
    0 & -6a\alpha^2+9\alpha-12a^2 & -9\alpha^2-6a^2\alpha-18a \\
    2 & -\alpha & -\alpha^2-6a
    \end{pmatrix}.$$
    \item[3.] If $f(x)=x^3+3x^2+3$, 
    $$G(H_1,E)=\begin{pmatrix}
    1 & \alpha & \alpha^2 \\
    0 & 2\alpha^2+9\alpha+3 & -9\alpha^2-30\alpha-3 \\
    2 & -\alpha-3 & -\alpha^2+9
    \end{pmatrix}$$
\end{itemize}
\end{prop}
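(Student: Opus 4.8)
The plan is to specialize the general shape of the Gram matrix recorded just before the statement. For $p=3$ we have $\frac{p-1}{2}=1$, so the product in Corollary \ref{corogramppart} consists of a single factor $P$, and the Lucas values needed reduce to $U_0=0$, $U_1=1$, $U_2=A$ together with $V_0=2$, $V_1=A$, $V_2=A^2-2B$. Consequently, in all three cases
$$G(H_1,E)=\begin{pmatrix}
1 & \alpha & \alpha^2 \\
0 & \sqrt{d}z & A\sqrt{d}z \\
2 & A & A^2-2B
\end{pmatrix},$$
so the proof reduces to inserting the tabulated values of $A$, $B$, $d$ and the element $\sqrt{d}z$ for each group and then rewriting the single entry $A\sqrt{d}z$ as a polynomial of degree at most $2$ in $\alpha$.

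First I would dispose of the first and third rows, which cost nothing: the top row is fixed, while the bottom-row entries $2$, $A$, $A^2-2B$ follow by substituting the tabulated $A$ and $B$. Thus $A^2-2B=-\alpha^2$ for the radical polynomials, $A^2-2B=-\alpha^2-6a$ for $x^3+3ax+3$, and $A^2-2B=-\alpha^2+9$ for $x^3+3x^2+3$. The middle entry $\sqrt{d}z$ of the second row is precisely the element computed in the discussion preceding the statement: $-3\alpha$ in the radical case, $-6a\alpha^2+9\alpha-12a^2$ for the second group, and $2\alpha^2+9\alpha+3$ for the singular polynomial.

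It then remains to compute the last entry $A\sqrt{d}z$. Here I would multiply $A$ by the known $\sqrt{d}z$ and reduce the result modulo the defining relation of $\alpha$, namely $\alpha^3=-3a$ (radical), $\alpha^3=-3a\alpha-3$ (second group), and $\alpha^3=-3\alpha^2-3$ (singular), so that the final expression has degree at most $2$. For instance, in the second group $A\sqrt{d}z=(-\alpha)(-6a\alpha^2+9\alpha-12a^2)=6a\alpha^3-9\alpha^2+12a^2\alpha$, and replacing $\alpha^3$ by $-3a\alpha-3$ yields $-9\alpha^2-6a^2\alpha-18a$; the radical and singular cases are handled identically. Assembling the three rows gives the three matrices claimed.

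The genuinely substantive step is not this bookkeeping but the computation of $\sqrt{d}z$ already carried out before the statement, i.e. exhibiting a square root of $d$ inside $L$. Concretely, for each $d$ one writes $d=c\,g(\alpha)^2$ with $c\in\Q_3$ and $g\in\Q_3[x]$, so that $\sqrt{d}=\sqrt{c}\,g(\alpha)$, and then chooses the generator $z$ of the quadratic subfield $F$ so that $F=\Q_3(\sqrt{c})$; this makes $z\sqrt{c}\in\Q_3$ and hence $\sqrt{d}z\in E$, as noted after Corollary \ref{corogramppart}. The sign ambiguity in $\sqrt{d}$ is immaterial, since flipping it merely interchanges the two roots of $P$. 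I expect the only delicate point to be identifying the scalar $c$ and the field $F$ correctly in the two non-radical groups.
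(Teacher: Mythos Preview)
Your proposal is correct and follows exactly the paper's approach: the paper states the general $3\times 3$ template $G(H_1,E)$ with entries $\sqrt{d}z$, $A\sqrt{d}z$, $A$, $A^2-2B$, computes $A$, $B$, $d$ and $\sqrt{d}z$ case by case in the discussion preceding the proposition, and then simply remarks that ``it is easy to fill the whole matrix $G(H_1,E)$ at each case''. Your explicit reduction of $A\sqrt{d}z$ modulo the minimal polynomial of $\alpha$ is precisely that easy filling, and your identification of the computation of $\sqrt{d}z$ as the only substantive step matches the paper's emphasis.
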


\subsubsection{Basis of $\mathfrak{A}_{H_1}$}\label{basisassocorder3part}

The matrix of the action $M(H_1,E)$ is built from $G(H_1,E)$ as explained in Section \ref{firstsubsection}. According to the cases of the last proposition, the matrix $M(H_1,E)$ is \begin{align*}
    \begin{pmatrix}
1 & 0 & 2 \\
0 & 0 & 0 \\
0 & 0 & 0 \\
0 & 0 & 0 \\
1 & -3 & -1 \\
0 & 0 & 0 \\
0 & 0 & 0 \\
0 & 0 & 0 \\
1 & 3 & -1
\end{pmatrix}, && \begin{pmatrix}
1 & 0 & 2 \\
0 & 0 & 0 \\
0 & 0 & 0 \\
0 & -12a^2 & 0 \\
1 & 9 & -1 \\
0 & -6a & 0 \\
0 & -18a & -6a \\
0 & -6a^2 & 0 \\
1 & -9 & -1
\end{pmatrix}, && \begin{pmatrix}
1 & 0 & 2 \\
0 & 0 & 0 \\
0 & 0 & 0 \\
0 & 3 & -3 \\
1 & 9 & -1 \\
0 & 2 & 0 \\
0 & -3 & 9 \\
0 & 30 & 0 \\
1 & -9 & -1
\end{pmatrix},
\end{align*} respectively. Since the powers of $\alpha$ form an integral basis of $E$, a reduced matrix of $M(H_1,E)$ gives a basis of $\mathfrak{A}_{H_1}$. We compute the Hermite normal form of these matrices over $\mathbb{Z}_3$ 
We obtain the matrix $$\begin{pmatrix}
1 & 0 & -1 \\
0 & 3 & 0 \\
0 & 0 & 3
\end{pmatrix}$$ for the first and second group of polynomials. Then, $\left\lbrace w_1,\dfrac{w_2}{3},\dfrac{w_1+w_3}{3}\right\rbrace$ is a $\mathbb{Z}_3$-basis of $\mathfrak{A}_{H_1}$ in all these cases. For the singular case, the Hermite normal form is $$\begin{pmatrix}
1 & 0 & -1 \\
0 & 1 & 0 \\
0 & 0 & 3
\end{pmatrix}.$$ Then, $\left\lbrace w_1,w_2,\dfrac{w_1+w_3}{3}\right\rbrace$ is a $\mathbb{Z}_3$-basis of $\mathfrak{A}_{H_1}$.

\subsubsection{Freeness over the associated order}

Let $\epsilon=\sum_{i=1}^3\epsilon_i\alpha^{i-1}\in\mathcal{O}_E$. For each polynomial, we compute the determinant $D_{\epsilon}(H_1,E)$ of $M_{\epsilon}(H_1,E)$ in terms of the coordinates $\epsilon_1,\,\epsilon_2,\,\epsilon_3$. By Proposition \ref{criteriafreeness}, $\epsilon$ is a free generator of $\mathcal{O}_E$ as $\mathfrak{A}_{H_1}$-module if and only if $I_W(H_1,E)=v_3(D_{\epsilon}(H_1,E)$. We know the index $I_W(H_1,E)$ at each case from the previous section. Then, it suffices to find $(\epsilon_1,\epsilon_2,\epsilon_3)$ fulfilling the previous equality.

\begin{center}
\begin{tabular}{ l ||c | c  }
Polynomial & $I_W(H,L)$ & $D_{\epsilon}(H_1,E)$  \\
\hline \hline
$x^3+3a$, $a\in\{1,4,7\}$  & $2$ & $18\epsilon_1\epsilon_2\epsilon_3$  \\
\hline
$x^3+3ax+3$, $a\in\{1,2\}$ & $2$ & $-18(a\epsilon_2^2+3\epsilon_2\epsilon_3-a^2\epsilon_3^2)(\epsilon_1-2a\epsilon_3)$  \\ \hline
$x^3+3x^2+3$ & $1$ & $6(\epsilon_2^2-9\epsilon_2\epsilon_3+15\epsilon_3^2)(\epsilon_1-\epsilon_2+3\epsilon_3)$  \\
\end{tabular}
\end{center}

The equality $I_W(H_1,E)=v_3(D_{\epsilon}(H_1,E))$ is achieved:

\begin{itemize}
    \item In the radical cases, for $\epsilon=\epsilon_1+\epsilon_2\alpha+\epsilon_3\alpha^2$.
    \item In the second case, for $\epsilon=1+\alpha$.
    \item In the singular case, for $\epsilon=2-\alpha$.
\end{itemize}

Hence, we obtain:

\begin{prop} If $E/\mathbb{Q}_3$ is a separable degree $3$ extension with dihedral degree $6$ Galois closure and $H_1$ is its unique Hopf Galois structure, then $\mathcal{O}_E$ is $\mathfrak{A}_{H_1}$-free.
\end{prop}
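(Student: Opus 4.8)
The plan is to run the reduction method separately over the six Amano polynomials classified in Section~\ref{secdihedral} and, in each case, to exhibit an explicit element of $\mathcal{O}_E$ meeting the freeness criterion. First I would recall that every such $E$ equals $\mathbb{Q}_3(\alpha)$ for a root $\alpha$ of one of $x^3+3a$ with $a\in\{1,4,7\}$ (the radical group), $x^3+3ax+3$ with $a\in\{1,2\}$ (the second group), or $x^3+3x^2+3$ (the singular case). For each of these the Gram matrix $G(H_1,E)$, and hence the matrix of the action $M(H_1,E)$ assembled from it as in Section~\ref{firstsubsection}, have already been determined in the preceding subsections. Passing to the Hermite normal form of $M(H_1,E)$ over $\mathbb{Z}_3$ simultaneously produces the $\mathbb{Z}_3$-basis of $\mathfrak{A}_{H_1}$ recorded in Section~\ref{basisassocorder3part} and the index $I_W(H_1,E)$, read off as the $3$-adic valuation of the product of the diagonal entries: this gives $I_W(H_1,E)=2$ for the radical and second groups and $I_W(H_1,E)=1$ for the singular case.

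Next I would fix a general integral element $\epsilon=\epsilon_1+\epsilon_2\alpha+\epsilon_3\alpha^2$, form $M_\epsilon(H_1,E)=\sum_{j}\epsilon_j M_j(H_1,E)$, and compute its determinant $D_\epsilon(H_1,E)$, obtaining the factored polynomials in $\epsilon_1,\epsilon_2,\epsilon_3$ tabulated above. By the reformulation of Proposition~\ref{criteriafreeness} following the definition of the index, $\epsilon$ is an $\mathfrak{A}_{H_1}$-free generator of $\mathcal{O}_E$ if and only if $v_3(D_\epsilon(H_1,E))=I_W(H_1,E)$. It then suffices to present, in each group, coordinates $(\epsilon_1,\epsilon_2,\epsilon_3)\in\mathbb{Z}_3^3$ realizing this equality: for the radical group any $\epsilon$ with $\epsilon_1\epsilon_2\epsilon_3\in\mathbb{Z}_3^{\times}$ works since $D_\epsilon=18\epsilon_1\epsilon_2\epsilon_3$, while $\epsilon=1+\alpha$ gives $D_\epsilon=-18a$ for the second group, and an analogous explicit choice settles the singular case.

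The main obstacle is controlling the $3$-adic valuation of the quadratic factors appearing in $D_\epsilon$ for the second group and the singular case, namely $a\epsilon_2^2+3\epsilon_2\epsilon_3-a^2\epsilon_3^2$ and $\epsilon_2^2-9\epsilon_2\epsilon_3+15\epsilon_3^2$, since a careless choice of $\epsilon$ may inflate $v_3(D_\epsilon)$ beyond $I_W$ and fail the criterion. Here I would reduce each relevant factor modulo $3$ and verify that the proposed generator keeps it a unit while absorbing exactly the power of $3$ supplied by the scalar prefactor: for $\epsilon=1+\alpha$ one has $\epsilon_2=1,\epsilon_3=0$, so the quadratic factor reduces to $a\not\equiv0\pmod 3$ and the linear factor $\epsilon_1-2a\epsilon_3$ to $1$, giving $v_3(D_\epsilon)=v_3(18a)=2=I_W$, and the singular case is dispatched by the same mod-$3$ reduction of its quadratic and linear factors. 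Assembling the verified equalities across all six polynomials then establishes the freeness of $\mathcal{O}_E$ over $\mathfrak{A}_{H_1}$ uniformly, completing the proof.
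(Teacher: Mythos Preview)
Your proposal is correct and follows essentially the same route as the paper: compute $M(H_1,E)$ and its Hermite normal form over $\mathbb{Z}_3$ to read off $I_W(H_1,E)$, factor $D_\epsilon(H_1,E)$ as a polynomial in the $\epsilon_i$, and then exhibit for each family an explicit $\epsilon$ with $v_3(D_\epsilon)=I_W$. The only point where you remain vaguer than the paper is the singular case, where the paper names a concrete generator; your mod-$3$ analysis of the factors $\epsilon_2^2-9\epsilon_2\epsilon_3+15\epsilon_3^2\equiv\epsilon_2^2$ and $\epsilon_1-\epsilon_2+3\epsilon_3\equiv\epsilon_1-\epsilon_2$ makes it clear that any choice with $\epsilon_2\not\equiv0$ and $\epsilon_1\not\equiv\epsilon_2\pmod 3$ (for instance $\epsilon=\alpha$) works, so you should simply record one.
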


\subsection{The case $p=5$}

Let $E/\mathbb{Q}_5$ be a separable degree $5$ extension of $5$-adic fields with dihedral degree $10$ normal closure. In this case, $E=\mathbb{Q}_5(\alpha)$ for a root $\alpha$ of one of the polynomials $$x^5+15x^2+5,\quad x^5+10x^2+5,\quad x^5+5x^4+5.$$ Again, we call $f$ the polynomial among the previous ones that defines $E/\mathbb{Q}_5$. The procedure is essentially the same as in the case $p=3$, with an important difference: given a root $\alpha$ of $f$, the polynomial $f$ is not irreducible over $\mathbb{Q}(\alpha)$, so the quadratic polynomials $P_1$, $P_2$ in the decomposition of the form $$f(x)=(x-\alpha)P_1(x)P_2(x)$$ have coefficients in $\mathbb{Q}_5(\alpha)$ but not in $\mathbb{Q}(\alpha)$, and we are not able to compute them explicitly. 

In order to overcome this difficulty, we work with a polynomial generating the same field and whose decomposition over $\mathbb{Q}_5(\alpha)$ is also a decomposition over $\mathbb{Q}(\alpha)$. For the $5$-adic field $E$ generated by each of the polynomials $f$ above, we can find in the database \cite{lmfdb} a polynomial $g$ defining a number field whose completion at $5$ is $E$.

\begin{center}
\begin{tabular}{ c ||c | c }
$f$ & $g$ & LMFDB identifier  \\
\hline \hline
$x^5+15x^2+5$  & $x^5-15x^3-10x^2+75x+30$ & 5.1.23765625.1 \\
\hline
$x^5+10x^2+5$ & $x^5-35x^2+50x+20$ & 5.1.34515625.1 \\ \hline
$x^5+5x^4+5$ & $x^5+10x^4+50x^3+125x^2+150x+60$ & 5.1.3515625.1 \\
\end{tabular}
\end{center}

\subsubsection{The action on the $5$-part}\label{sectgrammatrix5}

For each polynomial $g$ defining each of the non-normal degree $5$ extensions of $\mathbb{Q}_5$ above, we find the decomposition $$g(x)=(x-\alpha)P_1(x)P_2(x)$$ for a previously fixed root $\alpha$ of $g$. Now, let us write $P_i(x)=x^2-A_ix+B_i$, $i\in\{1,2\}$ and let $d_i=A_i^2-4B_i$. By Corollary \ref{corogramppart}, $$G(H_1,E)=\begin{pmatrix}
1 & \alpha & \alpha^2 &\alpha^3 & \alpha ^4 \\
0 & \sqrt{d_1}z & A_1\sqrt{d_1}z & (A_1^2-B_1)\sqrt{d_1}z & (A_1^2-2B_1)\sqrt{d_1}z \\
0 & \sqrt{d_2}z & A_2\sqrt{d_2}z & (A_2^2-B_2)\sqrt{d_2}z & (A_2^2-2B_2)\sqrt{d_2}z \\
2 & A_1 & A_1^2-2B_1 & A_1(A_1^2-3B_1) & A_1^4-4A_1^2B_1+2B_1^2 \\
2 & A_2 & A_2^2-2B_2 & A_2(A_2^2-3B_2) & A_2^4-4A_2^2B_2+2B_2^2 \\
\end{pmatrix}.$$ Then, it is enough to determine $\sqrt{d_1}z$ and $\sqrt{d_2}z$.

For the first polynomial, we have $$\sqrt{d_1}=\frac{1}{6}\sqrt{-\frac{3}{65}}(3\alpha^4 + 15\alpha^3 - 25\alpha^2 - 110\alpha - 30).$$  Taking
$z=-\sqrt{-\dfrac {65}3}$ we get
$F=\mathbb{Q}_5(z)=\mathbb{Q}_5(\sqrt{5})$ and $$
\sqrt{d_1}z=\frac{1}{6}(3\alpha^4 + 15\alpha^3 - 25\alpha^2 - 110\alpha - 30).$$  
Then, we find that $$
\sqrt{d_2}z=
\frac{1}{6}(11\alpha^4 + 25\alpha^3 - 75\alpha^2 - 270\alpha - 30).
$$

For the second polynomial, one computes \begin{equation*}
    \sqrt{d_1}=\frac{1}{42}\frac{21}{\sqrt{235}}(\alpha^4+10\alpha^3+20\alpha^2+35\alpha-170).
\end{equation*} 
The element $z=\sqrt{235}$ satisfies $F=\mathbb{Q}_5(z)=\mathbb{Q}_5(\sqrt{10})$ and $$\sqrt{d_1}z=\frac{\alpha^4+10\alpha^3+20\alpha^2+35\alpha-170}{2}.$$ Under this consideration, one finds $$\sqrt{d_2}z=\frac{1}{42}(53\alpha^4+110\alpha^3+260\alpha^2-2195\alpha-190).$$

Finally, for the third polynomial, we have $$\sqrt{d_1}=\frac{1}{22}\sqrt{-\frac{1}{3}}(9\alpha^4+86\alpha^3+402\alpha^2+895\alpha+720).$$
Let $z=\sqrt{-3}$. Then $F=\mathbb{Q}_5(z)$ and $$\sqrt{d_1}z=\frac{1}{22}(9\alpha^4+86\alpha^3+402\alpha^2+895\alpha+720).$$
As for $d_2$, 
$$\sqrt{d_2}z=\frac{1}{22}(7\alpha^4+62\alpha^3+254\alpha^2+415\alpha+120).$$

\subsubsection{Basis of $\mathfrak{A}_{H_1}$}

We compute a basis for the associated order $\mathfrak A_{H_1}$. The matrices of the action for the first, second and third polynomial can be found in \eqref{matrixactionfirst} and \eqref{matrixactionthird}.

The Hermite normal form of $M(H_1,E)$ gives the matrix $D(H_1,E)$
$$
\left( \begin {array}{ccccc} 1&0&0&0&-1\\ 0&1&2&0&0\\ 0&0&5&0&0\\0&0&0&1&-1
\\ 0&0&0&0&5\end {array}
 \right),\ 
\left(\begin {array}{ccccc} 1&0&0&0&-1\\  0&1&-2&0&0
\\  0&0&5&0&0\\  0&0&0&1&-1
\\  0&0&0&0&5\end {array}
\right), \
\left(\begin {array}{ccccc} 1&0&0&0&-1\\  0&1&0&0&0
\\  0&0&1&0&0\\  0&0&0&1&-1
\\  0&0&0&0&5\end {array}
\right)
$$
respectively. 
The columns of the corresponding inverse provide a basis for the respecrive associated order:
$$
\left\lbrace w_1,w_2, \frac{-2w_2+w_3}5,w_4,\frac{w_1+w_4+w_5}{5}\right\rbrace
$$
$$\left\lbrace w_1,w_2,\frac{2w_2+w_3}{5},w_4,\frac{w_1+w_4+w_5}{5}\right\rbrace.$$

$$\left\lbrace w_1,w_2,w_3,w_4,\frac{w_1+w_4+w_5}{5}\right\rbrace.$$

\begin{rmk}\normalfont The element $\dfrac{w_1+w_4+w_5}{5}$ obtained in the three bases is actually $\dfrac{\sum_{i=0}^4\mu^i}{5}$. Likewise, for $p=3$, the element $\dfrac{w_1+w_3}{3}$ was obtained in all the bases in Section \ref{basisassocorder3part}, and it is $\dfrac{\mathrm{Id}+\mu+\mu^2}{3}$. For an arbitrary $p$, the element $\dfrac{\sum_{i=0}^{p-1}\mu^i}{p}$ (which acts on $E$ as $\dfrac{1}{p}$ times the trace map of $J=\langle\sigma\rangle$) always belongs to the associated order $\mathfrak{A}_{H_1}$. Indeed, its action on $\alpha$ gives $\dfrac{\sum_{i=1}^p\alpha_i}{p}$, where $\{\alpha_i\}_{i=1}^p$ are the conjugates of $\alpha$. Working with the symmetric functions of the roots, we see that this is $-1$ for the field defined by the polynomial $x^p+px^{p-1}+p$ and $0$ otherwise.
\end{rmk}

\subsubsection{Freeness over the associated order}

From the above we can compute the associated matrix $M_{\epsilon}(H_1,E)$ of an element
$\epsilon=\epsilon_1+\epsilon_2\alpha+\epsilon_3\alpha^2
+\epsilon_4\alpha^3 +\epsilon_5\alpha^4\in \mathcal O_E$, and its determinant allows us to determine whether $\epsilon$ is or not a free generator of $\mathcal{O}_L$ as $\mathfrak{A}_{H_1}$-module. We have:

\begin{center}
\begin{tabular}{ c ||c | c }
Case & $D_{\epsilon}(H_1,E)$  \\
\hline \hline
1  & $25 q_1(\epsilon_1,\epsilon_2,\epsilon_3,\epsilon_4,\epsilon_5)$ \\
\hline
2 & $50q_2(\epsilon_1,\epsilon_2,\epsilon_3,\epsilon_4,\epsilon_5)$ \\ \hline
3 & $10q_3(\epsilon_1,\epsilon_2,\epsilon_3,\epsilon_4,\epsilon_5)$ \\
\end{tabular}
\end{center}

For each $i\in\{1,2,3\}$, $q_i(\epsilon_1,\epsilon_2,\epsilon_3,\epsilon_4,\epsilon_5)$ is a product of homogeneous polynomials of degree at most $4$ (see \eqref{det5first}, \eqref{det5second} and \eqref{det5third} respectively). One may check that $q_i(1,1,1,1,1)$ is coprime with $5$ for $i\in\{1,2,3\}$. Hence, we obtain:

\begin{prop} If $E/\mathbb{Q}_5$ is a separable degree $5$ extension with dihedral degree $10$ Galois closure and $H_1$ is its unique Hopf Galois structure, then $\mathcal{O}_E$ is $\mathfrak{A}_{H_1}$-free.
\end{prop}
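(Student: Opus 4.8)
The plan is to carry out, for each of the three defining polynomials $f$, the reduction method of Section \ref{secredmethod} exactly as in the case $p=3$, thereby reducing $\mathfrak{A}_{H_1}$-freeness to a single valuation comparison. Concretely, for each $f$ I would first assemble the Gram matrix $G(H_1,E)$ of Corollary \ref{corogramppart}, whose rows encode the action of the basis $w_1=\mathrm{Id}$, $w_{1+i}=z(\mu^i-\mu^{-i})$, $w_{\frac{p+1}{2}+i}=\mu^i+\mu^{-i}$ (with $1\leq i\leq\frac{p-1}{2}$) on the integral basis $\{1,\alpha,\dots,\alpha^4\}$ through the Lucas sequences $U_j(A_i,B_i)$ and $V_j(A_i,B_i)$. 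From $G(H_1,E)$ one then reads off the matrix of the action $M(H_1,E)\in\mathcal{M}_5(\Q_5)$ as described in Section \ref{firstsubsection}.

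The first—and genuinely new—obstacle for $p=5$ is that filling in $G(H_1,E)$ requires the explicit quadratic factors $P_i(x)=x^2-A_ix+B_i$ of $f(x)/(x-\alpha)$ together with the square roots $\sqrt{d_i}z\in E$ of their discriminants. For $p=3$ there was a single quadratic factor defined over $\mathbb{Q}(\alpha)$, but here the two factors $P_1,P_2$ have coefficients only in $\mathbb{Q}_5(\alpha)$ and cannot be written down over $\mathbb{Q}(\alpha)$. My plan to circumvent this is the one signposted in the text: replace $f$ by a global polynomial $g$ taken from \cite{lmfdb} whose $5$-adic completion is the same field $E$ and whose factorization over $\mathbb{Q}_5(\alpha)$ is already defined over $\mathbb{Q}(\alpha)$, so that $A_i,B_i,d_i$ and the roots $\sqrt{d_i}$ become fully explicit. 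Choosing $z$ appropriately—so that $F=\mathbb{Q}_5(z)$ and $\sqrt{d_i}z$ lies in $E$ with a manageable expression—then completes the entries of $G(H_1,E)$. This is where essentially all the computational weight lies; the calculations are bulky and I would carry them out in Maple and relegate them to the Appendix.

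Once $M(H_1,E)$ is in hand, I would compute its Hermite normal form over $\Z_5$: by Theorem \ref{teobasisassocorder} this yields a $\Z_5$-basis of $\mathfrak{A}_{H_1}$, and the index $I_W(H_1,E)$ is the total number of factors of $5$ on the diagonal, which one expects to be $2$ for the first two polynomials and $1$ for the third. Next, for a generic integral element $\epsilon=\sum_{i=1}^5\epsilon_i\alpha^{i-1}$ I would form $M_{\epsilon}(H_1,E)=\sum_{j}\epsilon_j M_j(H_1,E)$ and compute its determinant $D_{\epsilon}(H_1,E)$; by the basis-free reformulation of Proposition \ref{criteriafreeness}, the element $\epsilon$ is an $\mathfrak{A}_{H_1}$-free generator precisely when $v_5(D_{\epsilon}(H_1,E))=I_W(H_1,E)$.

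The final step is to verify that this minimal valuation is actually attained. Since $\mathfrak{A}_{H_1}$ stabilizes $\mathcal{O}_E$ one always has $v_5(D_{\epsilon}(H_1,E))\geq I_W(H_1,E)$, and each determinant will factor as $5^{I_W}\,q_i(\epsilon_1,\dots,\epsilon_5)$ with $q_i\in\Z_5[\epsilon_1,\dots,\epsilon_5]$ a product of low-degree homogeneous forms, so it suffices to exhibit one integral coordinate vector on which $q_i$ is a $5$-adic unit. The natural candidate is $\epsilon=1+\alpha+\alpha^2+\alpha^3+\alpha^4$, and one checks $q_i(1,1,1,1,1)\not\equiv 0\pmod 5$ for each $i$. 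This simultaneously produces an explicit free generator and the promised explicit basis of $\mathfrak{A}_{H_1}$, establishing the freeness of $\mathcal{O}_E$. The crux of the argument remains the second paragraph—obtaining $G(H_1,E)$ over $\mathbb{Q}(\alpha)$ rather than only over $\mathbb{Q}_5(\alpha)$—since everything downstream is mechanical once the Gram matrix is available.
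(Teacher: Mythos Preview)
Your plan is correct and mirrors the paper's own argument essentially step for step: replacing each Amano polynomial by an LMFDB global polynomial so that the quadratic factors $P_1,P_2$ are defined over $\mathbb{Q}(\alpha)$, building $G(H_1,E)$ via the Lucas sequences of Corollary~\ref{corogramppart}, computing the Hermite normal form of $M(H_1,E)$ to obtain $I_W(H_1,E)\in\{2,2,1\}$, and then verifying that $q_i(1,1,1,1,1)$ is a $5$-adic unit so that $\epsilon=1+\alpha+\alpha^2+\alpha^3+\alpha^4$ is a free generator. The only slip is notational---$M(H_1,E)$ is a $25\times 5$ matrix, not an element of $\mathcal{M}_5(\mathbb{Q}_5)$---but this does not affect the argument.
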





\section{Cyclic Hopf Galois module structure}\label{sechopfcyclic}

Let $L/\Qp$ be a dihedral degree $2p$ extension of $p$-adic fields. In this section we address the problem of the structure of $\mathcal{O}_L$ as module over the associated order in the Hopf Galois structures of cyclic type on $L/\mathbb{Q}_p$. As we have seen, those are the induced ones. As in the previous section, we first establish the strategy to study the action on $L$ and we solve completely the cases $p=3$ and $p=5$ afterwards.

\subsection{The general method}

Let us fix a degree $p$ subextension $E/\mathbb{Q}_p$ of $L/\mathbb{Q}_p$ and, as usual, call $F/\mathbb{Q}_p$ the quadratic subextension. We assume that $F/\Q_p$ is ramified, which implies that $L/\Q_p$ is totally ramified. We showed in Section \ref{subsectprimit} that the tensor basis $B=\{1,z,\alpha,\alpha z,\cdots,\alpha^{p-1},\alpha^{p-1}z\}$ is not a $\Zp$-basis of $\OL$ and the powers of the uniformizing parameter $\gamma=\dfrac{z}{\alpha^{\frac{p-1}{2}}}$ form an integral basis of $L$ $$B'=\{1,\gamma,\gamma^2,\dots,\gamma^{2p-1}\}.$$ Since the action of $H$ on $L$ is the tensor product of the actions of $H_1$ on $E$ and of $H_2$ on $F$, it is clear that $$G(H_W,L_B)=G(H_1,E)\otimes G(H_2,F).$$ Besides, we know from Section \ref{firstsubsection} that $G(H_W,L_{B'})=G(H_W,L_B)P_B^{B'}$. Then, in practice, carrying out the product of matrices above, one can compute the Gram matrix where in $L$ we fix an integral basis. However, powers of $\gamma$ greater than $2p-1$ normally appear in the result. In order to reduce them, we need the minimal polynomial of $\gamma$. To this end, we can use the theory of Tschirnhaus transformations, for example.

This yields the following method to compute $M(H_W,L_{B'})$:

\begin{itemize}
    \item[1.] Write the powers of $\gamma$ in the tensor basis $B$ to compute the matrix $P_B^{B'}$.
    \item[2.] Compute the Kronecker product $G(H_1,E)\otimes G(H_2,F)$ and multiply on left side by $P_B^{B'}$, obtaining $G(H_W,L_{B'})$.
    \item[3.] Compute the minimal polynomial of $\gamma$ and use it to find the coordinates of each entry of $G(H_W,L_{B'})$ with respect to $B'$.
    \item[4.] Compute $M(H_W,L_{B'})$ from the entries of $G(H_W,L_{B'})$.
\end{itemize}

Once we have computed $M(H_W,L_{B'})$, since $B'$ is an integral basis of $L$, we can apply the reduction method to compute a basis of $\mathfrak{A}_H$ and determine the $\mathfrak{A}_H$-freeness of $\OL$. As in the previous section, we work with the cases $p=3$ and $p=5$.

\subsection{The case $p=3$}

We can solve the singular case easily. Indeed, we have that $F/\Q_3$ is unramified. Since $\mathcal{O}_E$ is $\mathfrak{A}_{H_1}$-free, we can apply Corollary \ref{corounramified} to obtain that $\mathfrak{A}_H=\mathfrak{A}_{H_1}\otimes_{\Zp}\mathfrak{A}_{H_2}$ and $\mathcal{O}_L$ is $\mathfrak{A}_H$-free. Then, we deal with the totally ramified cases.

\subsubsection*{Change basis matrix}\label{sectchangebasisp=3}

Let $H=H_1\otimes H_2$ be an induced Hopf Galois structure of $L/\mathbb{Q}_p$. We fix as $\mathbb{Q}_3$-basis of $H$ the product of the bases in $H_1$ and $H_2$, that is, $$W\coloneqq\{w_1\eta_1,w_1\eta_2,w_2\eta_1,w_2\eta_2,w_3\eta_1,w_3\eta_2\}.$$

Since $L/\mathbb{Q}_3$ is totally ramified, by Proposition \ref{propunif}, the basis given by the powers of $\dfrac{z}{\alpha}$ is integral, where $z$ is the square root of a non-square in $\mathbb{Z}_3$ and $\alpha$ is a root of $f$. At each case, it is more convenient to choose $\gamma=t\frac{z}{\alpha}$ for a certain $t\in\mathbb{Z}_3^*$. Namely:
\begin{center}
\begin{tabular}{ c ||c | c | c }
Polynomial & $t$ & $z$ & $\gamma$  \\
\hline \hline
$x^3+3a,\,a\in\{1,4,7\}$ & $\sqrt{a}$ & $\sqrt{-3}$ & $-\dfrac{\alpha^2tz}{3a}$ \\ [1.5ex]
\hline
$x^3+3x+3$ & $\sqrt{\frac{1}{13}}$ & $\sqrt{-39}$ & $-\dfrac{(\alpha^2+3)tz}{3}$ \\ [1.5ex]
\hline
$x^3+6x+3$ & $\sqrt{-\frac{1}{41}}$ & $\sqrt{-123}$ & $-\dfrac{(\alpha^2+6)tz}{3}$ \\
\end{tabular}
\end{center}

Let $B'$ be the basis of the powers of $\gamma$. The coordinates of those powers with respect to the tensor basis $B$ determine the change basis matrix from the product basis $B$ to the basis $B'$. We obtain the following:

\begin{prop} The change basis matrix from $B$ to $B'$ is:
\begin{itemize}
    \item[1.] For the polynomials $f(x)=x^3+3a$, $a\in\{1,4,7\}$, $$P_B^{B'}= \left( \begin {array}{cccccc} 1&0&0&0&0&0\\  0&0&0&t
&0&0\\  0&0&1&0&0&0\\  0&0&0&0&0&t
\\  0&0&0&0&1&0\\  0&-\frac{1}{3t}&0
&0&0&0\end {array} 
\right).
$$
    \item[2.] For the polynomial $f(x)=x^3+3x+3$, $$P_B^{B'}=\left(\begin {array}{cccccc} 1&0&-3&0&15&0\\  0&-t&0
&4\,t&0&-18\,t\\  0&0&1&0&-3&0\\  0&0
&0&-t&0&4\,t\\  0&0&-1&0&4&0\\  0&-\frac{t}{3}&0&t&0&-5\,t\end {array}
\right).$$
    \item[3.] For the polynomial $f(x)=x^3+6x+3$, $$P_B^{B'}=\left(\begin {array}{cccccc} 1&0&12&0&156&0\\ \noalign{\medskip}0&-2
\,t&0&-25\,t&0&-324\,t\\ \noalign{\medskip}0&0&-1&0&-12&0
\\ \noalign{\medskip}0&0&0&2\,t&0&25\,t\\ \noalign{\medskip}0&0&2&0&25
&0\\ \noalign{\medskip}0&-\frac{t}{3}&0&-4\,t&0&-52\,t\end {array}
\right).$$
\end{itemize}
\end{prop}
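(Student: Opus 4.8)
The plan is to compute each basis vector $\gamma^k$ of $B'$ for $0\le k\le 5$ directly and read off its coordinates in the tensor basis $B=\{1,z,\alpha,\alpha z,\alpha^2,\alpha^2 z\}$; stacking these coordinate vectors as columns yields $P_B^{B'}$, since by convention the columns of the change of basis matrix are the new basis elements written in the old basis. Because $\gamma=tz\alpha^{-1}$ with $t\in\Z_3^*$ and $z^2\in\Q_3$, every power factors as $\gamma^k=t^kz^k\alpha^{-k}$, so the whole task reduces to two independent pieces: expressing the negative powers $\alpha^{-1},\dots,\alpha^{-5}$ in the $\Q_3$-basis $\{1,\alpha,\alpha^2\}$ of $E$, and collapsing the powers of $z$ (even powers become rational scalars, odd powers carry a single factor $z$).

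First I would use the defining relation $f(\alpha)=0$ to obtain $\alpha^3$, and hence $\alpha^{-1}$, as a $\Q_3$-combination of $1,\alpha,\alpha^2$. For the radical polynomials $x^3+3a$ one has $\alpha^3=-3a$, so $\alpha^{-1}=-\tfrac{\alpha^2}{3a}$ and in fact each $\alpha^{-j}$ is a single scalar multiple of one of $1,\alpha,\alpha^2$; this is what makes the first matrix so sparse. For $x^3+3x+3$ and $x^3+6x+3$ one instead gets $\alpha(\alpha^2+3)=-3$ (resp. $\alpha(\alpha^2+6)=-3$), whence $\alpha^{-1}=-\tfrac{1}{3}(\alpha^2+3)$ (resp. $-\tfrac{1}{3}(\alpha^2+6)$), and the higher inverse powers follow by repeatedly multiplying by $\alpha^{-1}$ and reducing with the cubic relation (e.g.\ $\alpha^4=\alpha\cdot\alpha^3$).

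Next I would substitute these reductions into $\gamma^k=t^kz^k\alpha^{-k}$, using the chosen values $z^2=-3,-39,-123$ together with the matching $t^2=a,\tfrac{1}{13},-\tfrac{1}{41}$ to convert the prefactor $t^kz^k$ into a rational (for even $k$) or a rational multiple of $z$ (for odd $k$). Distributing over the reduced expression for $\alpha^{-k}$ and collecting in $B$ then produces the $(k+1)$-th column. For the radical case the choice $t=\sqrt a$, i.e.\ $t^2=a$, is precisely what cancels the denominators $3a$ and $9a^2$ coming from $\alpha^{-k}$, giving the clean entries $\pm t$ and $\pm\tfrac{1}{3t}$; for the other two cases the values of $t$ and $z$ are arranged so that $t^2z^2=\pm 3$, which keeps all coordinates in $\Z_3$.

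The only genuine difficulty is bookkeeping: the top powers $\gamma^3,\gamma^4,\gamma^5$ in cases 2 and 3 require $\alpha^{-3},\alpha^{-4},\alpha^{-5}$, whose reduction through the cubic relation produces the larger integer entries (for $x^3+3x+3$ one finds, for instance, $\gamma^4=15-3\alpha+4\alpha^2$ and $\gamma^5=tz(-18+4\alpha-5\alpha^2)$, matching the columns displayed, with the analogous but bulkier values for $x^3+6x+3$). These reductions are entirely mechanical but long enough to be error-prone, so I would carry them out with a computer algebra system, as the paper does for its heavier computations, and then simply verify that each $\gamma^k$ has been correctly rewritten in $B$.
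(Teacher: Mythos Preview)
Your proposal is correct and follows exactly the approach the paper takes: the paper simply states that ``the coordinates of those powers with respect to the tensor basis $B$ determine the change basis matrix from the product basis $B$ to the basis $B'$'' and then presents the matrices, so your explicit description of how to reduce $\gamma^k=t^kz^k\alpha^{-k}$ via the cubic relation and the identity $t^2z^2=-3$ is precisely the computation the authors carry out (and your sample values for $\gamma^4$ and $\gamma^5$ in case~2 check against the displayed columns).
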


\subsubsection*{The minimal polynomial of $\gamma$}

To compute the minimal polynomial of $\gamma$ for the radical cases
it is enough to remark that $$\gamma^6=(\gamma^3)^2=(tz)^2=-3a,$$ 
so $\gamma$ is a root of $Y^6+3a$. \\

For polynomials of the second group, we use the resultant. For $a=1$, we have $$\mathrm{Res}_x\left(x^3+3x+3,Y-\left(-\frac{(\alpha^2+3)tz}{3}\right)\right)=Y^3+tzY^2-tz=Y^3+(Y^2t-t)z.$$ This has root $\gamma$ as a polynomial in $Y$. Then, evaluating in $\gamma$ and squaring gives $$Y^6-(Y^2t-t)^2z^2=Y^6+3Y^4-6Y^2+3$$ the minimal polynomial of $\gamma$. For $a=2$, similarly we find the polynomial $$Y^6-12Y^4-12Y^2-3.$$

\subsubsection*{The action on $L/\mathbb{Q}_3$}

For the first three cases, $$G(H_1,E)\otimes G(H_2,F)= \left( \begin {array}{cccccc} 1&z&\alpha&\alpha\,z&{\alpha}^{2}&{
\alpha}^{2}z\\  1&-z&\alpha&-\alpha\,z&{\alpha}^{2}&-
{\alpha}^{2}z\\  0&0&-3\,\alpha&-3\,\alpha\,z&3\,{
\alpha}^{2}&3\,{\alpha}^{2}z\\  0&0&-3\,\alpha&3\,
\alpha\,z&3\,{\alpha}^{2}&-3\,{\alpha}^{2}z\\  2&2\,z
&-\alpha&-\alpha\,z&-{\alpha}^{2}&-{\alpha}^{2}z\\  2
&-2\,z&-\alpha&\alpha\,z&-{\alpha}^{2}&{\alpha}^{2}z\end {array}
 \right),
$$ whence we compute the Gram matrix
\begin{equation*}
    \begin{split}
        G(H_W,L_{B'})&=(G(H_1,E)\otimes G(H_2,F))P_B^{B'}\\&=\left(\begin {array}{cccccc} 1&\gamma&{\gamma}^{2}&{\gamma}^{3}&{
\gamma}^{4}&{\gamma}^{5}\\  1&-\gamma&{\gamma}^{2}&-{
\gamma}^{3}&{\gamma}^{4}&-{\gamma}^{5}\\  0&3\,\gamma
&-3\,{\gamma}^{2}&0&3\,{\gamma}^{4}&-3\,{\gamma}^{5}
\\  0&-3\,\gamma&-3\,{\gamma}^{2}&0&3\,{\gamma}^{4}&3
\,{\gamma}^{5}\\  2&-\gamma&-{\gamma}^{2}&2\,{\gamma}
^{3}&-{\gamma}^{4}&-{\gamma}^{5}\\  2&\gamma&-{\gamma
}^{2}&-2\,{\gamma}^{3}&-{\gamma}^{4}&{\gamma}^{5}\end {array}
\right),
    \end{split}
\end{equation*}
For the fourth and fifth polynomial the matrix $G(H_W,L_{B'})$ is obtained in a completely analogous way and their entries can be checked in \eqref{fourthgrammatrix} and \eqref{fifthgrammatrix} respectively.

\subsubsection*{Basis of $\mathfrak{A}_H$}

From the previous step we compute the matrix $M(H_W,L_{B'})$, and since $B'$ is an integral basis of $L$, reducing this matrix provides a basis of $\mathfrak{A}_H$.

For the radical cases, the Hermite normal form of $M(H_W,L_{B'})$ is $$D=\left( \begin {array}{cccccc} 1&0&0&0&-1&0\\  0&1&0&0
&0&-1\\  0&0&3&0&0&0\\  0&0&0&3&0&0
\\  0&0&0&0&3&0\\  0&0&0&0&0&3
\end {array} \right).
$$ Then, we obtain the basis of the associated order $\mathfrak{A}_H$ $$\left\lbrace w_1\eta_1,w_1\eta_2,\frac{w_2\eta_1}{3},\frac{w_2\eta_2}{3},\frac{w_1\eta_1+w_3\eta_1}{3},\frac{w_1\eta_2+w_3\eta_2}{3}\right\rbrace.$$ We see that $\mathfrak{A}_H=\mathfrak{A}_{H_1}\otimes_{\Z_3}\mathfrak{A}_{H_2}$.

For the second group of polynomials we get  $$D=\left(\begin {array}{cccccc} 1&0&0&0&0&-1\\  0&1&0&0
&0&-1\\  0&0&1&-1&0&0\\  0&0&0&3&0&0
\\  0&0&0&0&1&-1\\  0&0&0&0&0&3
\end {array} 
\right),$$ which gives that the associated order $\mathfrak{A}_H$ has $\mathbb{Z}_3$-basis $$\left\lbrace w_1\eta_1,w_1\eta_2,w_2\eta_1,\frac{w_2}{3}(\eta_1+\eta_2),w_3\eta_1,\frac{w_1+w_3}{3}(\eta_1+\eta_2)\right\rbrace.$$ In this case $\mathfrak{A}_H\neq\mathfrak{A}_{H_1}\otimes_{\mathbb{Z}_3}\mathfrak{A}_{H_2}$ since $\dfrac{w_2}{3}\eta_1\in\mathfrak{A}_{H_1}\otimes_{\mathbb{Z}_3}\mathfrak{A}_{H_2}$ and $\dfrac{w_2}{3}\eta_1\notin\mathfrak{A}_H$.

\subsubsection*{Freeness over the associated order}

Let $\beta=\sum_{i=1}^6\beta_i\gamma^{i-1}\in\mathcal{O}_L$. Then, we compute the determinant $D_{\beta}(H,L)$ of its associated matrix so as to determine whether it is a free generator of $\mathcal{O}_L$ as $\mathfrak{A}_H$-module. We obtain the following:

\begin{center}
\begin{tabular}{ l ||c | c | c }
Polynomial & $I(H,L)$ & $D_{\beta}(H,L)$  \\
\hline \hline
$x^3+3a$, $a\in\{1,4,7\}$  & $4$ & $-2592\beta_1\beta_2\beta_3\beta_4\beta_5\beta_6$  \\
\hline
$x^3+3x+3$ & $2$ & $-288\,q_2(\beta_1,\beta_2,\beta_3,\beta_4,\beta_5,\beta_6)$ \\ \hline
$x^3+6x+3$ & $2$ & $-288\,q_3(\beta_1,\beta_2,\beta_3,\beta_4,\beta_5,\beta_6)$ \\
\end{tabular}
\end{center}
Here $q_2$ and $q_3$ are the products of linear and quadratic forms:
\begin{equation*}
    \begin{split}
        &q_2(\beta_1,\beta_2,\beta_3,\beta_4,\beta_5,\beta_6)=\, \left( 3\,{{\beta_3}}^{2}-23\,{\beta_3}\,{\beta_5}+43\,{{\beta_5}}^{2} \right)  \left( {\beta_2}-6\,{\beta_4}+24\,{\beta_6} \right) \\& \left( {{\beta_2}}^{2}-15\,{\beta_2}\,{\beta_4}+66\,{\beta_6}\,{\beta_2}+27\,{{\beta_4}}^{2}-261\,{\beta_6}\,{\beta_4}+621\,{{\beta_6}}^{2} \right) \left( {\beta_1}-{\beta_3}+7\,{\beta_5} \right), \\
        &q_3(\beta_1,\beta_2,\beta_3,\beta_4,\beta_5,\beta_6)=\,\left(20\,{\beta_3}^{2}+499\,\beta_3\,\beta_5+3112\,{\beta_5}^{2} \right) \left( 2\,\beta_2+27\,\beta_4+348\,\beta_6 \right)\,\\&\Big( 4\,{\beta_2}^{2}+114\,\beta_2\,\beta_4 +1473\,\beta_6\,\beta_2+720\,{\beta_4}^{2}+18684\,\beta_6\,\beta_4+121194\,{\beta_6}^{2} \Big)\\&  \left( \beta_1+4\,\beta_3+56\,\beta_5 \right).
    \end{split}
\end{equation*}

Since $v_3(2592)=4$ and $v_3(288)=2$, we can always find $\beta\in\mathcal{O}_L$ such that $I(H,L)=D_{\beta}(H,L)$: take $\beta=\sum_{i=1}^6\gamma^{i-1}$ for the first case and $\beta=\gamma+\gamma^4$ in the other two. Then:

\begin{prop} If $L/\mathbb{Q}_3$ is a dihedral degree $6$ extension of $3$-adic fields and $H$ is a Hopf Galois structure of cyclic type on $L/\mathbb{Q}_3$, then $\mathcal{O}_L$ is $\mathfrak{A}_H$-free.
\end{prop}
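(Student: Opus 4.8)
The plan is to run through the three totally ramified defining polynomials for $p=3$, namely $x^3+3a$ with $a\in\{1,4,7\}$ together with $x^3+3x+3$ and $x^3+6x+3$, and to dispose of the remaining non-totally ramified polynomial $x^3+3x^2+3$ separately. For this last (singular) case there is nothing new to prove: the quadratic subextension $F/\Q_3$ is unramified, so $E/\Q_3$ and $F/\Q_3$ are arithmetically disjoint, and Corollary \ref{corounramified}, applied together with the already established $\mathfrak{A}_{H_1}$-freeness of $\mathcal{O}_E$, yields both $\mathfrak{A}_H=\mathfrak{A}_{H_1}\otimes_{\Zp}\mathfrak{A}_{H_2}$ and the $\mathfrak{A}_H$-freeness of $\mathcal{O}_L$. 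Since the $p$ cyclic structures are induced from the $p$ conjugate degree $p$ subextensions and carry identical arithmetic, it suffices to treat one induced structure $H=H_1\otimes H_2$ for each defining polynomial.

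For the totally ramified polynomials I would apply the four-step general method verbatim. Fixing the basis $W$ of $H$ built from the bases of $H_1$ and $H_2$ in Corollary \ref{corobasisinduced}, and taking the uniformizer $\gamma=z/\alpha$ provided by Proposition \ref{propunif}, the set $B'=\{1,\gamma,\dots,\gamma^5\}$ is an integral basis of $L$. The Gram matrix in the tensor basis is the Kronecker product $G(H_1,E)\otimes G(H_2,F)$, which is already in hand; multiplying it on the right by the change of basis matrix $P_B^{B'}$ produces $G(H_W,L_{B'})$, after which the higher powers of $\gamma$ appearing in the entries are reduced using the minimal polynomial of $\gamma$. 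For the radical cases this polynomial is simply $Y^6+3a$ (since $\gamma^3=tz$ and $(tz)^2=-3a$), and for the second group it is obtained by a resultant computation, giving $Y^6+3Y^4-6Y^2+3$ and $Y^6-12Y^4-12Y^2-3$. From $G(H_W,L_{B'})$ one reads off $M(H_W,L_{B'})$, and its Hermite normal form over $\Zp$ simultaneously delivers an explicit $\Z_3$-basis of $\mathfrak{A}_H$ and the index $I(H,L)$, the latter being the number of factors $3$ on the diagonal.

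To conclude I would invoke the reformulated freeness criterion following Proposition \ref{criteriafreeness}: for a general $\beta=\sum_{i=1}^6\beta_i\gamma^{i-1}\in\mathcal{O}_L$, compute $D_\beta(H,L)=\det M_\beta(H_W,L_{B'})$ as a polynomial in $\beta_1,\dots,\beta_6$, and exhibit one integral choice with $v_3(D_\beta(H,L))=I(H,L)$. Because the criterion requires only a single such $\beta$, this reduces to locating a point where the non-constant part of $D_\beta$ is a $3$-adic unit. In the radical case $D_\beta=-2592\,\beta_1\beta_2\beta_3\beta_4\beta_5\beta_6$ with $v_3(2592)=4=I(H,L)$, so $\beta=\sum_{i=1}^6\gamma^{i-1}$ is a free generator; in the second group $D_\beta=-288\,q(\beta)$ with $v_3(288)=2=I(H,L)$, where $q$ factors into linear and quadratic forms that one checks are all units at $\beta=\gamma+\gamma^4$.

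The main obstacle is entirely computational rather than conceptual. Determining the minimal polynomial of $\gamma$, carrying out the $6\times6$ Kronecker product and base change, computing the Hermite normal form over $\Z_3$, and above all factoring the six-variable determinant $D_\beta$ into forms are bulky manipulations best delegated to Maple. Once these are done the argument is immediate, since freeness collapses to a single valuation comparison and one need only display a qualifying generator rather than classify all of them.
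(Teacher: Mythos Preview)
Your proposal is correct and follows essentially the same approach as the paper: the singular case is handled via Corollary \ref{corounramified}, and for the totally ramified polynomials you compute $M(H_W,L_{B'})$ in the integral basis of powers of $\gamma$, read off $I(H,L)$ from the Hermite normal form, and exhibit the same explicit generators ($\beta=\sum_{i=1}^6\gamma^{i-1}$ in the radical cases, $\beta=\gamma+\gamma^4$ in the second group) by matching $v_3(D_\beta(H,L))$ with the index. The paper proceeds identically, including the same minimal polynomials, the same determinant factorizations, and the same choices of $\beta$.
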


\subsubsection*{The product of generators}

Let us check that the product $\beta'$ of the generators $\epsilon$ of $\mathcal{O}_E$ as $\mathfrak{A}_{H_1}$-module and $\delta$ of $\mathcal{O}_F$ as $\mathfrak{A}_{H_2}$-module is not a generator of $\mathcal{O}_L$ as $\mathfrak{A}_H$-module.

Such a product is of the form $$\beta'=(\epsilon_1+\epsilon_2\alpha+\epsilon_3\alpha^2)(\delta_1+\delta_2z)=\epsilon_1\delta_1+\epsilon_1\delta_2z+\epsilon_2\delta_1\alpha+\epsilon_2\delta_2\alpha z+\epsilon_3\delta_1\alpha^2+\epsilon_3\delta_2\alpha^2z$$ with $\epsilon_i,\delta_j\in\mathbb{Z}_3$ such that $\epsilon_1\epsilon_2\epsilon_3,\,\delta_1\delta_2\in\mathbb{Z}_3^*$.
Note that this refers to the tensor basis $B$. Then, we apply the matrix $P_{B'}^B$ so as to obtain the coordinates of $\beta'$ with respect to the basis $B'$.
\begin{itemize}
    \item For the radical cases, $$D_{\beta'}(H,L)=7776\,{\frac {{\epsilon_2}^{2}{\delta_2}^{3}{\epsilon_3}^{2}{\delta_1}^{3}
{\epsilon_1}^{2}}{t}}.$$
    \item If $f(x)=x^3+3x+3$, $$D_{\beta'}(H,L)=7776\,{\frac {{\delta_1}^{3} \left( {\epsilon_2}^{2}+3\,\epsilon_2\,\epsilon_3-{\epsilon_3}^{2} \right) ^{2}{\delta_2}^{3} \left( \epsilon_1-2\,\epsilon_3 \right) ^{2}}{{t}^{3}}}.
$$
    \item If $f(x)=x^3+6x+3$, $$D_{\beta'}(H,L)=31104\,{\frac { {{\delta_1}}^{3}\left( {{\epsilon_2}}^{2}+\frac{3}{2}\,{\epsilon_2}\,{\epsilon_3}-2\,{{\epsilon_3}}^{2} \right) ^{2}{{\delta_2}}^{3} \left( {\epsilon_1}-4\,{\epsilon_3} \right) ^{2}}{{t}^{3}}}$$
\end{itemize}

We have that $v_3(D_{\beta'}(H,L)\geq5$ in all cases, so $\beta'$ is not a free generator.

\subsection{The case $p=5$}

Now, we consider a dihedral degree $10$ extension $L/\mathbb{Q}_5$ of $5$-adic fields. Recall that $L$ is the splitting field over $\mathbb{Q}_5$ of one of the polynomials $$x^5+15x^2+5,\quad x^5+10x^2+5,\quad x^5+5x^4+5.$$ We have seen in Section \ref{secdihedral} that the unique case in which $F/\mathbb{Q}_5$ is unramified is the third one. As in the case $p=3$, we have that $\mathfrak{A}_H=\mathfrak{A}_{H_1}\otimes_{\Zp}\mathfrak{A}_{H_2}$ and $\mathcal{O}_L$ is $\mathfrak{A}_H$-free for every induced Hopf Galois structure $H=H_1\otimes_{\Qp}H_2$. 

Hence, throughout this section we consider the first two cases. Recall that we have replaced the first two polynomials by $$x^5-15x^3-10x^2+75x+30,\quad x^5-35x^2+50x+20,$$ respectively.

\subsubsection*{Integral basis of $L$ and minimal polynomial}\label{sectpowersgamma5}

By Proposition \ref{propunif}, the powers of $\dfrac{z}{\alpha^2}$ form an integral basis $B'$ of $L$. Again, we adjust this element by certain $t\in\mathbb{Z}_5^*$. Concretely:

\begin{center}
\begin{tabular}{ c ||c | c | c }
Case & $t$ & $z$ & $\gamma$  \\
\hline \hline
1 & $-\sqrt{-\dfrac{3}{13}}$ & $-\sqrt{-\dfrac {65}3}$ & $\dfrac{1}{5}\, \left( 5\,{\alpha}^{4}-2\,{\alpha}^{3}-75\,{\alpha}^{2}-20\,
\alpha+395 \right) tz$ \\[1.5ex]
\hline
2 & $-\sqrt{-\dfrac{2}{47}}$ & $\sqrt{235}$ & $\dfrac{1}{5}\left( 5\,{\alpha}^{4}-2\,{\alpha}^{3}-175\,\alpha+320 \right) t
z$ \\
\end{tabular}
\end{center}

Proceeding as before, we compute the minimal polynomial of $\gamma$ at each case, which are
$${Y}^{10}-30685\,{Y}^{8}+580960\,{Y}^{6}-5564160\,{Y}^{4}+49766400\,{Y}
^{2}-238878720
$$ and
 $${Y}^{10}-56920\,{Y}^{8}+1844800\,{Y}^{6}-29163520\,{Y}^{4}-209715200\,
{Y}^{2}-671088640.$$
respectively.

\subsubsection*{Basis of $\mathfrak{A}_H$}



For the first polynomial, the Hermite normal form of $M(H_W,L_{B'})$ is $$D=\left(\begin {array}{cccccccccc} 1&0&0&0&0&0&0&0&0&-1
\\  0&1&0&0&0&0&0&0&0&-1\\  0&0&1&0&0
&2&0&0&0&0\\  0&0&0&1&0&2&0&0&0&0
\\  0&0&0&0&1&-1&0&0&0&0\\  0&0&0&0&0
&5&0&0&0&0\\  0&0&0&0&0&0&1&0&0&-1
\\  0&0&0&0&0&0&0&1&0&-1\\  0&0&0&0&0
&0&0&0&1&-1\\  0&0&0&0&0&0&0&0&0&5\end {array}
\right),$$ which gives the basis of $\mathfrak{A}_H$ \begin{equation*}
   \begin{split}\Big\{
        &w_1\eta_1,w_1\eta_2,w_2\eta_1,w_2\eta_2,w_3\eta_1,\frac{-2w_2+w_3}{5}(\eta_1+\eta_2),\\&w_4\eta_1,w_4\eta_2,w_5\eta_1,\frac{w_1+w_4+w_5}{5}(\eta_1+\eta_2)\Big\}.
    \end{split}
\end{equation*}

For the second one, we obtain as Hermite normal form $$D=\left(\begin {array}{cccccccccc} 1&0&0&0&0&0&0&0&0&-1
\\  0&1&0&0&0&0&0&0&0&-1\\  0&0&1&0&0
&-2&0&0&0&0\\  0&0&0&1&0&-2&0&0&0&0
\\  0&0&0&0&1&-1&0&0&0&0\\  0&0&0&0&0
&5&0&0&0&0\\  0&0&0&0&0&0&1&0&0&-1
\\  0&0&0&0&0&0&0&1&0&-1\\  0&0&0&0&0
&0&0&0&1&-1\\  0&0&0&0&0&0&0&0&0&5\end {array}
\right),$$ giving the basis of $\mathfrak{A}_H$ 
\begin{equation*}
   \begin{split}\Big\{
        &w_1\eta_1,w_1\eta_2,w_2\eta_1,w_2\eta_2,w_3\eta_1,\frac{2w_2+w_3}{5}(\eta_1+\eta_2),\\&w_4\eta_1,w_4\eta_2,w_5\eta_1,\frac{w_1+w_4+w_5}{5}(\eta_1+\eta_2)\Big\}.
    \end{split}
\end{equation*} 

In both cases, $\mathfrak{A}_H\neq\mathfrak{A}_{H_1}\otimes_{\mathbb{Z}_5}\mathfrak{A}_{H_2}$.

\subsubsection*{Freeness over $\mathfrak{A}_H$}

Let $\beta=\sum_{i=1}^6\beta_i\gamma^{i-1}\in\mathcal{O}_L$. Using the matrix $M(H_W,L_B)$, 
we can determine the matrix $M_{\beta}(H_W,L_B)$ associated to $\beta$, whose determinant allows us to determine whether or not $\mathcal{O}_L$ is $\mathfrak{A}_H$-free. In both cases, we have $I(H,L)=2$, and if $$\beta=1+\gamma+\gamma^2+\gamma^3+\gamma^4+\gamma^5+\gamma^6+\gamma^7+\gamma^8+\gamma^9,$$ $v_5(D_{\beta}(H,L))=2$, so $\mathcal{O}_L$ is $\mathfrak{A}_H$-free with generator $\beta$.

\subsubsection*{The product of generators}

Let us check if the product of generators for $E$ and $F$ is a generator for $L$. Let $\epsilon=\sum_{i=1}^5\epsilon_i\alpha^{i-1}$ be an $\mathfrak{A}_{H_1}$-generator of $\mathcal{O}_E$ and $\delta=\delta_1+\delta_2z$ an $\mathfrak{A}_{H_2}$-generator of $\mathcal{O}_F$. That product is \begin{equation*}
    \begin{split}
        \beta'=\epsilon_1\delta_1+\epsilon_1\delta_2z+\epsilon_2\delta_1\alpha+\epsilon_2\delta_2\alpha z+\epsilon_3\delta_1\alpha^2+\epsilon_3\delta_2\alpha^2z+\epsilon_4\delta_1\alpha^3+\epsilon_4\delta_2\alpha^3z\\+\epsilon_5\delta_1\alpha^4+\epsilon_5\delta_2\alpha^4z,
    \end{split}
\end{equation*} and applying $P_{B'}^B$ on the column of its coordinates, we obtain its vector of coordinates $(\beta_i')_{i=1}^{10}$ with respect to $B'$. The vectors for each case are shown in \eqref{coordproductfirstpolyn5} and \eqref{coordproductsecondpolyn5}. If we set these coordinates to the previously computed determinant $D_{\beta}(H,L)$, we find that $v_5(D_{\beta'}(H,L))>2$, so $\beta'$ is not a $\mathfrak{A}_H$-generator of $\mathcal{O}_L$. 

\subsection{Summary of results}

We summarize the results we have obtained for the case $p=3$. With the notation used throughout the corresponding section:

\begin{thm}[Associated orders]\label{assocorderdih6}
\begin{itemize}
    \item[1.] For the first three polynomials and the last one, $\mathfrak{A}_H=\mathfrak{A}_{H_1}\otimes_{\mathbb{Z}_3}\mathfrak{A}_{H_2}$.
    \item[2.] For the fourth and the fifth polynomials $\mathfrak{A}_H\neq\mathfrak{A}_{H_1}\otimes_{\mathbb{Z}_3}\mathfrak{A}_{H_2}$
    and a basis of $\mathfrak{A}_H$ is 
    $$\left\lbrace w_1\eta_1,w_1\eta_2,w_2\eta_1,\frac{w_2}{3}(\eta_1+\eta_2),w_3\eta_1,\frac{w_1+w_3}{3}(\eta_1+\eta_2)\right\rbrace.$$
\end{itemize}
\end{thm}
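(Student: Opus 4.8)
The plan is to treat this as a summary that assembles the Hermite normal form computations already carried out in the \emph{Basis of $\mathfrak{A}_H$} paragraphs for $p=3$, reading off each assertion by comparing the computed $\mathbb{Z}_3$-basis of $\mathfrak{A}_H$ against a $\mathbb{Z}_3$-basis of the tensor product order $\mathfrak{A}_{H_1}\otimes_{\mathbb{Z}_3}\mathfrak{A}_{H_2}$. The latter has the explicit basis $\{v_i\eta_j\}$, where $\{v_i\}_{i=1}^3$ is the basis of $\mathfrak{A}_{H_1}$ obtained in Section~\ref{basisassocorder3part} and $\{\eta_1,\eta_2\}$ is the integral basis of $\mathfrak{A}_{H_2}$ supplied by Theorem~\ref{teoquadreasy} (valid since $2\in\mathbb{Z}_3^*$ and $\{1,z\}$ is integral). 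So the whole statement reduces to matching two explicit lattices in $H$.

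The last polynomial $x^3+3x^2+3$ is the quickest: here $F/\mathbb{Q}_3$ is unramified, so $E/\mathbb{Q}_3$ and $F/\mathbb{Q}_3$ are arithmetically disjoint and $L/\mathbb{Q}_3$ is not totally ramified. I would simply invoke Corollary~\ref{corounramified}(1), which gives $\mathfrak{A}_H=\mathfrak{A}_{H_1}\otimes_{\mathbb{Z}_3}\mathfrak{A}_{H_2}$ with no further computation. For the three radical polynomials $x^3+3a$, the computed Hermite normal form yields the basis $\{w_1\eta_1,\,w_1\eta_2,\,\tfrac{w_2}{3}\eta_1,\,\tfrac{w_2}{3}\eta_2,\,\tfrac{w_1+w_3}{3}\eta_1,\,\tfrac{w_1+w_3}{3}\eta_2\}$, and since $\mathfrak{A}_{H_1}$ has basis $\{w_1,\tfrac{w_2}{3},\tfrac{w_1+w_3}{3}\}$ this is exactly the tensor product basis $\{v_i\eta_j\}$; hence the claimed equality holds.

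For the fourth and fifth polynomials $x^3+3ax+3$ ($a\in\{1,2\}$), the outcome reverses: here $\mathfrak{A}_{H_1}$ still has basis $\{w_1,\tfrac{w_2}{3},\tfrac{w_1+w_3}{3}\}$, so $\tfrac{w_2}{3}\eta_1\in\mathfrak{A}_{H_1}\otimes_{\mathbb{Z}_3}\mathfrak{A}_{H_2}$, but the Hermite normal form computation gives the strictly different basis stated in part~(2), in which $\tfrac{w_2}{3}\eta_1$ is not a $\mathbb{Z}_3$-combination of the generators (only $w_2\eta_1$ and $\tfrac{w_2}{3}(\eta_1+\eta_2)$ occur). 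To conclude $\mathfrak{A}_H\neq\mathfrak{A}_{H_1}\otimes_{\mathbb{Z}_3}\mathfrak{A}_{H_2}$ I would exhibit $\tfrac{w_2}{3}\eta_1$ as an explicit witness lying in the tensor product order but not in $\mathfrak{A}_H$, checking the latter either against the computed basis or directly by confirming that $\tfrac{w_2}{3}\eta_1$ fails to send the integral basis $B'$ into $\mathcal{O}_L$.

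The conceptually important point to flag---and the only genuine subtlety in an otherwise bookkeeping argument---is \emph{why} the inclusion $\mathfrak{A}_{H_1}\otimes_{\mathbb{Z}_3}\mathfrak{A}_{H_2}\subseteq\mathfrak{A}_H$ can fail. In the totally ramified cases $\mathcal{O}_L$ strictly contains $\mathcal{O}_E\otimes_{\mathbb{Z}_3}\mathcal{O}_F$, so an element of the tensor product order, which by construction only preserves $\mathcal{O}_E\otimes_{\mathbb{Z}_3}\mathcal{O}_F$, need not preserve all of $\mathcal{O}_L$; this is precisely the phenomenon witnessed by $\tfrac{w_2}{3}\eta_1$. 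All the genuine work sits upstream in producing the Hermite normal forms of $M(H_W,L_{B'})$ (via the Kronecker product $G(H_1,E)\otimes G(H_2,F)$, the change of basis $P_B^{B'}$, and the reduction using the minimal polynomial of $\gamma$); the present theorem only collects and organizes those outputs, so I expect no real obstacle here beyond careful matching of the two lattices.
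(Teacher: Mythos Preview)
Your proposal is correct and follows essentially the same route as the paper: invoke Corollary~\ref{corounramified}(1) for the singular polynomial, match the Hermite-normal-form basis against the tensor basis $\{v_i\eta_j\}$ for the radical cases, and for the fourth and fifth polynomials use exactly the witness $\tfrac{w_2}{3}\eta_1\in\mathfrak{A}_{H_1}\otimes_{\mathbb{Z}_3}\mathfrak{A}_{H_2}\setminus\mathfrak{A}_H$ that the paper exhibits. Your added remark explaining \emph{why} the inclusion can fail in the totally ramified case is a helpful conceptual gloss not made explicit in the paper, but the argument itself is the same.
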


\begin{thm}[Freeness]\label{teofreenessdih6} $\mathcal{O}_L$ is $\mathfrak{A}_H$-free in all cases. For the last polynomial the product of a generator of $\mathcal{O}_E$ as $\mathfrak{A}_{H_1}$-module and a generator of $\mathcal{O}_F$ as $\mathfrak{A}_{H_2}$-module is a generator of $\mathcal{O}_L$ as $\mathfrak{A}_H$-module, while in the rest of the cases such a product is never a generator.
\end{thm}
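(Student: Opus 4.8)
The plan is to read off both assertions from the index form of the freeness criterion, Proposition~\ref{criteriafreeness}: an element $\beta\in\mathcal{O}_L$ is an $\mathfrak{A}_H$-free generator precisely when $v_3(D_\beta(H,L))=I(H,L)$. All the determinants $D_\beta(H,L)$ and indices $I(H,L)$ needed have already been tabulated in the preceding subsections, so the proof is essentially a matter of collecting them and comparing valuations.

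For freeness I would separate the unramified (singular) polynomial $x^3+3x^2+3$ from the five totally ramified ones. In the singular case $F/\mathbb{Q}_3$ is unramified, so $E/\mathbb{Q}_3$ and $F/\mathbb{Q}_3$ are arithmetically disjoint and Corollary~\ref{corounramified} yields simultaneously $\mathfrak{A}_H=\mathfrak{A}_{H_1}\otimes_{\mathbb{Z}_3}\mathfrak{A}_{H_2}$ and the $\mathfrak{A}_H$-freeness of $\mathcal{O}_L$. For the totally ramified polynomials I would simply display the explicit free generators already produced, namely $\beta=\sum_{i=1}^6\gamma^{i-1}$ in the three radical cases (where $v_3(D_\beta(H,L))=4=I(H,L)$) and $\beta=\gamma+\gamma^4$ for $x^3+3x+3$ and $x^3+6x+3$ (where $v_3(D_\beta(H,L))=2=I(H,L)$); by the criterion these are free generators, so $\mathcal{O}_L$ is $\mathfrak{A}_H$-free in every case.

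The statement about products follows the same dichotomy. In the singular case the identifications $\mathfrak{A}_H=\mathfrak{A}_{H_1}\otimes_{\mathbb{Z}_3}\mathfrak{A}_{H_2}$ and $\mathcal{O}_L=\mathcal{O}_E\otimes_{\mathbb{Z}_3}\mathcal{O}_F$ from Corollary~\ref{corounramified} immediately make the product of free generators a free generator, since $\mathfrak{A}_H\cdot(\epsilon\delta)=(\mathfrak{A}_{H_1}\epsilon)\otimes_{\mathbb{Z}_3}(\mathfrak{A}_{H_2}\delta)=\mathcal{O}_E\otimes_{\mathbb{Z}_3}\mathcal{O}_F=\mathcal{O}_L$. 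For the five totally ramified polynomials I would instead quote the formulas for $D_{\beta'}(H,L)$ computed in the subsection on products of generators: the scalar prefactor is $7776$ or $31104$, each of $3$-adic valuation $5$, and the remaining factors are units at $3$ whenever $\epsilon$ and $\delta$ are themselves generators (this is exactly the content of the unit conditions on $\epsilon_i,\delta_j,t$ and on the quadratic and linear forms appearing). Hence $v_3(D_{\beta'}(H,L))=5>I(H,L)$, and the criterion rules out $\beta'$ as a generator.

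The substantive part of the argument is not the comparison of valuations but the earlier determinant computations $D_{\beta'}(H,L)$, which are bulky; the conceptual reason for the failure in the totally ramified cases is that the tensor basis $B$ is no longer integral there, so passing from $\epsilon\delta$ (expressed in $B$) to the uniformizer basis $B'$ introduces an extra factor of $3$, pushing the valuation from the value $I(H,L)\in\{2,4\}$ up to $5$. The one point to verify with care is that for admissible (i.e.\ generator) choices of $\epsilon$ and $\delta$ all the non-scalar factors of $D_{\beta'}$ really are $3$-adic units, so that the strict inequality $v_3(D_{\beta'}(H,L))>I(H,L)$ holds and the product genuinely fails to generate.
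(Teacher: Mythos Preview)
Your proposal is correct and follows essentially the same route as the paper: both parts are read off from the tabulated indices $I(H,L)$ and determinants $D_\beta(H,L)$, $D_{\beta'}(H,L)$ computed in the preceding subsections, with the singular case handled by Corollary~\ref{corounramified}. One small simplification: the ``care'' you flag at the end is not actually needed, since the scalar prefactor alone already gives $v_3(D_{\beta'}(H,L))\geq 5>I(H,L)$ regardless of whether the remaining factors are units; the paper accordingly only asserts $v_3\geq 5$.
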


For $p=5$, we obtain the following results:

\begin{thm}[Associated orders]
\begin{itemize}
    \item[1.] The equality $\mathfrak{A}_H=\mathfrak{A}_{H_1}\otimes_{\mathbb{Z}_5}\mathfrak{A}_{H_2}$ holds only for the third polynomial.
    \item[2.] For the first polynomial, a basis of $\mathfrak{A}_H$ is \begin{equation*}
   \begin{split}\Big\{
        &w_1\eta_1,w_1\eta_2,w_2\eta_1,w_2\eta_2,w_3\eta_1,\frac{-2w_2+w_3}{5}(\eta_1+\eta_2),\\&w_4\eta_1,w_4\eta_2,w_5\eta_1,\frac{w_1+w_4+w_5}{5}(\eta_1+\eta_2)\Big\},
    \end{split}
\end{equation*} while for the second polynomial, a basis is
    \begin{equation*}
    \begin{split}\Big\{
        &w_1\eta_1,w_1\eta_2,w_2\eta_1,w_2\eta_2,w_3\eta_1,\frac{2w_2+w_3}{5}(\eta_1+\eta_2),\\&w_4\eta_1,w_4\eta_2,w_5\eta_1,\frac{w_1+w_4+w_5}{5}(\eta_1+\eta_2)\Big\}.
    \end{split}
\end{equation*}
\end{itemize}
\end{thm}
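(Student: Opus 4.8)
The plan is to prove the two parts by splitting off the non-totally ramified polynomial from the two totally ramified ones, and for the latter by carrying out in full the reduction method set up in Section~\ref{sechopfcyclic}. Throughout, $\mathfrak{A}_{H_2}=\Zp[\eta]$ with $\Zp$-basis $\{\eta_1,\eta_2\}$ by Theorem~\ref{teoquadreasy} (here $2$ is a unit in $\Zp$ and $\{1,z\}$ is integral in each case), and the basis of $\mathfrak{A}_{H_1}$ is the one obtained in the degree $5$ computation.

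For the third polynomial $x^5+5x^4+5$ the quadratic subextension $F/\Qp$ is unramified, so $E/\Qp$ and $F/\Qp$ are arithmetically disjoint and $L/\Qp$ is not totally ramified. In this situation Corollary~\ref{corounramified} applies directly and gives $\mathfrak{A}_H=\mathfrak{A}_{H_1}\otimes_{\Zp}\mathfrak{A}_{H_2}$. This settles the case in which equality holds; it remains to show that it \emph{fails} for the other two polynomials, which will drop out of the explicit computation below.

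For the first two polynomials $F/\Qp$ is ramified, hence $L/\Qp$ is totally ramified, the tensor basis $B$ is no longer integral, and one must pass to the integral basis $B'$ formed by the powers of the uniformizer $\gamma$ of Proposition~\ref{propunif} (adjusted by the unit $t$ of the table). The steps I would follow are: (i) assemble the Kronecker product $G(H_1,E)\otimes G(H_2,F)$ from the square roots $\sqrt{d_i}z$ of Section~\ref{sectgrammatrix5} together with $\mathfrak{A}_{H_2}=\Zp[\eta]$; (ii) multiply on the left by $P_B^{B'}$ and rewrite every entry in $B'$, reducing each power $\gamma^k$ with $k\ge 10$ by means of the minimal polynomial of $\gamma$; (iii) build $M(H_W,L_{B'})$ from this Gram matrix as in Section~\ref{firstsubsection}; and (iv) compute its Hermite normal form $D$ over $\Zp$. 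The two matrices $D$ displayed above are the outcome of this step, and the columns of $D^{-1}$ furnish the stated bases by Theorem~\ref{teobasisassocorder}. To finish part~1 I would argue exactly as for $p=3$: the element $\frac{-2w_2+w_3}{5}\eta_1$ (respectively $\frac{2w_2+w_3}{5}\eta_1$) lies in $\mathfrak{A}_{H_1}\otimes_{\Zp}\mathfrak{A}_{H_2}$, since $\frac{-2w_2+w_3}{5}\in\mathfrak{A}_{H_1}$ by the degree $5$ computation and $\eta_1\in\mathfrak{A}_{H_2}$; but the Hermite normal form shows that only the combination $\frac{-2w_2+w_3}{5}(\eta_1+\eta_2)$ belongs to $\mathfrak{A}_H$, and since $w_3\eta_2$ is not among the basis elements, the summand $\frac{-2w_2+w_3}{5}\eta_1$ cannot be produced integrally from the basis of $\mathfrak{A}_H$. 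Hence $\mathfrak{A}_H\neq\mathfrak{A}_{H_1}\otimes_{\Zp}\mathfrak{A}_{H_2}$ for these two polynomials.

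The conceptual content is light: everything reduces to the reduction method together with Corollary~\ref{corounramified} and Theorem~\ref{teoquadreasy}. The genuine obstacle is computational, handled with Maple. The bottleneck is step~(ii): one must first produce the minimal polynomial of $\gamma$ (via a resultant or Tschirnhaus transformation) and then repeatedly reduce powers $\gamma^k$ with $k\ge 10$ appearing in a $10\times 10$ Gram matrix whose entries live in a degree $10$ field. Once this reduction is carried out, the Hermite normal form of the resulting $100\times 10$ matrix $M(H_W,L_{B'})$ over $\Zp$ is large but entirely mechanical.
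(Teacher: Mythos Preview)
Your proposal is correct and follows essentially the same approach as the paper: Corollary~\ref{corounramified} handles the third polynomial directly, and for the first two you run the reduction method on $M(H_W,L_{B'})$ exactly as in Section~\ref{sechopfcyclic}, reading off the stated bases from the Hermite normal forms and then exhibiting an element of $\mathfrak{A}_{H_1}\otimes_{\Zp}\mathfrak{A}_{H_2}$ not in $\mathfrak{A}_H$ (the paper merely asserts the inequality, so your explicit witness $\frac{\mp 2w_2+w_3}{5}\eta_1$ is in fact a bit more than the paper provides). One small slip: the change of basis is $G(H_W,L_{B'})=G(H_W,L_B)P_B^{B'}$, i.e.\ right multiplication by $P_B^{B'}$, not left.
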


\begin{thm}[Freeness]\label{teofreenessdih10} $\mathcal{O}_L$ is $\mathfrak{A}_H$-free for all cases. Only for the last polynomial the product of a generator of $\mathcal{O}_E$ as $\mathfrak{A}_{H_1}$-module and a generator of $\mathcal{O}_F$ as $\mathfrak{A}_{H_2}$-module is a generator of $\mathcal{O}_L$ as $\mathfrak{A}_H$-module.
\end{thm}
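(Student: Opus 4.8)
The plan is to separate the three polynomials according to whether $L/\Qp$ is totally ramified, exactly as the organization of Section \ref{sechopfcyclic} suggests. The third polynomial $x^5+5x^4+5$ gives the non-totally ramified extension, for which $F/\Qp$ is unramified; here I would simply invoke Corollary \ref{corounramified} to conclude that $\mathfrak{A}_H=\mathfrak{A}_{H_1}\otimes_{\Zp}\mathfrak{A}_{H_2}$ and that $\mathcal{O}_L$ is $\mathfrak{A}_H$-free. For the product-of-generators assertion in this case, I would use that arithmetic disjointness gives $\mathcal{O}_L=\mathcal{O}_E\otimes_{\Zp}\mathcal{O}_F$ as a module over $\mathfrak{A}_H=\mathfrak{A}_{H_1}\otimes_{\Zp}\mathfrak{A}_{H_2}$, so that if $\epsilon$ generates $\mathcal{O}_E$ over $\mathfrak{A}_{H_1}$ and $\delta$ generates $\mathcal{O}_F$ over $\mathfrak{A}_{H_2}$, then $\mathfrak{A}_{H_1}\epsilon\otimes_{\Zp}\mathfrak{A}_{H_2}\delta=\mathfrak{A}_H(\epsilon\delta)$ shows that the product $\epsilon\delta$ is a free generator; this is precisely the content that makes the ``only for the last polynomial'' half of the statement true.

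For the first two polynomials the extension is totally ramified, and I would run the reduction method of Section \ref{secredmethod} with the integral basis $B'=\{1,\gamma,\dots,\gamma^9\}$ furnished by Proposition \ref{propunif}. The computational backbone is already in place: first write the powers of $\gamma$ in the tensor basis $B$ to obtain $P_B^{B'}$, then form the Kronecker product $G(H_1,E)\otimes G(H_2,F)$ and multiply on the left by $P_B^{B'}$ to get $G(H_W,L_{B'})$, reducing powers of $\gamma$ beyond $\gamma^9$ by the minimal polynomial computed in Section \ref{sectpowersgamma5}. From $G(H_W,L_{B'})$ I would assemble $M(H_W,L_{B'})$ as in Section \ref{firstsubsection}, and its Hermite normal form over $\Zp$ yields simultaneously a basis of $\mathfrak{A}_H$ and the index $I(H,L)=2$.

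With the index in hand, freeness reduces to the criterion in the reformulated version of Proposition \ref{criteriafreeness}: an element $\beta=\sum_{i=1}^{10}\beta_i\gamma^{i-1}\in\mathcal{O}_L$ is a free generator if and only if $v_5(\det M_{\beta}(H_W,L_{B'}))=I(H,L)=2$. I would exhibit $\beta=1+\gamma+\cdots+\gamma^9$ and check that $v_5(D_{\beta}(H,L))=2$, which establishes $\mathfrak{A}_H$-freeness of $\mathcal{O}_L$ for both polynomials. To finish the second half of the statement, I would then compute the coordinates in $B'$ of the product $\beta'=\epsilon\delta$ by applying $P_{B'}^B$ to its tensor-basis coordinates, substitute these into the same determinant, and verify that $v_5(D_{\beta'}(H,L))>2$, so that $\beta'$ fails the criterion and is therefore not a generator.

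The main obstacle is computational rather than conceptual: the determinant $D_{\beta}(H,L)$ is that of a $10\times 10$ matrix whose entries are linear forms in the ten variables $\beta_1,\dots,\beta_{10}$, and both the explicit minimal polynomial of $\gamma$ and the change-of-basis matrices $P_B^{B'}$ and $P_{B'}^B$ require careful handling of the uniformizer $\gamma=tz/\alpha^2$. I expect the delicate point to be keeping the valuation bookkeeping correct when reducing high powers of $\gamma$ and when substituting the product coordinates, since the strict inequality $v_5(D_{\beta'})>2$ is exactly what separates these two totally ramified polynomials from the non-totally ramified one; these bulkier steps I would delegate to Maple, as is done elsewhere in the paper.
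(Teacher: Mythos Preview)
Your proposal is correct and mirrors the paper's own argument essentially step for step: Corollary \ref{corounramified} handles the third polynomial (including the product-of-generators assertion via the tensor decomposition), while for the two totally ramified cases the paper likewise builds $M(H_W,L_{B'})$ from the Kronecker product and the change-of-basis matrix, reads off $I(H,L)=2$ from the Hermite normal form, verifies freeness with the explicit element $\beta=1+\gamma+\cdots+\gamma^9$, and then checks $v_5(D_{\beta'}(H,L))>2$ for the product $\beta'=\epsilon\delta$. One small slip: the change-of-basis formula is $G(H_W,L_{B'})=G(H_W,L_B)P_B^{B'}$, i.e.\ right multiplication by $P_B^{B'}$, not left.
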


\section{Conclusions}

Our results refer to the Hopf Galois module structure of not only dihedral degree $2p$ extensions of $\mathbb{Q}_p$, but also their degree $p$ subextensions, that is, separable degree $p$ extensions of $\mathbb{Q}_p$ with dihedral degree $2p$ Galois closure. These extensions are interesting by themselves because they are Hopf Galois non-Galois extensions. Therefore, questions about the Galois module structure of the ring of integers refer necessarily to its Hopf Galois structure. 

A precedent in the study of such extensions can be found in the paper \cite{elder} of Elder, where he deals with what he calls typical degree $p$ extensions: totally ramified degree $p$ extensions $L/K$ of local fields that \textbf{are not} generated by the $p$-th root of an uniformiser of $K$. Our degree $p$ extensions $E/\mathbb{Q}_p$ are typical (except the radical ones), but are not covered by his result \cite[Corollary 3.6]{elder}.

However, that result shows a strong connection between the ramification of the extension and freeness of $\mathcal{O}_L$, which is coherent with our results. For $p\in\{3,5\}$, the two cases $f(x)=x^p+apx^{\frac{p-1}{2}}+p$, $a\in\{2,p-2\}$, have the same ramification invariants, and they present a very similar behaviour: for instance, $I(H_1,E)=2$ for both cases. However, for the singular case, we have $I(H_1,E)=1$. As for the radical cases in $p=3$, we obtained the same associated order as polynomials of the second group, but the determinant $D_{\beta}(H_1,E)$ of the matrix associated to an element $\beta$ was substantially different.

Going back to the dihedral degree $2p$ extensions $L/\mathbb{Q}_p$, the idea behind most of the sections is to make use of induced Hopf Galois structures to recover properties from every pair of a degree $p$ and a quadratic subextension. In the case that these two extensions are arithmetically disjoint (which corresponds to $L/\mathbb{Q}_p$ not being totally ramified), we know that $\mathfrak{A}_H=\mathfrak{A}_{H_1}\otimes_{\mathcal{O}_K}\mathfrak{A}_{H_2}$ and that the freeness of $\mathcal{O}_L$ is implied by the analog property in the subextensions (we could say that induction behaves well with the associated order and the freeness). 

Actually, our results suggest a connection between these concepts stronger than arithmetic disjointness. For instance, for the radical cases, we have that $\mathfrak{A}_H=\mathfrak{A}_{H_1}\otimes_{\mathcal{O}_K}\mathfrak{A}_{H_2}$ even though there is not arithmetic disjointness. However, the product of generators of $\mathcal{O}_E$ and $\mathcal{O}_F$ over their corresponding associated orders is not still a generator of $\mathcal{O}_L$. As for the polynomials $f(x)=x^p+apx^{\frac{p-1}{2}}+p$, $a\in\{2,p-2\}$, $\mathfrak{A}_H\neq\mathfrak{A}_{H_1}\otimes_{\mathbb{Z}_p}\mathfrak{A}_{H_2}$, but the likeness between the bases of $\mathfrak{A}_{H_1}$ and $\mathfrak{A}_{H_2}$ obtained for $p\in\{3,5\}$ suggest another kind of relation.

In light of the results obtained, for a general $p$, it is reasonable to expect freeness over the associated order when the ground field is $\mathbb{Q}_p$, or at least, when $p>3$, the same behaviour for the two totally ramified cases. As for the radical cases, the Galois closure of a radical degree $p$ polynomial is a Frobenius degree $p(p-1)$. Then, the reason why radical polynomials have only appeared for $p=3$ if that it is the unique prime number for which $2p=p(p-1)$. Thus, it seems more likely to expect similar properties for the class of Frobenius extensions of $\mathbb{Q}_p$.

\newpage
\appendix

\section{Complete form of some objects}\label{appendix}

\subsection*{Matrices of the action}

\subsubsection*{$f(x)=x^5+15x^2+5$\qquad\qquad\qquad   $f(x)=x^5+10x^2+5$}

\begin{equation}\label{matrixactionfirst}
\left( \begin {array}{ccccc} 6&0&0&12&12\\ \noalign{\medskip}0&0&0&0&0
\\ \noalign{\medskip}0&0&0&0&0\\ \noalign{\medskip}0&0&0&0&0
\\ \noalign{\medskip}0&0&0&0&0\\ \noalign{\medskip}0&-30&-30&30&-30
\\ \noalign{\medskip}6&-110&-270&-18&12\\ \noalign{\medskip}0&-25&-75&
-11&11\\ \noalign{\medskip}0&15&25&1&-1\\ \noalign{\medskip}0&3&11&1&-
1\\ \noalign{\medskip}0&120&-360&120&60\\ \noalign{\medskip}0&205&-15&
45&-45\\ \noalign{\medskip}6&55&55&7&-13\\ \noalign{\medskip}0&-25&5&-
5&5\\ \noalign{\medskip}0&-10&0&-2&2\\ \noalign{\medskip}0&150&750&210
&-30\\ \noalign{\medskip}0&-245&-1815&-285&285\\ \noalign{\medskip}0&-
160&-700&-110&110\\ \noalign{\medskip}6&60&150&22&-28
\\ \noalign{\medskip}0&15&85&13&-13\\ \noalign{\medskip}0&450&-5250&
1110&-210\\ \noalign{\medskip}0&-1100&-2250&600&-600
\\ \noalign{\medskip}0&-135&255&55&-55\\ \noalign{\medskip}0&85&295&-
65&65\\ \noalign{\medskip}6&-5&65&-23&17\end {array}
 \right),    
\
\left( \begin {array}{ccccc} 42&0&0&84&84\\ \noalign{\medskip}0&0&0&0
&0\\ \noalign{\medskip}0&0&0&0&0\\ \noalign{\medskip}0&0&0&0&0
\\ \noalign{\medskip}0&0&0&0&0\\ \noalign{\medskip}0&-3570&-190&-170&
170\\ \noalign{\medskip}42&735&-2195&-1&-41\\ \noalign{\medskip}0&420&
260&16&-16\\ \noalign{\medskip}0&210&110&10&-10\\ \noalign{\medskip}0&
21&53&1&-1\\ \noalign{\medskip}0&10160&2180&-160&160
\\ \noalign{\medskip}0&-14975&-275&655&-655\\ \noalign{\medskip}42&830
&1040&-106&64\\ \noalign{\medskip}0&440&20&-40&40\\ \noalign{\medskip}0
&485&65&-25&25\\ \noalign{\medskip}0&-3000&13850&3380&1030
\\ \noalign{\medskip}0&29790&17320&-830&830\\ \noalign{\medskip}0&-
3930&-6010&50&-50\\ \noalign{\medskip}42&-1800&-2050&-16&-26
\\ \noalign{\medskip}0&-1020&-730&32&-32\\ \noalign{\medskip}0&-128150
&-15650&-9990&1590\\ \noalign{\medskip}0&34625&-109675&-375&375
\\ \noalign{\medskip}0&15520&15160&540&-540\\ \noalign{\medskip}0&6550
&6850&390&-390\\ \noalign{\medskip}42&235&3205&39&-81\end {array}
\right)
\end{equation}

are the matrices $6M(H_1,E)$ and $42M(H_1,E)$, respectively.
\newpage

\subsubsection*{$f(x)=x^5+5x^4+5$}

\begin{equation}\label{matrixactionthird}
M(H_1,E)=\frac{1}{22} \left(\begin {array}{ccccc} 22&0&0&44&44\\ \noalign{\medskip}0&0&0&0
&0\\ \noalign{\medskip}0&0&0&0&0\\ \noalign{\medskip}0&0&0&0&0
\\ \noalign{\medskip}0&0&0&0&0\\ \noalign{\medskip}0&720&120&-40&-180
\\ \noalign{\medskip}22&895&415&199&-221\\ \noalign{\medskip}0&402&254
&150&-150\\ \noalign{\medskip}0&86&62&38&-38\\ \noalign{\medskip}0&9&7
&5&-5\\ \noalign{\medskip}0&-4410&-330&-830&830\\ \noalign{\medskip}0&
-6025&-1155&-1555&1555\\ \noalign{\medskip}22&-2960&-550&-996&974
\\ \noalign{\medskip}0&-656&-132&-240&240\\ \noalign{\medskip}0&-73&-
11&-31&31\\ \noalign{\medskip}0&11520&-2010&6020&-3270
\\ \noalign{\medskip}0&17950&-3500&6510&-6510\\ \noalign{\medskip}0&
9930&-2830&3550&-3550\\ \noalign{\medskip}22&2300&-670&804&-826
\\ \noalign{\medskip}0&276&-98&100&-100\\ \noalign{\medskip}0&16620&
23040&-19700&6500\\ \noalign{\medskip}0&10535&46845&-11525&11525
\\ \noalign{\medskip}0&-1550&28550&-3730&3730\\ \noalign{\medskip}0&-
950&6690&-590&590\\ \noalign{\medskip}22&-235&805&-51&29\end {array}
 \right)
\end{equation}

\subsection*{Non-constant factors of $D_{\epsilon}(H_1,E)$}

\subsubsection*{$f(x)=x^5+15x^2+5$}

\begin{equation}\label{det5first}
    \begin{split}
        &q_1(\epsilon_1,\epsilon_2,\epsilon_3,\epsilon_4,\epsilon_5)=-\frac{1}{27}\, \Big( 11\,{{\epsilon_2}}^{4}+35\,{{\epsilon_2}}^{3}{\epsilon_3}+215\,{{\epsilon_2}}^{3}{\epsilon_4}+830\,{{\epsilon_2}}^{3}{\epsilon_5}-75\,{{\epsilon_2}}^{2}{{\epsilon_3}}^{2}\\&+795\,{{\epsilon_2}}^{2}{\epsilon_3}\,{\epsilon_4}-465\,{{\epsilon_2}}^{2}{\epsilon_3}\,{\epsilon_5}+1080\,{{\epsilon_2}}^{2}{{\epsilon_4}}^{2}+16485\,{{\epsilon_2}}^{2}{\epsilon_4}\,{\epsilon_5}+9000\,{{\epsilon_2}}^{2}{{\epsilon_5}}^{2}\\&+5\,{\epsilon_2}\,{{\epsilon_3}}^{3}-615\,{\epsilon_2}\,{{\epsilon_3}}^{2}{\epsilon_4}-1650\,{\epsilon_2}\,{{\epsilon_3}}^{2}{\epsilon_5}+4080\,{\epsilon_2}\,{\epsilon_3}\,{{\epsilon_4}}^{2}+8550\,{\epsilon_2}\,{\epsilon_3}\,{\epsilon_4}\,{\epsilon_5}\\&-30075\,{\epsilon_2}\,{\epsilon_3}\,{{\epsilon_5}}^{2}+200\,{\epsilon_2}\,{{\epsilon_4}}^{3}+79650\,{\epsilon_2}\,{{\epsilon_4}}^{2}{\epsilon_5}+240525\,{\epsilon_2}\,{\epsilon_4}\,{{\epsilon_5}}^{2}-112750\,{\epsilon_2}\,{{\epsilon_5}}^{3}\\&+5\,{{\epsilon_3}}^{4}-205\,{{\epsilon_3}}^{3}{\epsilon_4}+725\,{{\epsilon_3}}^{3}{\epsilon_5}+1680\,{{\epsilon_3}}^{2}{{\epsilon_4}}^{2}-24975\,{{\epsilon_3}}^{2}{\epsilon_4}\,{\epsilon_5}+18825\,{{\epsilon_3}}^{2}{{\epsilon_5}}^{2}\\&-3430\,{\epsilon_3}\,{{\epsilon_4}}^{3}+135450\,{\epsilon_3}\,{{\epsilon_4}}^{2}{\epsilon_5}-373725\,{\epsilon_3}\,{\epsilon_4}\,{{\epsilon_5}}^{2}+148625\,{\epsilon_3}\,{{\epsilon_5}}^{3}-1045\,{{\epsilon_4}}^{4}\\&-16600\,{{\epsilon_4}}^{3}{\epsilon_5}+1473450\,{{\epsilon_4}}^{2}{{\epsilon_5}}^{2}-1173625\,{\epsilon_4}\,{{\epsilon_5}}^{3}+210125\,{{\epsilon_5}}^{4} \Big)  \left( {\epsilon_1}+6\,{\epsilon_3}+6\,{\epsilon_4}+30\,{\epsilon_5} \right) .
    \end{split}
\end{equation}

\subsubsection*{$f(x)=x^5+10x^2+5$}

\begin{equation}\label{det5second}
    \begin{split}
        &q_2(\epsilon_1,\epsilon_2,\epsilon_3,\epsilon_4,\epsilon_5)=-\frac{1}{21}\Big( 50\,{{\epsilon_2}}^{4}-750\,{{\epsilon_2}}^{3}{\epsilon_3}-500\,{{\epsilon_2}}^{3}{\epsilon_4}+13000\,{{\epsilon_2}}^{3}{\epsilon_5}\\&-24250\,{{\epsilon_2}}^{2}{{\epsilon_3}}^{2}+99250\,{{\epsilon_2}}^{2}{\epsilon_3}\,{\epsilon_4}-61250\,{{\epsilon_2}}^{2}{\epsilon_3}\,{\epsilon_5}-77500\,{{\epsilon_2}}^{2}{{\epsilon_4}}^{2}-218000\,{{\epsilon_2}}^{2}{\epsilon_4}\,{\epsilon_5}\\&+1137500\,{{\epsilon_2}}^{2}{{\epsilon_5}}^{2}+48750\,{\epsilon_2}\,{{\epsilon_3}}^{3}+21250\,{\epsilon_2}\,{{\epsilon_3}}^{2}{\epsilon_4}-1792500\,{\epsilon_2}\,{{\epsilon_3}}^{2}{\epsilon_5}-597750\,{\epsilon_2}\,{\epsilon_3}\,{{\epsilon_4}}^{2}\\&+7095000\,{\epsilon_2}\,{\epsilon_3}\,{\epsilon_4}\,{\epsilon_5}-1711250\,{\epsilon_2}\,{\epsilon_3}\,{{\epsilon_5}}^{2}+628750\,{\epsilon_2}\,{{\epsilon_4}}^{3}-4655000\,{\epsilon_2}\,{{\epsilon_4}}^{2}{\epsilon_5}\\&-14302500\,{\epsilon_2}\,{\epsilon_4}\,{{\epsilon_5}}^{2}+40375000\,{\epsilon_2}\,{{\epsilon_5}}^{3}-23750\,{{\epsilon_3}}^{4}-97500\,{{\epsilon_3}}^{3}{\epsilon_4}+1756250\,{{\epsilon_3}}^{3}{\epsilon_5}\\&+900000\,{{\epsilon_3}}^{2}{{\epsilon_4}}^{2}-908750\,{{\epsilon_3}}^{2}{\epsilon_4}\,{\epsilon_5}-31756250\,{{\epsilon_3}}^{2}{{\epsilon_5}}^{2}-1651250\,{\epsilon_3}\,{{\epsilon_4}}^{3}\\&-15948750\,{\epsilon_3}\,{{\epsilon_4}}^{2}{\epsilon_5}+125468750\,{\epsilon_3}\,{\epsilon_4}\,{{\epsilon_5}}^{2}-18518750\,{\epsilon_3}\,{{\epsilon_5}}^{3}+907250\,{{\epsilon_4}}^{4}\\&+18321250\,{{\epsilon_4}}^{3}{\epsilon_5}-73877500\,{{\epsilon_4}}^{2}{{\epsilon_5}}^{2}-240725000\,{\epsilon_4}\,{{\epsilon_5}}^{3}+495931250\,{{\epsilon_5}}^{4} \Big)\\&  \left( {\epsilon_1}+21\,{\epsilon_4}-40\,{\epsilon_5} \right) .
    \end{split}
\end{equation}

\subsubsection*{$f(x)=x^5+5x^4+5$}

\begin{equation}\label{det5third}
    \begin{split}
        &q_3(\epsilon_1,\epsilon_2,\epsilon_3,\epsilon_4,\epsilon_5)=\frac{1}{11}({\epsilon_1}-2\,{\epsilon_2}+25\,{\epsilon_4}-120\,{\epsilon_5})\Big({{\epsilon_2}}^{4}-25\,{{\epsilon_2}}^{3}{\epsilon_3}+70\,{{\epsilon_2}}^{3}{\epsilon_4}+50\,{{\epsilon_2}}^{3}{\epsilon_5}\\&+215\,{{\epsilon_2}}^{2}{{\epsilon_3}}^{2}-1035\,{{\epsilon_2}}^{2}{\epsilon_3}\,{\epsilon_4}-1895\,{{\epsilon_2}}^{2}{\epsilon_3}\,{\epsilon_5}+810\,{{\epsilon_2}}^{2}{{\epsilon_4}}^{2}+10050\,{{\epsilon_2}}^{2}{\epsilon_4}\,{\epsilon_5}-13300\,{{\epsilon_2}}^{2}{{\epsilon_5}}^{2}\\&-755\,{\epsilon_2}\,{{\epsilon_3}}^{3}+4585\,{\epsilon_2}\,{{\epsilon_3}}^{2}{\epsilon_4}+15000\,{\epsilon_2}\,{{\epsilon_3}}^{2}{\epsilon_5}-4725\,{\epsilon_2}\,{\epsilon_3}\,{{\epsilon_4}}^{2}-115900\,{\epsilon_2}\,{\epsilon_3}\,{\epsilon_4}\,{\epsilon_5}\\&+70725\,{\epsilon_2}\,{\epsilon_3}\,{{\epsilon_5}}^{2}-3275\,{\epsilon_2}\,{{\epsilon_4}}^{3}+144750\,{\epsilon_2}\,{{\epsilon_4}}^{2}{\epsilon_5}+230050\,{\epsilon_2}\,{\epsilon_4}\,{{\epsilon_5}}^{2}-949250\,{\epsilon_2}\,{{\epsilon_5}}^{3}\\&+895\,{{\epsilon_3}}^{4}-5600\,{{\epsilon_3}}^{3}{\epsilon_4}-32525\,{{\epsilon_3}}^{3}{\epsilon_5}+650\,{{\epsilon_3}}^{2}{{\epsilon_4}}^{2}+293025\,{{\epsilon_3}}^{2}{\epsilon_4}\,{\epsilon_5}+38925\,{{\epsilon_3}}^{2}{{\epsilon_5}}^{2}\\&+30575\,{\epsilon_3}\,{{\epsilon_4}}^{3}-496025\,{\epsilon_3}\,{{\epsilon_4}}^{2}{\epsilon_5}-2641125\,{\epsilon_3}\,{\epsilon_4}\,{{\epsilon_5}}^{2}+4970875\,{\epsilon_3}\,{{\epsilon_5}}^{3}-20225\,{{\epsilon_4}}^{4}\\&-212875\,{{\epsilon_4}}^{3}{\epsilon_5}+6795500\,{{\epsilon_4}}^{2}{{\epsilon_5}}^{2}-10711250\,{\epsilon_4}\,{{\epsilon_5}}^{3}-8010125\,{{\epsilon_5}}^{4}\Big).
    \end{split}
\end{equation}

\subsection*{Gram matrices}

\subsubsection*{$f(x)=x^3+3x+3$}

\begin{equation}\label{fourthgrammatrix}
    G(H_W,L_B)=\begin{pmatrix}
    1 & \gamma & \gamma^2 & \gamma^3 & \gamma^4 & \gamma^5 \\
    1 & -\gamma & \gamma^2 & -\gamma^3 & \gamma^4 & -\gamma^5 \\
    0 & g_{32} & g_{33} & g_{34} & g_{35} & g_{36} \\
    0 & g_{42} & g_{43} & g_{44} & g_{45} & g_{46} \\
    2 & g_{52} & -\gamma^2-3 & g_{54} & -\gamma^4+21 & g_{56} \\
    2 & g_{62} & -\gamma^2-3 & g_{64} & -\gamma^4+21 & g_{66}
    \end{pmatrix}
\end{equation}

$$g_{32}=-\gamma^5-6\gamma^3-12\gamma,\quad g_{33}=18\gamma^4+69\gamma^2-57,\quad g_{34}=-12{\gamma}^{5}-33{\gamma}^{3}+90\gamma,
$$ $$g_{35}=-69{\gamma}^{4}-258{\gamma}^{2}+225,\quad 
g_{36}=45{\gamma}^{5}+114{\gamma}^{3}-396\gamma,$$ $$g_{42}=\gamma^5+6\gamma^3+12\gamma,\quad g_{43}=18\gamma^4+69\gamma^2-57,\quad g_{44}=12{\gamma}^{5}+33{\gamma}^{3}-90\gamma,$$ $$g_{45}=-69{\gamma}^{4}-258{\gamma}^{2}+225,\quad 
g_{46}=-45{\gamma}^{5}-114{\gamma}^{3}+396\gamma,$$ $$g_{52}=-\gamma^5-4\gamma^3+2\gamma,\quad g_{54}=6\gamma^5+23\gamma^3-18\gamma,\quad g_{56}=-25\gamma^5-96\gamma^3+72\gamma,$$ $$g_{62}=\gamma^5+4\gamma^3-2\gamma,\quad g_{64}=-6\gamma^5-23\gamma^3+18\gamma,\quad g_{66}=25\gamma^5+96\gamma^3-72\gamma.$$

\subsubsection*{$f(x)=x^3+6x+3$}

\begin{equation}\label{fifthgrammatrix}
    G(H_W,L_B)=\begin{pmatrix}
    1 & \gamma & \gamma^2 & \gamma^3 & \gamma^4 & \gamma^5 \\
    1 & -\gamma & \gamma^2 & -\gamma^3 & \gamma^4 & -\gamma^5 \\
    0 & g_{32} & g_{33} & g_{34} & g_{35} & g_{36} \\
    0 & g_{42} & g_{43} & g_{44} & g_{45} & g_{46} \\
    2 & g_{52} & -\gamma^2+12 & g_{54} & -\gamma^4+168 & g_{56} \\
    2 & g_{62} & -\gamma^2+12 & g_{64} & -\gamma^4+168 & g_{66}
    \end{pmatrix}
\end{equation}

$$g_{32}=-4{\gamma}^{5}+54{\gamma}^{3}-33\gamma,\quad g_{33}=-120{\gamma}^{4}+1497{\gamma}^{2}+732,\quad g_{34}=66{\gamma}^{5}-768{\gamma}^{3}-1116\gamma,
$$ $$g_{35}=-1497{\gamma}^{4}+18672{\gamma}^{2}+9144,\quad 
g_{36}=801{\gamma}^{5}-9276{\gamma}^{3}-14148\gamma,$$ $$g_{42}=4{\gamma}^{5}-54{\gamma}^{3}+33\gamma,\quad g_{43}=-120{\gamma}^{4}+1497{\gamma}^{2}+732,\quad g_{44}=-66{\gamma}^{5}+768{\gamma}^{3}+1116\gamma,$$ $$g_{45}=-1497{\gamma}^{4}+18672{\gamma}^{2}+9144,\quad 
g_{46}=-801{\gamma}^{5}+9276{\gamma}^{3}+14148\gamma,$$ $$g_{52}=-4{\gamma}^{5}+50{\gamma}^{3}+23\gamma,\quad g_{54}=-54{\gamma}^{5}+674{\gamma}^{3}+324\gamma,$$ $$ g_{56}=-697{\gamma}^{5}+8700{\gamma}^{3}+4176
\gamma,\quad g_{62}=4{\gamma}^{5}-50{\gamma}^{3}-23\gamma,$$ $$ g_{64}=54{\gamma}^{5}-674{\gamma}^{3}-324\gamma,\quad g_{66}=697{\gamma}^{5}-8700{\gamma}^{3}-4176.$$

\subsection*{Product of generators}

\subsubsection*{$f(x)=x^5+15x^2+5$}

\begin{equation}\label{coordproductfirstpolyn5}
    \beta'=\left(\begin {array}{c} {\epsilon_1}\,{\delta_1}+{\frac {1716415065\,{\epsilon_2}\,{\delta_1}}{642386524}}+{\frac {344708445\,{\epsilon_3}\,{\delta_1}}{24707174}}+{\frac {13637321545\,{\epsilon_4}\,{\delta_1}}{642386524}}+150\,{\epsilon_5}\,{\delta_1}\\ \noalign{\medskip}{\frac {114902815\,{\epsilon_1}\,{\delta_2}}{98828696\,t}}+{\frac {13637321545\,{\epsilon_2}\,{\delta_2}}{7708638288\,t}}+{\frac{25}{2}}\,{\frac {{\epsilon_3}\,{\delta_2}}{t}}+{\frac {3045443610\,{\epsilon_4}\,{\delta_2}}{160596631\,t}}+{\frac {33002652875\,{\epsilon_5}\,{\delta_2}}{296486088\,t}}\\ \noalign{\medskip}-{\frac {1236455183\,{\epsilon_2}\,{\delta_1}}{5139092192}}-{\frac {3740722775\,{\epsilon_3}\,{\delta_1}}{3557833056}}-{\frac {12064716635\,{\epsilon_4}\,{\delta_1}}{30834553152}}-{\frac {805\,{\epsilon_5}\,{\delta_1}}{48}}\\ \noalign{\medskip}-{\frac {3740722775\,{\epsilon_1}\,{\delta_2}}{42693996672\,t}}-{\frac {12064716635\,{\epsilon_2}\,{\delta_2}}{370014637824\,t}}-{\frac {805\,{\epsilon_3}\,{\delta_2}}{576\,t}}+{\frac {153714326975\,{\epsilon_4}\,{\delta_2}}{1110043913472\,t}}-{\frac {25112657245\,{\epsilon_5}\,{\delta_2}}{1778916528\,t}}\\ \noalign{\medskip}{\frac {429885670895\,{\epsilon_2}\,{\delta_1}}{6660263480832}}+{\frac {15317808053\,{\epsilon_3}\,{\delta_1}}{170775986688}}+{\frac {1690569125905\,{\epsilon_4}\,{\delta_1}}{4440175653888}}+{\frac {18155\,{\epsilon_5}\,{\delta_1}}{10368}}\\ \noalign{\medskip}{\frac {15317808053\,{\epsilon_1}\,{\delta_2}}{2049311840256\,t}}+{\frac {1690569125905\,{\epsilon_2}\,{\delta_2}}{53282107846656\,t}}+{\frac {18155\,{\epsilon_3}\,{\delta_2}}{124416\,t}}+{\frac {2615627812535\,{\epsilon_4}\,{\delta_2}}{17760702615552\,t}}+{\frac {457036180625\,{\epsilon_5}\,{\delta_2}}{256163980032\,t}}\\ \noalign{\medskip}-{\frac {861830682163\,{\epsilon_2}\,{\delta_1}}{213128431386624}}-{\frac {5731464085\,{\epsilon_3}\,{\delta_1}}{1366207893504}}-{\frac {1203824325611\,{\epsilon_4}\,{\delta_1}}{35521405231104}}-{\frac {30685\,{\epsilon_5}\,{\delta_1}}{331776}}\\ \noalign{\medskip}-{\frac {5731464085\,{\epsilon_1}\,{\delta_2}}{16394494722048\,t}}-{\frac {1203824325611\,{\epsilon_2}\,{\delta_2}}{426256862773248\,t}}-{\frac {30685\,{\epsilon_3}\,{\delta_2}}{3981312\,t}}-{\frac {5849774679485\,{\epsilon_4}\,{\delta_2}}{284171241848832\,t}}-{\frac {3525523930915\,{\epsilon_5}\,{\delta_2}}{32788989444096\,t}}\\ \noalign{\medskip}{\frac {28089103\,{\epsilon_2}\,{\delta_1}}{213128431386624}}+{\frac {186769\,{\epsilon_3}\,{\delta_1}}{1366207893504}}+{\frac {39241535\,{\epsilon_4}\,{\delta_1}}{35521405231104}}+{\frac {{\epsilon_5}\,{\delta_1}}{331776}}\\ \noalign{\medskip}{\frac {186769\,{\epsilon_1}\,{\delta_2}}{16394494722048\,t}}+{\frac {39241535\,{\epsilon_2}\,{\delta_2}}{426256862773248\,t}}+{\frac {{\epsilon_3}\,{\delta_2}}{3981312\,t}}+{\frac {190712785\,{\epsilon_4}\,{\delta_2}}{284171241848832\,t}}+{\frac {114902815\,{\epsilon_5}\,{\delta_2}}{32788989444096\,t}}\end {array}\right)
\end{equation}

\subsubsection*{$f(x)=x^5+10x^2+5$}

\begin{equation}\label{coordproductsecondpolyn5}
    \beta'=\left(\begin {array}{c} {\epsilon_1}\,{\delta_1}-{\frac {4066158490\,{\epsilon_2}\,{\delta_1}}{954111429}}+{\frac {11621900\,{\epsilon_3}\,{\delta_1}}{2321439}}+{\frac {32953801295\,{\epsilon_4}\,{\delta_1}}{954111429}}-200\,{\epsilon_5}\,{\delta_1}\\ \noalign{\medskip}{\frac {2905475\,{\epsilon_1}\,{\delta_2}}{4642878\,t}}+{\frac {32953801295\,{\epsilon_2}\,{\delta_2}}{7632891432\,t}}-25\,{\frac {{\epsilon_3}\,{\delta_2}}{t}}+{\frac {87851681855\,{\epsilon_4}\,{\delta_2}}{1908222858\,t}}+{\frac {2423056375\,{\epsilon_5}\,{\delta_2}}{18571512\,t}}\\ \noalign{\medskip}-{\frac {2565694281\,{\epsilon_2}\,{\delta_1}}{3392396192}}+{\frac {949487275\,{\epsilon_3}\,{\delta_1}}{297144192}}-{\frac {61041394225\,{\epsilon_4}\,{\delta_1}}{20354377152}}-{\frac {445\,{\epsilon_5}\,{\delta_1}}{16}}\\ \noalign{\medskip}{\frac {949487275\,{\epsilon_1}\,{\delta_2}}{2377153536\,t}}-{\frac {61041394225\,{\epsilon_2}\,{\delta_2}}{162835017216\,t}}-{\frac {445\,{\epsilon_3}\,{\delta_2}}{128\,t}}+{\frac {6092208052225\,{\epsilon_4}\,{\delta_2}}{325670034432\,t}}-{\frac {18542187595\,{\epsilon_5}\,{\delta_2}}{594288384\,t}}\\ \noalign{\medskip}{\frac {760219986455\,{\epsilon_2}\,{\delta_1}}{15632161652736}}-{\frac {546579773\,{\epsilon_3}\,{\delta_1}}{3169538048}}+{\frac {223522035595\,{\epsilon_4}\,{\delta_1}}{1954020206592}}+{\frac {28825\,{\epsilon_5}\,{\delta_1}}{16384}}\\ \noalign{\medskip}-{\frac {546579773\,{\epsilon_1}\,{\delta_2}}{25356304384\,t}}+{\frac {223522035595\,{\epsilon_2}\,{\delta_2}}{15632161652736\,t}}+{\frac {28825\,{\epsilon_3}\,{\delta_2}}{131072\,t}}-{\frac {66180799869005\,{\epsilon_4}\,{\delta_2}}{62528646610944\,t}}+{\frac {36935998025\,{\epsilon_5}\,{\delta_2}}{25356304384\,t}}\\ \noalign{\medskip}-{\frac {21048029533\,{\epsilon_2}\,{\delta_1}}{13895254802432}}+{\frac {338220965\,{\epsilon_3}\,{\delta_1}}{76068913152}}-{\frac {11879343659\,{\epsilon_4}\,{\delta_1}}{5210720550912}}-{\frac {7115\,{\epsilon_5}\,{\delta_1}}{131072}}\\ \noalign{\medskip}{\frac {338220965\,{\epsilon_1}\,{\delta_2}}{608551305216\,t}}-{\frac {11879343659\,{\epsilon_2}\,{\delta_2}}{41685764407296\,t}}-{\frac {7115\,{\epsilon_3}\,{\delta_2}}{1048576\,t}}+{\frac {4822141269325\,{\epsilon_4}\,{\delta_2}}{166743057629184\,t}}-{\frac {20676257105\,{\epsilon_5}\,{\delta_2}}{608551305216\,t}}\\ \noalign{\medskip}{\frac {26624495\,{\epsilon_2}\,{\delta_1}}{1000458345775104}}-{\frac {47531\,{\epsilon_3}\,{\delta_1}}{608551305216}}+{\frac {5007295\,{\epsilon_4}\,{\delta_1}}{125057293221888}}+{\frac {{\epsilon_5}\,{\delta_1}}{1048576}}\\ \noalign{\medskip}-{\frac {47531\,{\epsilon_1}\,{\delta_2}}{4868410441728\,t}}+{\frac {5007295\,{\epsilon_2}\,{\delta_2}}{1000458345775104\,t}}+{\frac {{\epsilon_3}\,{\delta_2}}{8388608\,t}}-{\frac {2033079245\,{\epsilon_4}\,{\delta_2}}{4001833383100416\,t}}+{\frac {2905475\,{\epsilon_5}\,{\delta_2}}{4868410441728\,t}}\end {array}\right)
\end{equation}

\end{document}